\documentclass{article}
\usepackage[usenames,dvipsnames]{color}
\usepackage{amsmath,amssymb,amsfonts,amsthm,mathrsfs}
\usepackage{bm}
\usepackage{amsthm}
\usepackage{indentfirst}
\usepackage{tikz}
\usepackage{booktabs}
\usepackage{graphicx}
\usepackage{float}
\numberwithin{equation}{section}
\usepackage[right=4.5cm]{geometry}
\usepackage{marginnote}
\usepackage{esint} 

\newtheorem{theorem}{Theorem}
\newtheorem{lemma}{Lemma}

\newtheorem{corollary}{Corollary}
\newtheorem{definition}{Definition}
\newtheorem{remark}{Remark}

\usepackage{hyperref}

\newcommand{\mnote}[1]{%
  \ifmmode\text{\marginnote{\footnotesize\textcolor{blue}{#1}}}%
  \else\marginnote{\footnotesize\textcolor{blue}{#1}}%
  \fi
}

\newcommand{\relu}{\mbox{ReLU}}

\def\NN{\mathbb{N}}
\def\RR{\mathbb{R}}
\def\BB{\mathbb{B}}

\def\SS{\mathbb{S}}

\def\mal{\max\limits}
\def\mil{\min\limits}
\def\sul{\sum\limits}

\def\rr{ r}
\def\ss{ s}
\def\pc{\mathcal{P}^c}
\def\mq{\mathfrak{q}}

\def\mP{\mathcal{P}}

\def\mH{\mathcal{H}^\rr(\SS^d)}

\title{Optimal Neural Network Approximation via Empirical Least Squares with Deterministic Samples}
\author{ Xinliang Liu \thanks{School of Mathematical Sciences, Ocean University of China, Qingdao 266100, China}   \and
  Tong Mao \thanks{Shenzhen Loop Area Institute, Shenzhen 518000, P. R. China} \and
  Jinchao Xu\footnotemark[2]
}

\date{}

\begin{document}
	
\maketitle

\begin{abstract}
We develop a rigorous theory of discrete residual least-squares approximation for elliptic spectral equations $\mathfrak L_\beta u=f$ using linearized ReLU$^k$ neural networks on the sphere, where $\mathfrak L_\beta$ is a positive elliptic spectral multiplier of order $\beta$. Given a parameter set $\Theta_n=\{\theta_{j}^*\}_{j=1}^n\subset\mathbb S^d$, we approximate $u$ in the linearized network space $L_n^k(\Theta_n)$ by the discrete residual on the collocation points $\{\eta_i^*\}_{i=1}^m$
\begin{equation*}
u_{n,m}\in\arg\min_{v_n\in L_n^k(\Theta_n)}\frac1m\sum_{i=1}^m\left(f(\eta_i^*)-\mathfrak L_\beta v_n(\eta_i^*)\right)^2.
\end{equation*}
With $k>\frac{d-1}{2}+\beta$, for antipodally quasi-uniform network parameter sets and any quasi-uniform collocation points with $m\gtrsim n$, we prove that
\begin{equation*}
\|u-u_{n,m}\|_{\mathcal H^{\beta}(\mathbb S^d)}\eqsim\|f-\mathfrak L_\beta u_{n,m}\|_{\mathcal L^2(\mathbb S^d)}\lesssim n^{-\frac{r}{d}}
\begin{cases}
\|f\|_{\mathcal W^{r,p}(\mathbb S^d)},&\frac{d}{p}<r\leq \frac{d}{2},~p>2,\\
\|f\|_{\mathcal H^r(\mathbb S^d)},&r>\frac{d}{2}.
\end{cases}
\end{equation*}
We also establish a high-probability residual estimate, up to a logarithmic factor and an arbitrarily small smoothness loss, for i.i.d.\ uniformly
distributed collocation points.

The key analytical ingredient is a Bernstein inequality for linearized ReLU$^k$ network spaces. If $\underline h$ denotes the antipodal separation distance of the network parameters, then
\begin{equation*}
\|v_n\|_{\mathcal H^r(\mathbb S^d)}\lesssim\underline h^{-(r-s)}\|v_n\|_{\mathcal H^s(\mathbb S^d)},\qquad 0\leq s<r<k+\tfrac12.
\end{equation*}
%This also yields a corresponding Bernstein inequality for the residual space $\mathfrak L_\beta L_n^k$ as well as an inverse approximation theorem for the underlying network spaces.

Finally, using the positive homogeneity of ReLU$^k$ together with a constructive Sobolev extension, we obtain an affine-network interpretation on bounded Lipschitz domains in the order-zero case $\beta=0$. Numerical experiments in this setting illustrate the convergence behavior predicted by the approximation theory.
\end{abstract}

\section{Introduction}

%Deep neural networks (DNNs) form the mathematical foundation of numerous breakthroughs in artificial intelligence, serving as a powerful tool for modeling complex functions \cite{LeCun2015}. The overall performance of deep learning models hinges on several essential aspects, including expressive power, generalization behavior, computational efficiency, and robustness against perturbations.

From a mathematical perspective, the theoretical understanding of deep neural networks begins with the analysis of their fundamental components---the shallow networks that form each layer. 
Accordingly, we introduce the following function class of shallow neural networks, which serves as a basic model for the study of representation and approximation properties:
\begin{equation}\label{shallow}
    \Sigma_{n}^\sigma = \left\{
        x\mapsto\sum_{j=1}^n a_j\, \sigma(w_j \cdot x + b_j) :~ w_j \in \mathbb{R}^{d},~ a_j, b_j \in \mathbb{R}
    \right\}.
\end{equation}
The approximation rates of shallow neural networks have been extensively studied for several classes of target functions, including Barron or variation spaces, spectral Barron spaces \cite{klusowski2018approximation,siegel2022high}, and Sobolev spaces. For Barron-type classes, the dimension-independent rate $\mathcal O(n^{-1/2})$ is typically derived through probabilistic or greedy approximation arguments originating from Maurey's sampling method \cite{maurey1973type,pisier1981remarques}; see also \cite{Jones1992,barron1993universal,barron1994approximation,devore1996some,makovoz1998uniform,kurkova1,kurkova2,lewicki2004approximation,temlyakov2008greedy,Barron2008,E2019population,E2019barron,E2020representation}. For further approximation results in Barron spaces, we refer to \cite{makovoz1996random,bach2017breaking,klusowski2018approximation,xu2020finite,siegel2022sharp,siegel2022optimal,mhaskar2023tractability}, and for Sobolev spaces, where the approximation rate depends on the smoothness and dimension, to \cite{mhaskar1993approximation,petrushev1998approximation,pinkus1999approximation}. These continuous approximation results provide the benchmark for determining whether the same rates can be recovered from finitely many samples by the discrete least-squares methods.

In this work, we focus on the rectified linear unit (ReLU) activation $\relu(x)=\max\{0,x\}$ and its higher-order variants $\sigma_k(x)=(\relu(x))^k$ for $k\in\mathbb N$. Owing to the positive homogeneity of $\sigma_k$, we work directly with the normalized spherical model, in which the parameters $\displaystyle\theta=\binom{w}{b}$ lie on the sphere
$$
\SS^d=\{x\in\RR^{d+1}:~|x|=1\}.
$$
Beyond providing a natural normalization of the parameter space, the sphere also serves as the approximation domain in our analysis, where the basic ridge functions take the form $\sigma_k(\theta\cdot\eta)$ with $\theta,\eta\in\SS^d$.

Spherical domains arise naturally when the data or the underlying symmetries possess an intrinsic directional or rotational structure. For instance, spherical convolutional networks and graph-based spherical networks have been developed for signals on spheres \cite{cohen2018spherical,jiang2019spherical,defferrard2019deepsphere}, while approximation and generalization properties of neural networks on spherical domains have been studied in \cite{fang2020theory,feng2023generalization,yang2026spherical}. This provides both a natural mathematical framework for the normalized ReLU$^k$ model and a relevant setting in its own right.

For a fixed network size $n$, we describe the geometry of the parameter set $\Theta_n=\{\theta_{j,n}^*\}_{j=1}^n\subset\mathbb S^d$ through two quantities. Let $\rho$ denote the geodesic distance on $\mathbb S^d$, the fill distance
\[
h=\max_{\eta\in\mathbb S^d}\min_{1\le j\le n}\rho(\eta,\theta_{j,n}^*)
\]
measures how well the parameters cover the sphere, while the antipodal separation distance
\[
\underline h
=
\min\left\{
\min_{i\neq j}\rho(\theta_{i,n}^*,\theta_{j,n}^*),
\min_{i\neq j}\rho(-\theta_{i,n}^*,\theta_{j,n}^*)
\right\}
\]
quantifies the separation between the parameters and their antipodal points. %These two geometric quantities play different roles in the analysis: $h$ determines the approximation capability of the network space, whereas $\underline h$ governs the stability estimates developed later.

In particular, our recent work \cite{liu2025integral} showed that, for a target function of Sobolev smoothness $r$ satisfying the parity condition
\begin{equation}\label{eqn:f_evenodd_relu}
f(\eta)=(-1)^{k+1}f(-\eta),\qquad\eta\in\SS^d,
\end{equation}
one can construct an element of
\begin{equation}\label{shallowk}
L_n^k=L_n^k(\Theta_n)
=
\left\{
g\in\mathcal L^2(\SS^d):
g(\eta)=\sum_{j=1}^n a_j\sigma_k(\theta_{j,n}^*\cdot\eta),
\ a_j\in\mathbb R
\right\}
\end{equation}
whose $\mathcal L^2(\SS^d)$-approximation error is of order $\mathcal O(h^r)$.

A fundamental principle in numerical analysis asserts that the total error of a variational or learning formulation consists of an \emph{approximation error} and a \emph{quadrature (integration) error} \cite{strang1972variational,brenner2008mathematical}. 
While the quadrature error is usually negligible in classical finite element methods, recent studies have revealed that it may become dominant in neural-network-based computations, particularly for ReLU$^k$ and other non-smooth activations. 
Numerical experiments indicate that fixed high-order quadrature or uniformly sampled losses often lead to noticeable bias and instability, and achieving high accuracy requires substantially increased sampling density \cite{rivera2022quadrature,liao2023deep,magueresse2023adaptive,taylor2025stochastic}. 
This behavior agrees with classical quadrature theory: spherical ReLU$^k$ ridge functions belong to $\mathcal H^r(\mathbb S^d)$ only for $r<k+\tfrac12$, while applying an operator of order $\beta$ reduces the residual regularity threshold to $r<k+\tfrac12-\beta$. This limited residual regularity can in turn reduce the accuracy of quadrature applied to the residual loss, consistently with the deterioration observed for finitely smooth integrands in classical quadrature theory \cite{gauss1814methodus,davis1984methods}. 
To address this difficulty, several approaches have been proposed, including adaptive quadrature schemes \cite{magueresse2023adaptive}, unbiased stochastic or Monte Carlo integration \cite{rivera2022quadrature,taylor2025stochastic}, polynomial or piecewise-linear surrogate models \cite{rivera2022quadrature}, and model-aware integration strategies such as the tensor neural network (TNN) method \cite{wang2023tnn}. 

Among these strategies, the \emph{collocation method}---also referred to as \emph{discrete least-squares regression}---provides a simple way to bypass the numerical integration step. The theoretical foundations of discrete least-squares approximation have been extensively developed in the context of polynomial and parametric problems \cite{cohen2017optimal,chkifa2015discrete}, where optimal sample complexity and weighted designs are well understood. In the neural network setting, collocation has been widely employed in physics-informed neural networks (PINNs)~\cite{raissi2019physics}, random feature methods~\cite{chen2022bridging}, and extreme learning machines~\cite{HZS:2006,wang2024extreme}.

Random-design regression and randomized feature methods provide complementary
statistical context. Hsu, Kakade, and Zhang analyze random-design ordinary
and ridge regression \cite{hsu2014random}; Rudi and Rosasco establish
generalization guarantees for ridge regression with random features
\cite{rudi2017generalization}; and El Alaoui and Mahoney analyze
Nystr\"om-type kernel ridge regression using ridge leverage scores
\cite{alaoui2015fast}. Matrix concentration inequalities such as those of
Tropp \cite{tropp2012user} are a standard tool in such random Gram-matrix
analyses. These results concern statistical prediction or regularized kernel
approximation and do not imply the deterministic, unregularized spherical
sampling theorem proved here.

Let $\mathfrak L_\beta$ be a positive elliptic spectral multiplier of order $\beta\geq0$ on $\mathbb S^d$, as defined precisely in Section~\ref{sec_prelimi}, and assume that $\mathfrak L_0=I$. We consider
\begin{equation}\label{eqn:model_equation}
\mathfrak L_\beta u=f.
\end{equation}
Given a fixed network parameter set $\Theta_n=\{\theta_{j,n}^*\}_{j=1}^n$, the continuous residual least-squares approximation is
\begin{equation}
u_n^{\mathrm{cont}}\in\arg\min_{v_n\in L_n^k(\Theta_n)}\|f-\mathfrak L_\beta v_n\|_{\mathcal L^2(\mathbb S^d)}^2.
\end{equation}
We replace this continuous problem by selecting collocation points $\{\eta_i^*\}_{i=1}^m=\{\eta_{i,m}^*\}_{i=1}^{m}$ and computing
\begin{equation}\label{eqn_def_collocation}
u_{n,m}\in\arg\min_{v_n\in L_n^k(\Theta_n)}\frac1m\sum_{i=1}^m\left(f(\eta_i^*)-\mathfrak L_\beta v_n(\eta_i^*)\right)^2.
\end{equation}
Since the hidden parameters are fixed, \eqref{eqn_def_collocation} is a linear least-squares problem in the output coefficients. When $\beta=0$, it reduces to the ordinary discrete least-squares approximation problem.

Our main result shows that \eqref{eqn_def_collocation} achieves the optimal approximation rate using only $m\asymp n$ deterministic samples. More precisely, suppose that $k>\beta+\frac{d-1}{2}$, $r+\beta\leq\frac{d+2k+1}{2}$, $0\leq s<\min\{r,k+\frac12-\beta\}$, and $f$ satisfies the parity condition \eqref{eqn:f_evenodd_relu}. Then, for any $m\geq C\underline h^{-d}$ and quasi-uniform collocation points $\{\eta_i^*\}_{i=1}^m$, 
\begin{equation}\label{eqn:main_collocation_intro}
\|f-\mathfrak L_\beta u_{n,m}\|_{\mathcal H^s(\mathbb S^d)}
\lesssim\underline h^{-s}h^r
\begin{cases}
\|f\|_{\mathcal W^{r,p}(\mathbb S^d)},&\frac{d}{p}<r\leq \frac{d}{2},~p>2,\\
\|f\|_{\mathcal H^r(\mathbb S^d)},&\frac d2<r\leq\frac{d+2k+1}{2}-\beta.
\end{cases}
\end{equation}
In particular, for an antipodally quasi-uniform parameter sequence, $h\eqsim\underline h\eqsim n^{-1/d}$, so that $m\asymp n$ is sufficient and
\begin{equation*}
\|f-\mathfrak L_\beta u_{n,m}\|_{\mathcal H^s(\mathbb S^d)}
\lesssim n^{-\frac{r-s}{d}}
\begin{cases}
\|f\|_{\mathcal W^{r,p}(\mathbb S^d)},&\frac{d}{p}<r\leq \frac{d}{2},~p>2,\\
\|f\|_{\mathcal H^r(\mathbb S^d)},&r>\frac d2.
\end{cases}
\end{equation*}
Thus the deterministic collocation method achieves the optimal rate with a sample size proportional to the number of trainable output coefficients. By the elliptic norm equivalence for $\mathfrak L_\beta$ established in Section~\ref{sec_prelimi}, the corresponding solution error satisfies
\begin{equation*}
\|u-u_{n,m}\|_{\mathcal H^{s+\beta}(\mathbb S^d)}
\lesssim\|f-\mathfrak L_\beta u_{n,m}\|_{\mathcal H^s(\mathbb S^d)}.
\end{equation*}
For $\beta=0$, these estimates recover the ordinary discrete least-squares approximation results.

The stability analysis underlying \eqref{eqn:main_collocation_intro} is based on a Bernstein inequality for $L_n^k(\Theta_n)$ and the resulting inverse estimate for the residual space $\mathfrak L_\beta L_n^k(\Theta_n)$. We also establish a near-optimal high-probability residual estimate when the collocation points are sampled independently from the uniform distribution on $\mathbb S^d$. In addition, the Bernstein inequality yields an inverse approximation theorem for the underlying ReLU$^k$ trial spaces.

The equal-weight formulation in \eqref{eqn_def_collocation} differs from cubature-based discretization, where weighted sampling rules are designed to preserve polynomial exactness. Although such weights are available on the sphere, equal-weight exactness generally requires additional structure on the nodes \cite{mhaskar2001spherical,filbirMhaskar2011mz,leGiaMhaskar2009localized,bondarenko2013optimal}. In our setting, the loss integrand $|f-\mathfrak L_\beta v_n|^2$ is generally not a spherical polynomial, so existing cubature-based analyses do not directly apply. To the best of our knowledge, approximation results of the form \eqref{eqn:main_collocation_intro} have not been established for linearized neural network spaces, even under weighted or equal-weight least-squares discretizations.

We further extend the $\beta=0$ case to bounded Lipschitz domains $\Omega\subset\mathbb R^d$. A function $f$ on $\Omega$ is lifted to a spherical cap, extended to the whole sphere with the required parity, and then treated by the spherical least-squares theory. Since every step of the lifting procedure is constructive, the spherical training data and the resulting affine ReLU$^k$ approximant can be explicitly computed from the geometry of $\Omega$ and the pointwise values of $f$. Numerical examples in Section~\ref{sec:numerical_experiments} further illustrate that the deterministic discrete least-squares approximation exhibits the convergence behavior predicted by the spherical theory.

The key analytical ingredient behind the estimate above, and a result of independent interest, is a \emph{Bernstein inequality} for shallow ReLU\(^{k}\) network functions on the sphere. 
Specifically, we show that for $r>s$, any network $g$ of the form \eqref{shallowk} satisfies
\begin{equation}\label{eqn_inverse}
    \|g\|_{\mathcal{H}^r(\mathbb{S}^d)}
    \lesssim \underline{h}^{\,-(r-s)}\,
    \|g\|_{\mathcal{H}^s(\mathbb{S}^d)}.
\end{equation}

For the PDE residual analysis, the relevant space is not $L_n^k(\Theta_n)$ itself but $V_{n,\beta}=\mathfrak L_\beta L_n^k(\Theta_n)$. The elliptic norm equivalence and \eqref{eqn_inverse} immediately imply that, for every $g\in V_{n,\beta}$,
\begin{equation}\label{eqn:residual_bernstein_intro}
\|g\|_{\mathcal H^r(\mathbb S^d)}\lesssim\underline h^{-(r-s)}\|g\|_{\mathcal H^s(\mathbb S^d)},\qquad0\leq s<r<k+\tfrac12-\beta.
\end{equation}
This residual Bernstein inequality is the stability estimate used in the discrete residual least-squares analysis.

Bernstein-type inequalities control higher-order Sobolev norms by lower-order ones in finite-dimensional approximation spaces and are fundamental in approximation theory and numerical analysis \cite{bernstein1912quelques,nikolskii1975approximation,georgoulis2008inverse}. For suitable $n$-dimensional spaces $V_n\subset\mathcal H^r(\Omega)$, the classical form is
\begin{equation*}
    \|f\|_{\mathcal H^r(\Omega)}\lesssim n^{\frac{r-s}{d}}\|f\|_{\mathcal H^s(\Omega)}.
\end{equation*}
Such estimates underlie finite element, scattered data, radial basis function, kernel, and inverse approximation theory, including their manifold and spherical counterparts; related discrete norm control on spheres is provided by Marcinkiewicz--Zygmund inequalities \cite{arcangeli2014sampling,arcangeli2007extension,devore1989optimal,mhaskar2020kernel,narcowich2005sobolev,narcowich2006sobolev,hangelbroek2018inverse,marzo2007marcinkiewicz,mhaskar2001spherical}.

Bernstein-type inequalities for neural network spaces have recently attracted increasing attention, mainly in connection with variation spaces, approximation classes, and generalization \cite{siegel2022sharp,siegel2023characterization,guo2020optimal,E2019,bernstein2020distance,huang2023generalization}. The estimate \eqref{eqn_inverse} gives a sharp Bernstein inequality for linearized ReLU$^k$ networks on $\mathbb S^d$ that explicitly reflects the geometry of the network parameters and provides the stability estimate needed for our discrete least-squares analysis.

A further consequence of \eqref{eqn_inverse} is an inverse approximation theorem, which infers function smoothness from neural network approximation rates. Such converse results are classical in approximation theory and finite element analysis \cite{zhou2006approximation,maiorov2006pseudo}, and have recently been investigated for classifier models, one-dimensional ReLU approximation, and shallow ReLU networks through the Radon transform \cite{wang2023inverse,coroianu2024best,mao2024approximation}.

In the present spherical linearized setting, if
\begin{equation*}
    \inf_{v_n\in L_n^k(\Theta_n)}\|v-v_n\|_{\mathcal H^s(\mathbb S^d)}
    \lesssim n^{-\frac{r-s}{d}}
\end{equation*}
along an antipodally quasi-uniform parameter sequence, then $v\in\mathcal H^\alpha(\mathbb S^d)$ for every $\alpha<r$. Combined with the corresponding forward approximation results \cite{liu2025integral,siegel2022sharp,yang2024optimal,parhi2022near}, this yields a forward-and-inverse characterization up to an arbitrarily small loss of smoothness.

\section{Preliminaries}\label{sec_prelimi}

This section collects basic definitions and notation used throughout the paper. 
Most of the material follows the presentation in \cite{liu2025integral} and related references; it is included here for completeness and to keep the exposition self-contained.
Throughout, $d\eta$ denotes the normalized rotation-invariant surface measure on $\mathbb S^d$, so that $\int_{\mathbb S^d}1\,d\eta=1$.

\medskip

\subsection{Spherical harmonics}
We begin with standard notation for functions defined on the unit sphere
\[
    \mathbb S^d=\{x\in\mathbb R^{d+1}:|x|=1\},
\]
which serves as the primary domain for the analysis in this work. Let
$\mathbb P_m(\mathbb S^d)$ denote the space of spherical polynomials of
degree at most $m$, defined as the restriction to $\mathbb S^d$ of
polynomials in $\mathbb R^{d+1}$ of total degree at most $m$.

Let
\[
    \mathcal Y_m
    =
    \mathbb P_m(\mathbb S^d)
    \cap
    \mathbb P_{m-1}(\mathbb S^d)^\perp
\]
be the space of spherical harmonics of degree $m$, and let
\(\{Y_{m,\ell}\}_{\ell=1}^{N(m)}\) be an orthonormal basis of
$\mathcal Y_m$. The dimension of this space is
\begin{equation}
    N(m):=\dim(\mathcal Y_m)
    =\dim(\mathbb P_m(\mathbb S^d))
     -\dim(\mathbb P_{m-1}(\mathbb S^d))
    =
    \begin{cases}
        \displaystyle
        \frac{2m+d-1}{m}\binom{m+d-2}{d-1},
        &m\neq0,\\[0.8em]
        1,
        &m=0.
    \end{cases}
\end{equation}
Every $f \in \mathcal{L}^2(\mathbb{S}^d)$ admits a harmonic expansion
\begin{equation}
    f(\eta)
    = \sum_{m=0}^\infty \sum_{\ell=1}^{N(m)} 
      \widehat f(m,\ell) Y_{m,\ell}(\eta), 
      \qquad \text{a.e. } \eta \in \mathbb{S}^d,
\end{equation}
where
\(
  \widehat f(m,\ell) = 
  \langle f, Y_{m,\ell} \rangle_{\mathcal{L}^2(\mathbb{S}^d)},
  \ m \in \mathbb{N}, \ 1 \le \ell \le N(m).
\)

\medskip

\begin{definition}[Sobolev spaces on the sphere]\label{def:sobolev_sphere}
For $r > 0$ and $1 \le p \le \infty$, define the Sobolev semi-norm
\[
    |f|_{\mathcal{W}^{r,p}(\mathbb{S}^d)}
    = \|(-\Delta)^{\frac r2}f\|_{\mathcal{L}^p(\mathbb{S}^d)},
\]
where $\Delta$ is the Laplace-Beltrami operator, for which
\begin{equation}\label{eqn_def_Lap_Bel}
    (-\Delta)^{\frac r2}f=\sum_{m=0}^\infty (m(m+d-1))^{\frac{r}{2}}\Pi_mf,
\end{equation}
where $\Pi_m$ denotes the component of degree equal to $m$:
\begin{equation}
    \Pi_mf=\sul_{\ell=1}^{N(m)}\widehat f(m,\ell) Y_{m,\ell}.
\end{equation}

The corresponding Sobolev space is
\[
    \mathcal{W}^{r,p}(\mathbb{S}^d)
    = \{ f \in \mathcal{L}^p(\mathbb{S}^d) :
       \|f\|_{\mathcal{W}^{r,p}(\mathbb{S}^d)} < \infty \},
\]
with norm
\begin{equation}\label{eqn:sob_norm}
    \|f\|_{\mathcal{W}^{r,p}(\mathbb{S}^d)}
    = \|f\|_{\mathcal{L}^p(\mathbb{S}^d)}
      + |f|_{\mathcal{W}^{r,p}(\mathbb{S}^d)}.
\end{equation}
When $p=2$, we use the equivalent norm given by Parseval's theorem
\begin{equation}\label{eqn:Sob_norm_Parseval}
    |f|_{\mathcal{H}^r(\mathbb{S}^d)}^2
      = \sum_{m=0}^\infty \sum_{\ell=1}^{N(m)} m^{2r}\, \widehat f(m,\ell)^2,
    \qquad
    \|f\|_{\mathcal{H}^r(\mathbb{S}^d)}^2
      = \sum_{m=0}^\infty \sum_{\ell=1}^{N(m)} (1+m^{2r})\, \widehat f(m,\ell)^2.
\end{equation}
In this paper, we write $\mathcal{H}^r(\mathbb{S}^d) = \mathcal{W}^{r,2}(\mathbb{S}^d)$, as the two norms are equivalent. We also use the convention $\mathcal{W}^{0,p}(\mathbb{S}^d) = \mathcal{L}^{p}(\mathbb{S}^d)$ and $\mathcal{H}^0(\SS^d)=\mathcal{L}^{2}(\mathbb{S}^d)$.
\end{definition}

\medskip

\begin{definition}[Quasi-uniform and antipodally quasi-uniform point sets]\label{def:quasiuniform_sequence}
Let $d\in\mathbb{N}$. For a finite set
$\Theta_n=\{\theta_{j,n}^*\}_{j=1}^n\subset\mathbb{S}^d$, define
\begin{equation*}
    h_n:=\max_{\theta\in\mathbb{S}^d}\min_{1\le j\le n}\rho(\theta,\theta_{j,n}^*),
    \qquad
    \tilde{h}_n:=\min_{i\ne j}\rho(\theta_{i,n}^*,\theta_{j,n}^*),
\end{equation*}
and
\begin{equation*}
    \underline{h}_n:=
    \min\Big\{
      \min_{i\ne j}\rho(\theta_{i,n}^*,\theta_{j,n}^*),
      \min_{i\ne j}\rho(-\theta_{i,n}^*,\theta_{j,n}^*)
    \Big\}.
\end{equation*}
The set $\Theta_n$ is called quasi-uniform and antipodally quasi-uniform if there exists constants only dependent of $d$ such that
\begin{equation}\label{eqn:quasi-uniform}
    h_n\lesssim\tilde{h}_n,\qquad h_n\lesssim\underline{h}_n,
\end{equation}
respectively. A sequence of finite point sets
$\Theta=\{\Theta_n\}_{n\ge1}$, $\Theta_n=\{\theta_{j,n}^*\}_{j=1}^n$, is called quasi-uniform, respectively antipodally quasi-uniform, if the corresponding inequalities above hold with constants independent of $n$.
For a quasi-uniform sequence, $h_n\eqsim\tilde{h}_n\eqsim n^{-1/d}$; for an antipodally quasi-uniform sequence, $h_n\eqsim\underline{h}_n\eqsim n^{-1/d}$.
When a fixed $n$ is under consideration, we may drop the index $n$.
\end{definition}

\medskip

\subsection{Elliptic spectral multipliers}

For $\beta\geq0$ and $v\in\mathcal H^\beta(\mathbb S^d)$, we define the positive self-adjoint closed operator $\mathfrak L_\beta:\mathcal H^\beta(\mathbb S^d)\to\mathcal L^2(\mathbb S^d)$ by
\begin{equation}\label{eqn:def_operator}
\mathfrak L_\beta v=\sul_{m=0}^\infty\ell_\beta(m)\Pi_m v,
\end{equation}
where $\ell_0(m)=1$ and, for some constants $0<c_{\mathfrak L}\leq C_{\mathfrak L}<\infty$,
\begin{equation}\label{eqn:elliptic_symbol}
c_{\mathfrak L}(1+m)^\beta\leq\ell_\beta(m)\leq C_{\mathfrak L}(1+m)^\beta,\qquad m\geq0.
\end{equation}
Thus $\mathfrak L_0=I$. For every $r\geq0$, $\mathfrak L_\beta$ is an isomorphism from $\mathcal H^{r+\beta}(\mathbb S^d)$ onto $\mathcal H^r(\mathbb S^d)$, and
\begin{equation}\label{eqn:operator_equivalence}
\|\mathfrak L_\beta v\|_{\mathcal H^r(\mathbb S^d)}\eqsim\|v\|_{\mathcal H^{r+\beta}(\mathbb S^d)}.
\end{equation}
Moreover, $\mathfrak L_\beta$ commutes with every spherical harmonic multiplier, including the smooth cutoff operators introduced in Section~\ref{sec_bernstein}.

The model example is
\begin{equation*}
\mathfrak L_\beta=(I-\Delta)^{\beta/2},
\end{equation*}
where $\Delta$ is the Laplace-Beltrami operator in \eqref{eqn_def_Lap_Bel}.

More generally, if $s_\ell>0$, $\nu_\ell>0$, and $\beta=\max_{1\leq\ell\leq L}s_\ell$, then
\begin{equation*}
\mathfrak L_\beta=I+\sul_{\ell=1}^L\nu_\ell(-\Delta)^{s_\ell/2}
\end{equation*}
satisfies \eqref{eqn:elliptic_symbol}. Since $Y_{m,j}(-\eta)=(-1)^mY_{m,j}(\eta)$, the operators $\mathfrak L_\beta$ and $\mathfrak L_\beta^{-1}$ preserve parity. Consequently, if
\begin{equation*}
f(\eta)=(-1)^{k+1}f(-\eta),
\end{equation*}
then the solution $u=\mathfrak L_\beta^{-1}f$ satisfies the same identity.

If $k>\beta+\frac{d-1}{2}$, one can choose
\begin{equation*}
\frac d2<\tau<k+\frac12-\beta.
\end{equation*}
Every $v_n\in L_n^k$ then belongs to $\mathcal H^{\tau+\beta}(\mathbb S^d)$, and hence $\mathfrak L_\beta v_n\in\mathcal H^\tau(\mathbb S^d)\subset\mathcal C(\mathbb S^d)$. The pointwise residuals in \eqref{eqn_def_collocation} are therefore well defined under the assumptions of the main theorems.

\medskip

\subsection{Ultraspherical polynomials and the expansion of $\sigma_k$}

Let $\mathcal{L}^2_{w_d}([-1,1])$ be the weighted $L^2$ space equipped with the inner product
\[
    \langle f,g\rangle_{w_d}
    = \int_{-1}^1 f(t)g(t)(1-t^2)^{\frac{d-2}{2}}dt,
    \qquad
    \|f\|_{\mathcal{L}^2_{w_d}([-1,1])}
    = \langle f,f\rangle_{w_d}^{1/2}.
\]
The \emph{ultraspherical polynomials} $\{p_m\}_{m=0}^\infty$ is a standard orthogonal basis of this space
, which satisfy
\begin{equation}\label{eqn:def_ultraspherical}
    \langle p_m, p_\ell\rangle_{w_d} = 0,
    \qquad m \ne \ell, \quad m,\ell \in \mathbb{N},
\end{equation}
and can be expressed in the standard form (see \cite{szego1975orthogonal})
\begin{equation}\label{eqn:lambdam_pm}
    p_m(t)
    = \lambda_m(1-t^2)^{-\frac{d-2}{2}}
      \left(\frac{d}{dt}\right)^m
      \!\big[(1-t^2)^{m+\frac{d-2}{2}}\big],
      \qquad t \in [-1,1],
\end{equation}
for suitable normalization constants $\{\lambda_m\}_{m\ge0}$.

A useful identity (see \cite[Lemma~A.5.2]{dai2013approximation}) connects spherical and interval integrals by the surface area of $\SS^d$ and $\SS^{d-1}$:
\begin{equation}\label{eqn:int_sph_to_interval}
    \omega_{d-1}\int_{-1}^1 f(t)(1-t^2)^{\frac{d-2}{2}}dt
    = \omega_d \int_{\mathbb{S}^d} f(\theta\cdot \eta)\, d\eta,
    \qquad \theta \in \mathbb{S}^d.
\end{equation}

By \cite[Theorem~1.2.6]{dai2013approximation}, one further has the identity
\begin{equation}\label{eqn:sum_Y_nl}
    p_m(\eta\cdot \theta)
    = \sum_{\ell=1}^{N(m)} Y_{m,\ell}(\eta) Y_{m,\ell}(\theta),
\end{equation}
which relates ultraspherical polynomials to spherical harmonics.

The norms of $\{p_m\}$ follow from \eqref{eqn:int_sph_to_interval} and \eqref{eqn:sum_Y_nl}:
\begin{equation}\label{eqn_Pn_normalization}
    \|p_m\|_{\mathcal{L}^2_{w_d}([-1,1])}^2
    = \frac{\omega_d}{\omega_{d-1}} N(m).
\end{equation}
As \eqref{eqn_Pn_normalization} provides a suitable normalization, the explicit constants $\lambda_m$ are not needed in this paper. In fact, they can be recovered by comparing \eqref{eqn_Pn_normalization} with standard ultraspherical polynomial normalizations 
(see \cite[Chap.~4.3]{szego1975orthogonal}).

\medskip

The activation $\sigma_k \in \mathcal{L}^2_{w_d}([-1,1])$ admits the ultraspherical expansion
\begin{equation}\label{eqn:ultraspherical_expansion_ReLUk}
    \sigma_k = \sum_{m=0}^\infty \widehat{\sigma_k}(m)\, p_m,
\end{equation}
where
\[
    \widehat{\sigma_k}(m)
    = \frac{\langle p_m, \sigma_k\rangle_{w_d}}
           {\|p_m\|_{\mathcal{L}^2_{w_d}([-1,1])}^2}.
\]
Following \cite{schneider1967problem,bourgain2006projection,mhaskar2006weighted,bach2017breaking}, 
the nonzero coefficients form the index set
\[
    E_{\sigma_k} = 
    \{m \ge k+1 : m\equiv k+1\mod2\} \cup \{0,\ldots,k\},
\]
and for $m\geq k+1$, $m\equiv k+1\mod2$,
\begin{equation}\label{eqn:hat_sigma_large}
    \widehat{\sigma_k}(m)
    = \frac{\omega_{d-1}k!\Gamma(d/2)}{\omega_d}
      \frac{(-1)^{(m-k-1)/2}\Gamma(m-k)}
           {2^m\Gamma\!\left(\frac{m-k+1}{2}\right)
            \Gamma\!\left(\frac{m+d+k+1}{2}\right)}.
\end{equation}
Related Jacobi/ultraspherical expansions for ReLU-type zonal functions are also used in \cite{mhaskar2019reluZonal}.
The parity support and high-degree decay in \eqref{eqn:hat_sigma_large} provide the spectral input for the Bernstein argument developed in the next section.

\section{Bernstein inequality and inverse approximation theorem for ReLU$^k$ networks}\label{sec_bernstein}

For a fixed network size $n$, we write $L_n^k=L_n^k(\Theta_n)$, set $h=h_n$ and $\underline h=\underline h_n$ as in Definition~\ref{def:quasiuniform_sequence}, and suppress the subscript $n$ in $\theta_{j,n}^*$ when no confusion can arise. Thus,
\begin{equation}
h=\mal_{\eta\in\SS^d}\mil_{1\leq j\leq n}\rho(\eta,\theta_j^*),\qquad\underline h=\min\left\{\min_{i\neq j}\rho(\theta_i^*,\theta_j^*),\min_{i\neq j}\rho(-\theta_i^*,\theta_j^*)\right\}.
\end{equation}

This section presents the first main result: a Bernstein inequality for ReLU$^k$ network spaces and its inverse-approximation consequence. Unlike the classical Bernstein inequalities for polynomial and finite element spaces, the estimate compares Sobolev norms of different orders within $L_n^k$ while making the dependence on the geometry of the network parameters explicit. We first state and prove the Bernstein inequality through a high-frequency localization argument, and then derive the inverse-approximation consequence.

The following theorem gives the Bernstein inequality for network functions of the form \eqref{shallowk}. The threshold $r<k+\frac12$ is the necessary condition that $\sigma_k(\theta\cdot\circ)$ (and hence $f_n$) belongs to $\mathcal{H}^r(\SS^d)$.

\begin{theorem}\label{thm:inverse}
    Let $d,n\geq2$, $k\in\NN_0$, $0\leq s<r< k+\frac{1}{2}$, and let $\Theta_n=\{\theta_{j,n}^*\}_{j=1}^n\subset\SS^d$ be fixed with antipodal separation $\underline h>0$. With the simplified notation $\theta_j^*=\theta_{j,n}^*$, for any $f_n\in L_n^k(\Theta_n)$ the following Bernstein inequality holds:
    \begin{equation}
        \|f_n\|_{\mathcal{H}^r(\SS^d)}\lesssim \underline{h}^{-(r-s)}\|f_n\|_{\mathcal{H}^s(\SS^d)}.
    \end{equation}
    
    In particular, if $\Theta=\{\Theta_n\}_{n\ge1}$ is an antipodally quasi-uniform sequence,
    \begin{equation}
        \|f_n\|_{\mathcal{H}^r(\SS^d)}\lesssim n^{\frac{r-s}{d}}\|f_n\|_{\mathcal{H}^s(\SS^d)},
    \end{equation}
    where the implied constant is independent of $n$, $\Theta_n$, and $f_n$.

    Moreover, let $\beta\geq0$ and $\mathfrak L_\beta$ satisfy \eqref{eqn:def_operator}--\eqref{eqn:elliptic_symbol}. For $0\leq s<r< k+\frac{1}{2}-\beta$, and any $v_n\in\mathfrak{L}_\beta L_n^k(\Theta_n)$,
    \begin{equation}\label{eqn:residual_bernstein}
\|v_n\|_{\mathcal H^r(\SS^d)}\lesssim\underline h^{-(r-s)}\|v_n\|_{\mathcal H^s(\SS^d)}.
\end{equation}

\end{theorem}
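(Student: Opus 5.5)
The plan is to reduce the inequality to a statement about the coefficient vector $a=(a_j)$ of $f_n=\sum_{j=1}^n a_j\sigma_k(\theta_j^*\cdot\circ)$, split at a frequency $M\eqsim\underline h^{-1}$. By \eqref{eqn:ultraspherical_expansion_ReLUk} and \eqref{eqn:sum_Y_nl},
\[
\Pi_m f_n=\widehat{\sigma_k}(m)\sum_{j}a_j\,p_m(\theta_j^*\cdot\circ),\qquad
\|\Pi_m f_n\|_{\mathcal L^2}^2=\widehat{\sigma_k}(m)^2\sum_{i,j}a_ia_j\,p_m(\theta_i^*\cdot\theta_j^*).
\]
So every spectral piece of $f_n$ is a quadratic form in $a$ with a zonal kernel. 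Let $\chi$ be a smooth cutoff with $\chi=1$ on $[0,1]$ and $\chi=0$ on $[2,\infty)$, and split $f_n=\chi(\cdot/M)f_n+(1-\chi(\cdot/M))f_n$, where $\chi(\cdot/M)$ acts as a spherical harmonic multiplier. The low-frequency part is handled by the trivial polynomial Bernstein estimate:
\[
\|\chi(\cdot/M)f_n\|_{\mathcal H^r}\lesssim M^{r-s}\|f_n\|_{\mathcal H^s}.
\]
It therefore suffices to prove two coefficient bounds:
\[
\text{(upper)}\ \ \|(1-\chi(\cdot/M))f_n\|_{\mathcal H^r}^2\lesssim M^{2r-2k-1}|a|^2,
\qquad
\text{(lower)}\ \ \|f_n\|_{\mathcal H^s}^2\gtrsim M^{2s-2k-1}|a|^2 .
\]
Combining them gives $\|f_n\|_{\mathcal H^r}\lesssim M^{r-s}\|f_n\|_{\mathcal H^s}$, which is the claim with $M\eqsim\underline h^{-1}$.

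For both bounds I would write the quadratic forms through the kernel
\[
K_\psi(t)=\sum_{m\in E_{\sigma_k}}\psi(m)\,\widehat{\sigma_k}(m)^2\,m^{2r}\,p_m(t),
\]
with $\psi=1-\chi(\cdot/M)$ for the upper bound. For the lower bound I would take $\psi$ to be a bump $\phi(\cdot/M)$ supported in $[M,2M]$, with $m^{2s}$ in place of $m^{2r}$; since $0\le\phi\le1$, the band part is dominated by $\|f_n\|_{\mathcal H^s}^2$.

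The key observation concerns $\widehat{\sigma_k}$ for large $m$. Apart from finitely many low degrees, the support $E_{\sigma_k}$ consists of one parity class. By \eqref{eqn:hat_sigma_large} and Stirling's formula, $\widehat{\sigma_k}(m)^2$ equals $m^{-(d+2k+1)}$ times a smooth symbol in $m$ with an asymptotic expansion. The parity restriction can be written as $\tfrac12(1\pm(-1)^m)$. Using $p_m(-t)=(-1)^mp_m(t)$, this gives
\[
K_\psi(t)=\tfrac12\big(\tilde K(t)\pm\tilde K(-t)\big),
\]
where $\tilde K$ has a smooth symbol and no parity constraint. Standard localized (needlet-type) kernel estimates for smooth Fourier multipliers on the sphere then give, with $\rho=\arccos t$,
\[
|\tilde K(\cos\rho)|\lesssim M^{2r-2k-1}\,M^{0}\,(1+M\rho)^{-\ell}
\]
for every $\ell$, after normalizing by $N(m)\sim m^{d-1}$. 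Hence $K_\psi$ is concentrated near both $t=1$ and $t=-1$, which is exactly why the antipodal separation $\underline h$ enters. The diagonal term equals
\[
\sum_m\psi(m)\,\widehat{\sigma_k}(m)^2N(m)\,m^{2r}\eqsim\sum_{m\gtrsim M}m^{2r-2k-2}\eqsim M^{2r-2k-1},
\]
which converges precisely because $r<k+\tfrac12$. For the off-diagonal terms, a packing argument over the $\underline h$-separated set $\{\pm\theta_j^*\}$ gives
\[
\sum_{j\ne i}\big(1+M\rho(\pm\theta_i^*,\theta_j^*)\big)^{-\ell}\le c,
\]
with $c$ small once $M=C\underline h^{-1}$ and $C$ is large. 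Schur's test then shows the off-diagonal part is at most a small fraction of the diagonal. This proves both the upper bound and the lower bound, the latter because for $s\ge0$ the band kernel's diagonal is positive.

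The main obstacle is the localization step. I need uniform off-diagonal decay of $K_\psi$ despite the alternating sign and the parity support of $\widehat{\sigma_k}(m)$, and with the first $k+1$ degrees behaving differently. For those low degrees, the cutoff at $M\gg k$ removes them from the high-frequency and band kernels, so they never enter. The sign is harmless since only $\widehat{\sigma_k}^2$ appears. What requires care is verifying that $m\mapsto m^{d+2k+1}\widehat{\sigma_k}(m)^2$ satisfies symbol estimates of all orders uniformly, so that the summation-by-parts proof of kernel localization applies. I would check this from the Gamma-ratio asymptotics.

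Finally, for \eqref{eqn:residual_bernstein}, let $v_n=\mathfrak L_\beta g_n$ with $g_n\in L_n^k$. By \eqref{eqn:operator_equivalence} and the first part applied with $s+\beta<r+\beta<k+\tfrac12$,
\[
\|v_n\|_{\mathcal H^r}\eqsim\|g_n\|_{\mathcal H^{r+\beta}}\lesssim\underline h^{-(r-s)}\|g_n\|_{\mathcal H^{s+\beta}}\eqsim\underline h^{-(r-s)}\|v_n\|_{\mathcal H^s}.
\]
The quasi-uniform statement follows from $\underline h\eqsim n^{-1/d}$.
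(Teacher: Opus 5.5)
Your proposal is correct and follows essentially the paper's argument. The ingredients are the same: coefficient quadratic forms with kernels $p_m(\theta_i^*\cdot\theta_j^*)$, the smooth gap-free symbol $\xi_\alpha$ (the paper's $\phi_k$), Petrushev--Xu/needlet localization, antipodal packing with diagonal dominance, and a spectral-ratio (polynomial Bernstein) bound for low frequencies. The differences are only organizational: you symmetrize the kernel as $\tfrac12(\tilde K(t)\pm\tilde K(-t))$ instead of doubling the point set to $\{\pm\theta_j^*\}$, which lets the cutoff $M\gg k$ replace the paper's low-degree correction $p$. You also use one high-pass Schur bound and one band for the lower bound instead of diagonal dominance at every dyadic level; taking $M\geq 2^{\mathfrak q+1}$ yields the filtered estimate for $\mathcal P^c_{\mathfrak q}f_n$ as well.
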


The Bernstein inequality naturally yields an inverse approximation theorem, connecting the decay of approximation errors with the intrinsic smoothness of the target function. Such relationships are classical for polynomial and spline approximation; the following corollary gives the corresponding statement for ReLU$^k$ networks on the sphere.

\begin{corollary}[Inverse approximation theorem]
Let $d,k,s$ be as in Theorem~\ref{thm:inverse}, and let $v\in\mathcal H^s(\SS^d)$. If there exists some $r\in(s,k+1/2)$ such that, for some nested antipodally quasi-uniform sequence $\Theta=\{\Theta_n\}_{n\ge1}$ with $\Theta_n=\{\theta_{j,n}^*\}_{j=1}^n\subset\SS^d$, the following error estimate holds uniformly in $n$:
\begin{equation}\label{eqn_decay_inv_appr}
\inf\limits_{v_n\in L_n^k(\Theta_n)}\|v-v_n\|_{\mathcal H^s(\SS^d)}\lesssim n^{-\frac{r-s}{d}},
\end{equation}
then $v\in\mathcal H^\alpha(\SS^d)$ for all $\alpha<r$.
\end{corollary}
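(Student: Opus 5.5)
The plan is the classical Bernstein-to-inverse-theorem argument: telescope along a dyadic subsequence of network sizes, and use the Bernstein inequality of Theorem~\ref{thm:inverse} to turn $\mathcal H^s$-approximation rates into $\mathcal H^\alpha$-bounds. Fix $\alpha\in(s,r)$; for $\alpha\le s$ there is nothing to prove. For each $n$ choose a near-best approximant $v_n\in L_n^k(\Theta_n)$ with $\|v-v_n\|_{\mathcal H^s(\SS^d)}\lesssim n^{-(r-s)/d}$. Existence is harmless, since $L_n^k(\Theta_n)$ is finite dimensional and hence closed in $\mathcal H^s$, so a best approximation exists. Along $n_j=2^j$ we set $w_j=v_{n_{j+1}}-v_{n_j}$. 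By nestedness, $L_{n_j}^k(\Theta_{n_j})\subset L_{n_{j+1}}^k(\Theta_{n_{j+1}})$, so $w_j\in L_{n_{j+1}}^k(\Theta_{n_{j+1}})$. The triangle inequality gives $\|w_j\|_{\mathcal H^s}\lesssim n_j^{-(r-s)/d}$.

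Next we apply Theorem~\ref{thm:inverse}, in its antipodally quasi-uniform form, on $L_{n_{j+1}}^k$ with the pair $(s,\alpha)$. This is admissible because $s<\alpha<r<k+\frac12$. We obtain
\[
\|w_j\|_{\mathcal H^\alpha(\SS^d)}\lesssim n_{j+1}^{\frac{\alpha-s}{d}}\|w_j\|_{\mathcal H^s(\SS^d)}\lesssim 2^{-j\frac{r-\alpha}{d}},
\]
with constants independent of $j$. The series $v_{n_0}+\sum_{j\ge0}w_j$ therefore converges absolutely in $\mathcal H^\alpha(\SS^d)$. Here $v_{n_0}$ lies in $\mathcal H^\alpha$ because each $\sigma_k(\theta\cdot\circ)\in\mathcal H^\alpha$ for $\alpha<k+\frac12$. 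Call the limit $\tilde v$. The partial sums equal $v_{n_{J}}$, which converge to $v$ in $\mathcal H^s$ by the hypothesis \eqref{eqn_decay_inv_appr}. Since $\mathcal H^\alpha\hookrightarrow\mathcal H^s$ continuously, they also converge to $\tilde v$ in $\mathcal H^s$, so $v=\tilde v\in\mathcal H^\alpha(\SS^d)$.

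The one point needing care is the use of Theorem~\ref{thm:inverse} on the difference $w_j$. Its constant must be uniform over the whole sequence, which requires that $w_j$ lie in a single space $L_{n_{j+1}}^k$ with separation $\underline h_{n_{j+1}}\eqsim n_{j+1}^{-1/d}$. This is exactly where nestedness and antipodal quasi-uniformity are used. If the sequence were only defined for some $n$, or if nestedness held only along a subsequence, we would run the same argument along any geometric subsequence $n_{j+1}\le Cn_j$ on which nestedness holds. Note also that the estimate $\sum_j2^{-j(r-\alpha)/d}<\infty$ fails at $\alpha=r$, which explains the loss of smoothness in the statement.
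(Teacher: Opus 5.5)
Your proposal is correct and follows the paper's proof essentially step for step. By nestedness $v_{2^{j+1}}-v_{2^j}\in L^k_{2^{j+1}}(\Theta_{2^{j+1}})$, the Bernstein inequality of Theorem~\ref{thm:inverse} with the pair $(s,\alpha)$ gives the bound $2^{-j(r-\alpha)/d}$ in $\mathcal H^\alpha$, and the $\mathcal H^\alpha$-limit is identified with $v$ through the $\mathcal H^s$-convergence. Your extra details (existence of near-best approximants, $v_{n_0}\in\mathcal H^\alpha$, the trivial case $\alpha\le s$) only make explicit what the paper leaves implicit. Your closing aside on subsequences would also need lower geometric growth, $n_{j+1}\ge c\,n_j$ with $c>1$, for the series to be summable.
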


\begin{proof}
Fix an arbitrary nested antipodally quasi-uniform sequence $\Theta=\{\Theta_n\}_{n\ge1}$, where $\Theta_n=\{\theta_{j,n}^*\}_{j=1}^n$. Set $L_n^k=L_n^k(\Theta_n)$. As above, when applying the fixed-$n$ Bernstein inequality we suppress the subscript $n$ and write $\theta_j^*$ for $\theta_{j,n}^*$. Let $\{v_n\}_{n=1}^\infty$ be a sequence of functions satisfying
\begin{equation}\label{eqn_appr_fN}
\|v-v_n\|_{\mathcal H^s(\SS^d)}\lesssim n^{-\frac{r-s}{d}},\quad v_n\in L_n^k,\qquad n\in\NN.
\end{equation}
Since the parameter sequence is nested, $v_{2^j}-v_{2^{j+1}}\in L_{2^{j+1}}^k$. Then by Theorem~\ref{thm:inverse},
\begin{equation*}
\begin{split}
\|v_{2^j}-v_{2^{j+1}}\|_{\mathcal H^\alpha(\SS^d)}\lesssim&2^{j\frac{(\alpha-s)}{d}}\|v_{2^j}-v_{2^{j+1}}\|_{\mathcal H^s(\SS^d)}\\
\leq&\,2^{j\frac{(\alpha-s)}{d}}\big(\|v_{2^j}-v\|_{\mathcal H^s(\SS^d)}+\|v-v_{2^{j+1}}\|_{\mathcal H^s(\SS^d)}\big)\\
\lesssim&\,2^{j\frac{(\alpha-s)}{d}}(2^{-j\frac{(r-s)}{d}}+2^{-(j+1)\frac{(r-s)}{d}})\lesssim2^{-j\frac{r-\alpha}{d}}.
\end{split}
\end{equation*}
Hence, $\{v_{2^j}\}_{j=0}^\infty$ is a Cauchy sequence in $\mathcal H^\alpha(\SS^d)$ and converges to a limit $u\in\mathcal H^\alpha(\SS^d)$. Since $v$ is also the limit of $\{v_{2^j}\}_{j=0}^\infty$ in $\mathcal H^s(\SS^d)$, it follows that $v=u$, and consequently $v\in\mathcal H^\alpha(\SS^d)$.
\end{proof}

In the rest of this section, we prove Theorem~\ref{thm:inverse} by proving a stronger high-frequency estimate, stated below as Theorem~\ref{thm_inverse_hig_freq}. The full Bernstein inequality will be recovered by noticing that $\pc_{-1}f$ and $f$ only differs with a constant term.

\subsection{Localized kernels for the orthogonal expansion}
The proof relies on filtered spectral kernels associated with the spherical harmonic expansion. Applying a multiplier to the degree variable produces an integral kernel, and off-diagonal decay of this kernel is the key localization property used below.

Since the early work of L. Fejér \cite{fejer1903untersuchungen}, localized kernels of this type have been developed in several closely related settings. 
In the directions most relevant to the present proof, early systematic constructions and applications were given for trigonometric polynomials, Jacobi polynomials on the interval, and spherical polynomials in 
\cite{mhaskarPrestin2005local,mhaskar2004polynomialLocal,mhaskar2005sphereLocalized}. 
The framework was later extended to diffusion polynomial frames on quasi-metric measure spaces in 
\cite{maggioniMhaskar2008diffusion}, streamlined in the data/graph setting in 
\cite{mhaskar2018unified}, and applied to Hermite functions in 
\cite{mhaskar2017hermiteLocal}. 
Other constructions of localized polynomial kernels and frames, including the Jacobi-weight setting used below, were developed in 
\cite{petrushev2005localized,ivanov2010sub}. 

We use the following localized Jacobi-kernel estimate, obtained directly from \cite[Theorem~2.4]{petrushev2005localized} by specializing one Jacobi variable to an endpoint and adjusting the notation. For brevity, write $\|g\|_{w_d}:=\|g\|_{\mathcal L^2_{w_d}([-1,1])}$.

\begin{theorem}[Theorem~2.4, \cite{petrushev2005localized}]\label{thm_petrushev_xu_localized}
Fix $\gamma\in\NN$, and let $A\in\mathcal C^{\gamma+2d-1}(\RR)$ be supported in $[1/2,2]$. 
For $q\in\NN$, let
\begin{equation*}
    K_q(t):=\sum_{m=0}^\infty 
    A\left(\frac{m}{2^q}\right)
    \frac{p_m(1)p_m(t)}{\|p_m\|_{w_d}^2},
    \qquad -1\le t\le1.
\end{equation*}
Then
\begin{equation*}
    |K_q(t)|
    \lesssim
    \frac{2^{q\frac{d+1}{2}}\mal_{0\le\nu\le\gamma+2d-1}
    \|A^{(\nu)}\|_{\mathcal L^1([1/2,2])}}
    {(\sqrt{1+t}+2^{-q})^{\frac{d-1}{2}}
     (\sqrt{1-t}+2^{-q})^{\frac{d-1}{2}}
     (1+2^q\sqrt{1-t})^\gamma}.
\end{equation*}
The implied constant depends only on $d$ and $\gamma$.
\end{theorem}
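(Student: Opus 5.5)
We plan to obtain the estimate by specializing the Jacobi-kernel bound of Petrushev and Xu rather than by proving localization from scratch. Set $a:=\frac{d-2}{2}$. Then $w_d(t)=(1-t^2)^{a}=(1-t)^{a}(1+t)^{a}$, and the $p_m$ are multiples of the symmetric Jacobi polynomials $P_m^{(a,a)}$. Each summand $\frac{p_m(1)p_m(t)}{\|p_m\|_{w_d}^2}$ is invariant under rescaling $p_m$, so it equals $\frac{P_m^{(a,a)}(1)P_m^{(a,a)}(t)}{h_m}$ with $h_m=\|P_m^{(a,a)}\|_{w_d}^2$. Hence $K_q(t)=L_n(1,t)$ with $n=2^q$, where
\[
L_n(x,y)=\sum_{m\ge0}A\!\left(\tfrac mn\right)\frac{P_m^{(a,a)}(x)P_m^{(a,a)}(y)}{h_m}
\]
is exactly the filtered Jacobi kernel treated in \cite[Theorem~2.4]{petrushev2005localized}.

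That theorem states that, for $A$ smooth enough and supported in $[1/2,2]$,
\[
|L_n(x,y)|\lesssim \frac{n}{\sqrt{W(n;x)}\sqrt{W(n;y)}\,(1+n\,\rho(x,y))^{\gamma}},
\]
where $W(n;x)=(1-x+n^{-2})^{a+\frac12}(1+x+n^{-2})^{a+\frac12}$ and $\rho(x,y)=|\arccos x-\arccos y|$. We now put $x=1$ and simplify each factor.

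\emph{The $x=1$ weight.} Here $W(n;1)\eqsim n^{-(2a+1)}=n^{-(d-1)}$, so $n/\sqrt{W(n;1)}\eqsim n^{1+\frac{d-1}{2}}=2^{q\frac{d+1}{2}}$.

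\emph{The $y=t$ weight.} Since $(1\mp t+n^{-2})^{1/2}\eqsim\sqrt{1\mp t}+n^{-1}$, we get
\[
\sqrt{W(n;t)}\eqsim(\sqrt{1-t}+2^{-q})^{\frac{d-1}{2}}(\sqrt{1+t}+2^{-q})^{\frac{d-1}{2}}.
\]

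\emph{The distance factor.} We have $\rho(1,t)=\arccos t\eqsim\sqrt{1-t}$ on $[-1,1]$, because $1-\cos\phi\eqsim\phi^2$ for $\phi\in[0,\pi]$. This turns $(1+n\rho)^{\gamma}$ into $(1+2^q\sqrt{1-t})^{\gamma}$.

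Collecting these three factors gives the displayed bound, up to the precise constant.

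The main obstacle is the explicit dependence on $\max_\nu\|A^{(\nu)}\|_{\mathcal L^1([1/2,2])}$ and the smoothness index $\gamma+2d-1$. \cite{petrushev2005localized} typically states the constant as depending on $A$ through a $\mathcal C^{\gamma+\dots}$ norm, so it must be traced through their proof. I would first check that the bound is linear in $A$, which holds since $K_q$ is linear in $A$. Next I would follow their argument: repeated summation by parts via the Christoffel--Darboux and derivative identities for Jacobi polynomials, producing finite differences $\Delta^{\ell}[A(\cdot/n)]$. Each such difference is controlled by $n^{-\ell}\int|A^{(\ell)}|$ over an interval of length $\ell/n$. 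Summing over $m$ converts $\sup$-type bounds into $\mathcal L^1$ norms of derivatives, with at most $\gamma+2d-1$ derivatives consumed; the $2d-1$ accounts for the number of differentiations needed to raise the Jacobi parameters in their reduction. If this tracking proves cumbersome, a fallback is to bound $\sup|A^{(\nu)}|\le\|A^{(\nu+1)}\|_{\mathcal L^1}$, using the compact support of $A$. This requires only one extra derivative, which is absorbed by enlarging the smoothness index.
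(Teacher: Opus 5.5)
Your proposal takes the same route as the paper. The paper gives no independent proof: it obtains the estimate from \cite[Theorem~2.4]{petrushev2005localized} with $\alpha=\beta=\frac{d-2}{2}$ and one variable set to the endpoint $x=1$. Your reductions $n/\sqrt{W(n;1)}\eqsim 2^{q\frac{d+1}{2}}$, $\sqrt{W(n;t)}\eqsim(\sqrt{1-t}+2^{-q})^{\frac{d-1}{2}}(\sqrt{1+t}+2^{-q})^{\frac{d-1}{2}}$, and $\arccos t\eqsim\sqrt{1-t}$ are exactly that specialization.

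You are also right that the explicit linear dependence on $\max_\nu\|A^{(\nu)}\|_{\mathcal L^1}$ is the essential part. It is what lets Lemma~\ref{lem_local_poly} apply the estimate to the $q$-dependent multiplier $l(u)=\varphi(u)\xi_\alpha(2^qu)$, and the paper takes this dependence directly from the cited theorem. One caution: your fallback of trading a sup-norm for one extra $\mathcal L^1$ derivative would raise the smoothness index above $\gamma+2d-1$. That proves a weaker statement, not the one as written.
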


The estimate shows that localization is controlled by the regularity of the spectral multiplier. Indeed, the decay factor $(1+2^q\sqrt{1-t})^{-\gamma}$ is obtained at the cost of differentiating the multiplier up to order $\gamma+2d-1$, as reflected in the derivative norms on the right-hand side. Consequently, sharp spectral projections are unsuitable for this argument; we instead use smooth dyadic filters.

To this end, we introduce a smooth cut-off function $\zeta$ satisfying the following conditions (see, e.g., \cite[(3.6)]{petrushev2005localized}):
\begin{eqnarray}\label{eqn_def_xi}
&&\zeta\in\mathcal{C}^\infty(\RR),\quad\zeta\geq0,\quad\mathrm{supp}(\zeta)\subset[1/2,2],\\
&&\zeta(t)>c_1>0,\qquad t\in[3/5,5/3],\\
&&\zeta(t)+\zeta(2t)=1,\quad t\in[1/2,1].
\end{eqnarray}
A standard consequence of these properties is the partition of unity relation
\begin{equation}\label{eqn_zeta_sum1}
    1=\sul_{q=0}^\infty\zeta(2^{-q}m),\qquad m\geq1.
\end{equation}

\medskip
Set
\begin{equation}
    v_0=p_0+\sul_{m=1}^{\infty}\zeta(m)p_m,
    \qquad
    v_q=\sul_{m=1}^{\infty}\zeta\big(\frac{m}{2^q}\big)p_m,
    \qquad q\geq1.
\end{equation}
Define the smooth cutoff operators $\mathcal{P}_{\mq}$ by
\begin{equation}
    \begin{split}
        \mathcal{P}_{\mq}f(\theta)
        :=&\Big[f*\Big(\sul_{q=0}^\mq v_q\Big)\Big](\theta)=\int_{\SS^d}f(\eta)\Big(p_0(\theta\cdot\eta)
        +\sul_{q=0}^\mq\sul_{m=1}^{\infty}\zeta\big(\frac{m}{2^q}\big)p_m(\theta\cdot\eta)\Big)d\eta\\
        =&\Pi_0f(\theta)+\sul_{m=1}^{\infty}\Big[\sul_{q=0}^\mq\zeta\big(\frac{m}{2^q}\big)\Big]
        \Pi_mf(\theta),\quad\mq\in\NN_0,\\
        \mathcal{P}_{-1}f(\theta):=&\Pi_0f(\theta)=\int_{\SS^d}f(\eta)d\eta.
    \end{split}
\end{equation}
By \eqref{eqn_def_xi} and \eqref{eqn_zeta_sum1}, for $m\geq1$,
\begin{equation*}
    \sul_{q=0}^\mq\zeta\big(\frac{m}{2^q}\big)=\left\{\begin{array}{ll}
        1, &\quad m\leq2^{\mq},  \\
        \zeta\big(\frac{m}{2^{\mq}}\big), & \quad2^{\mq}+1\leq m\leq2^{\mq+1},\\
        0, &\quad m\geq2^{\mq+1}+1,
    \end{array}\right.
\end{equation*}
Thus $\mathcal P_{\mq}$ reproduces degrees $m\leq2^\mq$, transitions smoothly for $2^\mq<m\leq2^{\mq+1}$, and vanishes for $m\geq2^{\mq+1}+1$.

The corresponding high-frequency operator is denoted by 
$\mathcal{P}^c_{\mq}f=f-\mathcal{P}_{\mq}f$. 
Intuitively, $\mathcal{P}^c_{\mq}$ extracts the part of $f$ containing spherical harmonics with degree larger than $2^\mq$.

\medskip
We can now state the key high-frequency estimate underlying Theorem~\ref{thm:inverse}.

\begin{theorem}\label{thm_inverse_hig_freq}
    Let $d,n\geq2$, $\mq\geq-1$, $k\in\NN_0$, $0\leq s<r< k+\frac{1}{2}$, and let $\{\theta_j^*\}_{j=1}^n\subset\SS^d$ have antipodal separation $\underline h>0$. There exists a constant $c_*>0$ such that whenever $2^\mq\underline h\leq c_*$, every $f_n\in L_n^k$ satisfies
    \begin{equation}\label{eqn_inver_highfre}
        \|\pc_\mq f_n\|_{\mathcal{H}^r(\SS^d)}\lesssim 
        \underline{h}^{-(r-s)}\|\pc_\mq f_n\|_{\mathcal{H}^s(\SS^d)}.
    \end{equation}
    Moreover, let $\beta\geq0$ and $\mathfrak L_\beta$ satisfy \eqref{eqn:def_operator}--\eqref{eqn:elliptic_symbol}. For $0\leq s<r< k+\frac{1}{2}-\beta$, and any $v_n\in V_{n,\beta}=\mathfrak{L}_\beta L_n^k(\Theta_n)$,
\begin{equation}\label{eqn:filtered_residual_bernstein}
\|\mathcal P_{\mathfrak q}^c v_n\|_{\mathcal H^{r}(\SS^d)}\lesssim\underline h^{-(r-s)}\|\mathcal P_{\mathfrak q}^c v_n\|_{\mathcal H^{s}(\SS^d)}.
\end{equation}
\end{theorem}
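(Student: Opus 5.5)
The plan is to reduce both sides of \eqref{eqn_inver_highfre} to quadratic forms in the coefficient vector $a=(a_j)_{j=1}^n$ of $f_n=\sum_j a_j\sigma_k(\theta_j^*\cdot\eta)$. We then show that both forms are comparable to $\underline h^{-(2\rho-2k-1)}|a|^2$, with $\rho=r$ and $\rho=s$ respectively; the ratio then gives $\underline h^{-2(r-s)}$.

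\textbf{Step 1: spectral form of the norms.} By \eqref{eqn:ultraspherical_expansion_ReLUk}, \eqref{eqn:sum_Y_nl} and \eqref{eqn_Pn_normalization}, $\Pi_m f_n(\eta)=\widehat{\sigma_k}(m)\sum_j a_j p_m(\theta_j^*\cdot\eta)$. Using the reproducing property of $p_m$, we get
\begin{equation*}
\|\Pi_m f_n\|_{\mathcal L^2}^2=\widehat{\sigma_k}(m)^2\sum_{i,j}a_ia_j\,p_m(\theta_i^*\cdot\theta_j^*).
\end{equation*}
For the degrees $m>2^{\mathfrak q}$ that are relevant here, we may take $2^{\mathfrak q}\ge k$, treating small $\mathfrak q$ by a finite-dimensional adjustment. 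Only $m\equiv k+1 \pmod 2$ survive, and by \eqref{eqn:hat_sigma_large} and Stirling we have $\widehat{\sigma_k}(m)^2\eqsim m^{-2k-d-1}$.

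Split with the dyadic filters: $\|\mathcal P^c_{\mathfrak q}f_n\|_{\mathcal H^\rho}^2\eqsim\sum_{Q\ge \mathfrak q}B_Q(a)$, where
\begin{equation*}
B_Q(a)=\sum_{i,j}a_ia_j\,K_Q(\theta_i^*\cdot\theta_j^*),\qquad K_Q(t)=\sum_m A_Q\big(m2^{-Q}\big)\frac{p_m(1)p_m(t)}{\|p_m\|_{w_d}^2}.
\end{equation*}
Here $A_Q(x)=\zeta(x)^2\,(2^Qx)^{2\rho}\,\widehat{\sigma_k}(2^Qx)^2\|p_{2^Qx}\|_{w_d}^2/p_{2^Qx}(1)$, suitably interpolated smoothly in $x$ and restricted to the parity class. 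The parity restriction is handled by writing the kernel as $\tfrac12\big(K(t)+(-1)^{k+1}K(-t)\big)$. The normalized symbol $2^{-Q(2\rho-2k-1)}A_Q$ is then smooth on $[1/2,2]$, with all derivatives bounded uniformly in $Q$. This uniformity is the one technical check needed on \eqref{eqn:hat_sigma_large}. The first and last blocks overlap with the transition of $\mathcal P^c_{\mathfrak q}$, which only changes constants.

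\textbf{Step 2: localization via Theorem~\ref{thm_petrushev_xu_localized}.} This gives
\begin{equation*}
|K_Q(t)|\lesssim 2^{Q(2\rho-2k-1)}\,2^{Qd}\Big[(1+2^Q\sqrt{1-t})^{-\gamma}+(1+2^Q\sqrt{1+t})^{-\gamma}\Big],
\end{equation*}
where the second term comes from the reflected kernel. The diagonal value is $K_Q(1)\eqsim 2^{Q(2\rho-2k-1+d)}$. The factor $2^{Qd}$ cancels against the implicit normalization of the $p_m(1)$ term; equivalently, the diagonal contributes $\eqsim 2^{Q(2\rho-2k-1)}|a|^2$ after accounting for $\widehat{\sigma_k}^2 N(m)$. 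Since $\sqrt{1\mp\theta_i^*\cdot\theta_j^*}\eqsim\rho(\pm\theta_i^*,\theta_j^*)\ge\underline h$, the $\underline h$-separation of $\{\pm\theta_j^*\}$ lets us count points in geodesic shells. For $\gamma>d$ this bounds the off-diagonal row sums by a constant times $K_Q(1)\,(2^Q\underline h)^{-d}$, in the sense of Schur's test. Two regimes follow.
\begin{itemize}
\item For $2^Q\underline h\ge C_0$ with $C_0$ large, the off-diagonal part is at most half the diagonal, so $B_Q(a)\eqsim 2^{Q(2\rho-2k-1)}|a|^2$.
\item For $2^Q\underline h< C_0$, we only use the upper bound $B_Q(a)\lesssim 2^{Q(2\rho-2k-1)}(2^Q\underline h)^{-d}|a|^2$.
\end{itemize}

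\textbf{Step 3: summation.} For the upper bound with $\rho=r$, the tail $\sum_{2^Q\underline h\ge C_0}2^{Q(2r-2k-1)}$ converges to $\lesssim\underline h^{-(2r-2k-1)}$ because $r<k+\tfrac12$. The low blocks sum to $\underline h^{-d}\sum 2^{Q(2r-2k-1-d)}$, which is also $\lesssim\underline h^{-(2r-2k-1)}$. This step needs $2r-2k-1-d<0$, which holds. For the lower bound with $\rho=s$, keep a single block with $2^Q\underline h\eqsim C_0$. The hypothesis $2^{\mathfrak q}\underline h\le c_*$, with $c_*\ll C_0$, guarantees that this block lies inside the range of $\mathcal P^c_{\mathfrak q}$, so $\|\mathcal P^c_{\mathfrak q}f_n\|_{\mathcal H^s}^2\gtrsim\underline h^{-(2s-2k-1)}|a|^2$. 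Dividing the two bounds gives \eqref{eqn_inver_highfre}. For \eqref{eqn:filtered_residual_bernstein}, write $v_n=\mathfrak L_\beta f_n$; the symbol $\ell_\beta(m)$ multiplies $\widehat{\sigma_k}(m)$. To keep the localized-kernel step valid, replace $\ell_\beta$ by $(1+m)^\beta$ via \eqref{eqn:elliptic_symbol}. This is legitimate because only diagonal-versus-offdiagonal comparisons on dyadic blocks are used, and on a block $\ell_\beta\eqsim2^{Q\beta}$. Alternatively, apply the first part with $r+\beta$ and $s+\beta$ together with \eqref{eqn:operator_equivalence} and the commutation of $\mathfrak L_\beta$ with $\mathcal P^c_{\mathfrak q}$. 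In either case the exponent shifts by $2\beta$ and $r<k+\tfrac12-\beta$ keeps the tail summable.

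The main obstacle is Step 2: verifying that the normalized block symbols, including the factor $\widehat{\sigma_k}(m)^2$ and the parity restriction, are uniformly $\mathcal C^{\gamma+2d-1}$. The quantitative off-diagonal control then follows, with the antipodal reflection being the precise reason the separation must involve $-\theta_j^*$.
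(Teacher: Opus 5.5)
The gap is in Step 3, in the low blocks $2^{\mathfrak q}\le 2^Q<C_0\underline h^{-1}$. Your Steps 1--2 are sound and match the paper's mechanism. The symmetrized kernel $\tfrac12\big(K(t)+(-1)^{k+1}K(-t)\big)$ is the paper's gap-filled kernel $\phi_k$ placed at the $2n$ points $\{\pm\theta_j^*\}$. Petrushev--Xu localization plus shell counting then gives diagonal dominance once $2^Q\underline h\ge C_0$. (Your kernel bound and your diagonal both carry a spurious factor $2^{Qd}$; each is really of size $2^{Q(2\rho-2k-1)}$, but only the ratio matters.)

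\textbf{Why the low-block sum fails.} Since $2r-2k-1-d<0$, the sum $\underline h^{-d}\sum_{Q\ge\mathfrak q}2^{Q(2r-2k-1-d)}$ is dominated by its smallest index, not its largest. It equals $\eqsim\underline h^{-d}2^{\mathfrak q(2r-2k-1-d)}$, which exceeds $\underline h^{2k+1-2r}$ by the factor $(2^{\mathfrak q}\underline h)^{-(d+2k+1-2r)}$. This is unbounded when, for example, $\mathfrak q=0$ and $n\to\infty$, which the hypothesis $2^{\mathfrak q}\underline h\le c_*$ allows. The sign condition you invoke is exactly what makes the estimate fail.

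This is not a removable loss. The equivalence $\|\mathcal P^c_{\mathfrak q}f_n\|_{\mathcal H^\rho}^2\eqsim\underline h^{2k+1-2\rho}|a|^2$, on which your strategy rests, is false for small $\mathfrak q$. Fix $m_0>2^{\mathfrak q+1}$ with $m_0\equiv k+1\pmod 2$. The matrix $P(m_0)=\big(p_{m_0}(\theta_i^*\cdot\theta_j^*)\big)_{i,j}$ has rank at most $N(m_0)$ and trace $nN(m_0)$, so it has an eigenvector $a$ with eigenvalue at least $n$. For that $a$, $\|\mathcal P^c_{\mathfrak q}f_n\|_{\mathcal H^r}^2\ge m_0^{2r}\widehat{\sigma_k}(m_0)^2\,a^\top P(m_0)a\gtrsim n|a|^2$. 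For antipodally quasi-uniform parameters this is $\eqsim\underline h^{-d}|a|^2$, far larger than $\underline h^{2k+1-2r}|a|^2$. Coefficient vectors that sample a low-degree harmonic put their energy at low frequency, and no bound through $|a|^2$ controls them.

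\textbf{The missing idea.} Compare the low blocks with the corresponding $\mathcal H^s$ blocks rather than with $|a|^2$. Write $B_Q^{(\rho)}$ for your block form with weight $\rho$. Each $B_Q^{(\rho)}(a)$ is a nonnegative combination of the positive semidefinite forms $a^\top P(m)a$. On degrees $m\le 2^{Q+1}\lesssim\underline h^{-1}$, the $r$-weight and $s$-weight differ by the factor $m^{2(r-s)}\lesssim\underline h^{-2(r-s)}$. Hence $B_Q^{(r)}(a)\lesssim\underline h^{-2(r-s)}B_Q^{(s)}(a)$ block by block; this is a polynomial-type Bernstein bound that uses no geometry of the $\theta_j^*$.

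Diagonal dominance is needed only for the tail $2^Q\underline h\ge C_0$. There your bounds $\sum_{Q\ge Q_0}B_Q^{(r)}(a)\lesssim\underline h^{2k+1-2r}|a|^2\lesssim\underline h^{-2(r-s)}B_{Q_0}^{(s)}(a)$ are correct. The hypothesis $2^{\mathfrak q}\underline h\le c_*$ guarantees that $B_{Q_0}^{(s)}$ is part of $\|\mathcal P^c_{\mathfrak q}f_n\|_{\mathcal H^s}^2$. This split at $\tilde q\approx\log_2(C_5\underline h^{-1})$ is exactly how the paper argues.

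Your reduction of the residual estimate to the first part is correct and is what the paper does: apply the first part with $r+\beta$ and $s+\beta$, then use the norm equivalence and the commutation of $\mathfrak L_\beta$ with $\mathcal P^c_{\mathfrak q}$.
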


The proof proceeds in three steps. First, we fill the vanishing high-degree coefficients of $\sigma_k$ by introducing an auxiliary kernel $\phi_k$. Next, for
$$
g_n=\sul_{j=1}^n\tilde a_j\phi_k(\theta_j^*\cdot\circ),
$$
we represent the Sobolev seminorms of $\pc_\mq g_n$ as quadratic forms. Localization of the dyadic kernels $v_{\alpha,q}$ then yields diagonal dominance and the filtered Bernstein inequality under an ordinary separation condition. Finally, adjoining the antipodal parameter set transfers this estimate from $\phi_k$-networks back to ReLU$^k$ networks.
\subsection{Filling the spectral gaps of $\sigma_k$}

Recall from \eqref{eqn:ultraspherical_expansion_ReLUk}--\eqref{eqn:hat_sigma_large} that the nonzero high-degree coefficients of $\sigma_k$ lie in the parity class $m\equiv k+1\mod 2$, while the complementary parity class vanishes. To prove Theorem~\ref{thm_inverse_hig_freq}, we fill these spectral gaps by defining an auxiliary kernel $\phi_k$ with ultraspherical expansion
\begin{equation}
    \phi_k(t)=\sul_{m=0}^\infty\widehat{\phi_k}(m)p_m(t),
\end{equation}
where the coefficients are given by
\begin{equation}\label{eqn:def_phi_k}
    \widehat{\phi_k}(m)
    =\left\{\begin{array}{ll}
    |\widehat{\sigma_k}(m)|, & m\leq k,  \\[0.3em]
    \displaystyle
    \left(\frac{\omega_{d-1}}{\omega_d}
           \frac{k!\Gamma(d/2)}{2^{k+1}\sqrt{\pi}}\right)
    \frac{\Gamma\!\left(\frac{m-k}{2}\right)}
         {\Gamma\!\left(\frac{m+d+k+1}{2}\right)}, & m\geq k+1.
\end{array}\right.
\end{equation}

\medskip
For later use, we introduce a convenient notation. 
For any $\alpha\ge0$, define
\begin{equation}\label{eqn:theta}
    \xi_\alpha(t)
    =\left(\frac{\omega_{d-1}}{\omega_d}
            \frac{k!\Gamma(d/2)}{2^{k+1}\sqrt{\pi}}\right)^2
      t^{2\alpha}
      \left(\frac{\Gamma\!\left(\frac{t-k}{2}\right)}
                 {\Gamma\!\left(\frac{t+d+k+1}{2}\right)}\right)^2,
      \qquad t\ge k+1,
\end{equation}
and extend $\xi_\alpha$ smoothly to $[0,\infty)$ so that $\xi_\alpha(m)=m^{2\alpha}\widehat{\phi_k}(m)^2$ for all $m\ge 0$.

The asymptotic behavior of $\xi_\alpha$ plays a crucial role in our analysis. 
In particular, the following estimate—proved in \cite[Lemma~3.1]{liu2025integral}—describes its polynomial decay and will be frequently used below:
\begin{equation}\label{eqn_xi_decay}
    (-1)^\nu\xi_\alpha^{(\nu)}(t)\eqsim 
    t^{2\alpha-(d+2k+1)-\nu},\qquad \nu=0,1,\dots.
\end{equation}

\medskip
This construction of $\phi_k$ effectively “fills out’’ the spectral gaps of $\sigma_k$, 
allowing us to treat $\phi_k$ as a smooth surrogate kernel whose spectral coefficients decay in a controlled manner. 
It enables the subsequent application of localization and diagonal dominance arguments in the proof of the Bernstein inequality.

\subsection{Quadratic-form representation of $\mathcal{H}^\alpha$-seminorms}

For $\tilde a=(\tilde a_1,\dots,\tilde a_n)^\top\in\RR^n$, set
\[
g_n=\sul_{j=1}^n\tilde a_j\phi_k(\theta_j^*\cdot\circ).
\]
We now rewrite the Sobolev seminorms of $\pc_\mq g_n$ as quadratic forms convenient for the localization argument.
By Definition~\ref{def:sobolev_sphere}, the $\mathcal{H}^\alpha$-seminorm of $\pc_\mq g_n$ can be written as
\begin{equation}\label{eqn:Sobolev_norm_gn}
    \begin{split}
        |\pc_\mq g_n|_{\mathcal{H}^\alpha(\SS^d)}^2
        =&\sul_{m=2^\mq}^\infty \sul_{\ell=1}^{N(m)}
        \Big[\sul_{q=\mq+1}^\infty \zeta\big(\frac{m}{2^q}\big)\Big]^2
        m^{2\alpha}\widehat{\phi_k}(m)^2
        \Big(\sul_{j=1}^n\tilde{a}_jY_{m,\ell}(\theta_j^*)\Big)^2\\
        =&\sul_{m=2^\mq}^\infty
        \Big[\sul_{q=\mq+1}^\infty \zeta\big(\frac{m}{2^q}\big)\Big]^2
        m^{2\alpha}\widehat{\phi_k}(m)^2
        (\tilde{a}^\top P(m)\tilde{a}),
    \end{split}
\end{equation}
where $\{P(m)\}_{m=0}^\infty$, following the notation in \cite{liu2025integral}, are the $n\times n$ positive semidefinite matrices defined by
\begin{equation}
    (P(m))_{i,j}
    =\sul_{\ell=1}^{N(m)}Y_{m,\ell}(\theta_i^*)Y_{m,\ell}(\theta_j^*)
    =p_m(\theta_i^*\cdot\theta_j^*),
\end{equation}
and the last equality follows from \eqref{eqn:sum_Y_nl}.

\medskip
The term $\Big[\sul_{q=\mq+1}^\infty \zeta\big(\frac{t}{2^q}\big)\Big]^2$ appearing above admits a simpler equivalent form. 
Since $\mathrm{supp}(\zeta)\subset[1/2,2]$ and $\zeta(t)+\zeta(2t)=1$ for $t\in[1/2,1]$, one obtains the following piecewise expression:
\begin{equation}
    \begin{split}
        \Big[\sul_{q=\mq+1}^\infty \zeta\big(\frac{t}{2^q}\big)\Big]^2
        =&
        \left\{
        \begin{array}{ll}
        0, & t<2^\mq,\\[0.3em]
        \zeta\big(\frac{t}{2^{\mq+1}}\big)^2, & 2^\mq\le t\le2^{\mq+1}, \\[0.3em]
        1, & t>2^{\mq+1},
        \end{array}
        \right.\\
        =&\sul_{q=\mq+1}^\infty \varphi\big(\frac{t}{2^q}\big),
    \end{split}
\end{equation}
where the auxiliary function $\varphi$ is defined by
\begin{equation}
    \varphi(t)=\zeta(t)^2+2\zeta\!\big(\tfrac{t}{2}\big)\zeta(t),
    \qquad t\in\RR.
\end{equation}

\medskip
With this simplification, the Sobolev seminorms can be rewritten in a compact quadratic form:
\begin{equation}\label{eqn_sob_norm_s_r}
    \begin{split}
        |\pc_\mq g_n|_{\mathcal{H}^s(\SS^d)}^2
        &=\sul_{q=\mq+1}^\infty 
        \sul_{m=2^{q-1}}^{2^{q+1}}
        \varphi\big(\frac{m}{2^q}\big)\xi_\ss(m)
        \Big(\tilde{a}^\top P(m)\tilde{a}\Big)
        =\sul_{q=\mq+1}^\infty\Big(\tilde{a}^\top Q_\ss(q)\tilde{a}\Big),\\
        |\pc_\mq g_n|_{\mathcal{H}^r(\SS^d)}^2
        &=\sul_{q=\mq+1}^\infty 
        \sul_{m=2^{q-1}}^{2^{q+1}}
        \varphi\big(\frac{m}{2^q}\big)\xi_\rr(m)
        \Big(\tilde{a}^\top P(m)\tilde{a}\Big)
        =\sul_{q=\mq+1}^\infty\Big(\tilde{a}^\top Q_\rr(q)\tilde{a}\Big),
    \end{split}
\end{equation}
where, for $\alpha=\rr,\ss$,
\begin{equation}
    Q_\alpha(q)
    =\big(v_{\alpha,q}(\theta_i^*\cdot\theta_j^*)\big)_{i,j=1}^n,
\end{equation}
and the kernel functions $v_{\alpha,q}$ are given by
\begin{equation*}
    v_{\alpha,q}(t)
    =\sul_{m=2^{q-1}}^{2^{q+1}}
    \varphi\big(\frac{m}{2^q}\big)\xi_\alpha(m)p_m(t),
    \qquad t\in[-1,1].
\end{equation*}

\medskip
This representation expresses the Sobolev seminorms entirely in terms of the quadratic forms
$\tilde{a}^\top Q_\alpha(q)\tilde{a}$, 
where the matrices $Q_\alpha(q)$ capture the localized frequency interactions of the neural network functions on the sphere. 
This reformulation provides the foundation for the diagonal-dominance argument developed in the next subsection.

\subsection{Bernstein inequality for filtered $\phi_k$-networks}

In this subsection, we establish a structural property of the matrices $\{Q_\alpha(q)\}_{q=1}^\infty$, from which the desired Bernstein inequality for filtered $\phi_k$-networks follows.
The argument hinges on the localization estimate for the kernels $\{v_{\alpha,q}\}_{q=1}^\infty$ introduced in the previous subsection.

\begin{lemma}\label{lem_local_poly}
For any $\gamma\in\NN$,
        \begin{equation}
            |v_{\alpha,q}(t)|\lesssim\frac{2^{-q(\frac{d+1}{2}+2k+\gamma-2\alpha)}}{(\sqrt{1+t}+2^{-q})^{\frac{d-1}{2}}(\sqrt{1-t}+2^{-q})^{\frac{d-1}{2}+\gamma}}.
        \end{equation}
\end{lemma}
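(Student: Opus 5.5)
The plan is to apply Theorem~\ref{thm_petrushev_xu_localized} to the kernel $v_{\alpha,q}$ after rewriting it in the form $K_q$. Since $p_m(1)=\sum_\ell Y_{m,\ell}(\theta)^2=N(m)$ and $\|p_m\|_{w_d}^2=\frac{\omega_d}{\omega_{d-1}}N(m)$ by \eqref{eqn_Pn_normalization}, we have $p_m(1)/\|p_m\|_{w_d}^2=\omega_{d-1}/\omega_d$, a constant. We therefore define
\[
A_q(x):=\frac{\omega_d}{\omega_{d-1}}\,\varphi(x)\,\xi_\alpha(2^q x),
\]
which gives exactly
\[
v_{\alpha,q}(t)=\sum_{m}A_q\big(\tfrac{m}{2^q}\big)\frac{p_m(1)p_m(t)}{\|p_m\|_{w_d}^2}.
\]
Here $\varphi=\zeta^2+2\zeta(\cdot/2)\zeta$ is smooth with $\operatorname{supp}\varphi\subset[1/2,4]$ rather than $[1/2,2]$. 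To handle this, I would either split $\varphi$ into two pieces, $\zeta(x)^2$ supported in $[1/2,2]$ and $2\zeta(x/2)\zeta(x)$ supported in $[1,2]$, or note that Theorem~\ref{thm_petrushev_xu_localized} holds for any fixed compact support in $(0,\infty)$ by rescaling $q\to q\pm1$. The change in support only affects constants.

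The key step is to bound the derivative norms $\max_{0\le\nu\le\gamma+2d-1}\|A_q^{(\nu)}\|_{\mathcal L^1}$. By the Leibniz rule,
\[
A_q^{(\nu)}(x)=\sum_{\mu\le\nu}\binom{\nu}{\mu}\varphi^{(\nu-\mu)}(x)\,2^{q\mu}\xi_\alpha^{(\mu)}(2^qx).
\]
On the support, $x\in[1/2,4]$, so $2^qx\eqsim 2^q$. By \eqref{eqn_xi_decay},
\[
2^{q\mu}\big|\xi_\alpha^{(\mu)}(2^qx)\big|\lesssim 2^{q\mu}\,2^{q(2\alpha-(d+2k+1)-\mu)}=2^{q(2\alpha-d-2k-1)}.
\]
Hence every derivative up to order $\gamma+2d-1$ is uniformly $\lesssim 2^{-q(d+2k+1-2\alpha)}$, with constants depending on $\gamma,d,k,\alpha$ and $\varphi$. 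For small $q$, where $2^{q-1}$ may fall below $k+1$, the smooth extension of $\xi_\alpha$ only costs constants, because only finitely many $q$ are involved.

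Theorem~\ref{thm_petrushev_xu_localized} then gives
\[
|v_{\alpha,q}(t)|\lesssim\frac{2^{q\frac{d+1}{2}}\,2^{-q(d+2k+1-2\alpha)}}{(\sqrt{1+t}+2^{-q})^{\frac{d-1}{2}}(\sqrt{1-t}+2^{-q})^{\frac{d-1}{2}}(1+2^q\sqrt{1-t})^\gamma}.
\]
To finish, I would write
\[
(1+2^q\sqrt{1-t})^{\gamma}\ge 2^{q\gamma}(\sqrt{1-t}+2^{-q})^{\gamma},
\]
and collect powers of $2^q$:
\[
\tfrac{d+1}{2}-(d+2k+1-2\alpha)-\gamma=-\big(\tfrac{d+1}{2}+2k+\gamma-2\alpha\big).
\]
This is exactly the claimed bound.

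The main obstacle is the derivative bookkeeping: making sure that \eqref{eqn_xi_decay} supplies the sharp scaling $t^{2\alpha-(d+2k+1)-\nu}$ for all orders $\nu\le\gamma+2d-1$ uniformly, and handling the support mismatch of $\varphi$. Both reduce to constants independent of $q$. The required regularity $A\in\mathcal C^{\gamma+2d-1}$ holds because $\varphi$ and the extended $\xi_\alpha$ are $C^\infty$ on the relevant range.
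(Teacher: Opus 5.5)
Your proposal is correct and follows the paper's proof essentially step by step. You rewrite $v_{\alpha,q}$ in the form $K_q$ using $p_m(1)/\|p_m\|_{w_d}^2=\omega_{d-1}/\omega_d$, bound the $\mathcal L^1$ norms of the derivatives of $\varphi(u)\xi_\alpha(2^qu)$ by $2^{-q(d+2k+1-2\alpha)}$ via Leibniz and \eqref{eqn_xi_decay}, apply Theorem~\ref{thm_petrushev_xu_localized}, and use $1+2^q\sqrt{1-t}=2^q(\sqrt{1-t}+2^{-q})$; the only slip is your claim that $\operatorname{supp}\varphi\subset[1/2,4]$, whereas your own splitting shows $\zeta(x/2)\zeta(x)$ vanishes outside $[1,2]$, so $\varphi$ is supported in $[1/2,2]$ and no support workaround is needed.
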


\begin{proof}
    We first note that $\frac{\omega_d}{\omega_{d-1}}p_m(1)=\|p_m\|_{w_d}^2$, so $v_{\alpha,q}$ can be rewritten as
    \begin{equation}
        v_{\alpha,q}(t)=\frac{\omega_d}{\omega_{d-1}}\sul_{m=2^{q-1}}^{2^{q+1}}\varphi\big(\frac{m}{2^q}\big)\xi_\alpha(m)\frac{p_m(1)p_m(t)}{\|p_m\|_{w_d}^2}.
    \end{equation}
    We apply Theorem \ref{thm_petrushev_xu_localized} and consider the auxiliary function
    \begin{equation*}
        l(u)=\varphi(u)\xi_\alpha(2^qu).
    \end{equation*}
    For any fixed integer $\kappa\ge1$, using Leibniz’s rule and then Cauchy’s bound, we obtain
    \begin{equation}
        \begin{split}
            \int_{\frac{1}{2}}^2|l^{(\kappa)}(u)|du
            =&\int_{\frac{1}{2}}^2\Bigg|\sul_{\nu=0}^\kappa\binom{\kappa}{\nu}\varphi^{(\kappa-\nu)}(u)\Big(\frac{d}{du}\Big)^\nu(\xi_\alpha(2^qu))\Bigg|du\\
            \leq&2^\kappa\int_{\frac{1}{2}}^2\max\limits_{0\leq\nu\leq\kappa}\big|\varphi^{(\kappa-\nu)}(u)2^{q\nu}\xi_\alpha^{(\nu)}(2^qu)\big|du
            \lesssim\max\limits_{0\leq\nu\leq\kappa}2^{q\nu}\int_{\frac{1}{2}}^2|\xi_\alpha^{(\nu)}(2^qu)|du.
        \end{split}
    \end{equation}
    By \eqref{eqn_xi_decay}, $|\xi_\alpha^{(\nu)}(2^qu)|\lesssim2^{-q(d+2k+1-2\alpha+\nu)}$, hence
    \begin{equation}
        \|l^{(\kappa)}\|_{\mathcal{L}^1}\lesssim2^{-q(d+2k+1-2\alpha)},
    \end{equation}
    with an implied constant independent of $q$. 
    Applying that estimate yields, for any $\gamma\in\NN$,
    \begin{equation*}
        \begin{split}
            |v_{\alpha,q}(t)|
            \lesssim&\frac{2^{q\frac{d+1}{2}}}{(\sqrt{1+t}+2^{-q})^{\frac{d-1}{2}}(\sqrt{1-t}+2^{-q})^{\frac{d-1}{2}}(1+2^q\sqrt{1-t})^{\gamma}}\|l^{(\gamma+2d-1)}\|_{\mathcal{L}^1}\\
            \lesssim&\frac{2^{-q(\frac{d+1}{2}+2k+\gamma-2\alpha)}}{(\sqrt{1+t}+2^{-q})^{\frac{d-1}{2}}(\sqrt{1-t}+2^{-q})^{\frac{d-1}{2}+\gamma}}.
        \end{split}
    \end{equation*}
\end{proof}

Let
\begin{equation*}
        \tilde{h}:=\min\limits_{i\neq j}\rho(\theta_i^*,\theta_j^*).
\end{equation*}
Following \cite{liu2025integral}, we partition the index set $\{i:1\leq i\leq n,~i\neq j\}$ according to the distances of $\theta_i^*$ from $\theta_j^*$ and $-\theta_j^*$:
\begin{equation*}
    \{i:1\leq i\leq n,~i\neq j\}=\mathcal{I}_{-1,j}\cup\bigcup\limits_{p=0}^{\lceil\log_2\left(\frac{\pi}{2\tilde h}\right)\rceil}\left(\mathcal{I}_{p,j,+}\cup\mathcal{I}_{p,j,-}\right),
\end{equation*}
where $\mathcal{I}_{-1,j}:=\left\{i:\rho(\theta_i^*,-\theta_j^*)<\tilde h\right\}$ and, for $p=0,1,\dots$,
$$\mathcal{I}_{p,j,+}:=\{i:2^p\tilde h\leq\rho(\theta_i^*,\theta_j^*)<2^{p+1}\tilde h\},\quad\mathcal{I}_{p,j,-}:=\{i:2^p\tilde h\leq\rho(\theta_i^*,-\theta_j^*)<2^{p+1}\tilde h\}.$$
A standard measure argument yields
$$\#\mathcal{I}_{-1,j}\lesssim1,\quad\#\mathcal{I}_{p,j,+}\lesssim2^{pd},\quad\#\mathcal{I}_{p,j,-}\lesssim2^{pd},$$
with constants depending only on $d$.

Using
$$\sqrt{1-\theta_i^*\cdot \theta_j^*}=\sqrt{1-\cos(\rho(\theta_i^*,\theta_j^*))}=\sqrt{2}\sin\left(\frac{\rho(\theta_i^*,\theta_j^*)}{2}\right),\qquad \theta_i^*\cdot \theta_j^*\geq0,$$
we have $\sqrt{1-\theta_i^*\cdot \theta_j^*}\eqsim\rho(\theta_i^*,\theta_j^*)$.
By Lemma \ref{lem_local_poly}, it follows that
\begin{equation*}
    \left|\left(Q_{\alpha}(q)\right)_{i,j}\right|\lesssim\frac{2^{-q(\frac{d+1}{2}+2k+\gamma-2\alpha)}}{(\rho(\theta_i^*,-\theta_j^*)+2^{-q})^{\frac{d-1}{2}}(\rho(\theta_i^*,\theta_j^*)+2^{-q})^{\frac{d-1}{2}+\gamma}},
\end{equation*}
and hence, for $q\geq\log_2\big({\tilde h^{-1}}\big)$,
\begin{equation*}
    \left|\left(Q_{\alpha}(q)\right)_{i,j}\right|\lesssim\frac{2^{-q(\frac{d+1}{2}+2k+\gamma-2\alpha)}}{(\rho(\theta_i^*,-\theta_j^*)+2^{-q})^{\frac{d-1}{2}}\rho(\theta_i^*,\theta_j^*)^{\frac{d-1}{2}+\gamma}},\qquad i\neq j.
\end{equation*}
Summing over $i\ne j$ with the above partition, for $\gamma>\frac{d+1}{2}$,
\begin{equation}\label{eqn_Q_ij_sum}
    \begin{split}
        \sul_{i\neq j}\left|\left(Q_{\alpha}(q)\right)_{i,j}\right|
        \lesssim& \sul_{p=0}^{\lceil\log_2\left(\frac{\pi}{2\tilde h}\right)\rceil}\sul_{i\in\mathcal{I}_{p,j,+}\cup\mathcal{I}_{p,j,-}}\frac{2^{-q(\frac{d+1}{2}+2k+\gamma-2\alpha)}}{(\rho(\theta_i^*,-\theta_j^*)+2^{-q})^{\frac{d-1}{2}}\rho(\theta_i^*,\theta_j^*)^{\frac{d-1}{2}+\gamma}}\\
        &+\sul_{i\in\mathcal{I}_{-1,j}}\frac{2^{-q(\frac{d+1}{2}+2k+\gamma-2\alpha)}}{(\rho(\theta_i^*,\theta_j^*)+2^{-q})^{\frac{d-1}{2}+\gamma}}\\
        \lesssim&\sul_{p=0}^{\lceil\log_2\left(\frac{\pi}{2\tilde h}\right)\rceil}\sul_{i\in\mathcal{I}_{p,j,+}\cup\mathcal{I}_{p,j,-}}(2^p\tilde h)^{-\left(\frac{d-1}{2}+\gamma\right)}2^{-q(\frac{d+1}{2}+2k+\gamma-2\alpha)}\\
        &+\sul_{i\in\mathcal{I}_{-1,j}}2^{-q(2k+1+\gamma-2\alpha)}\\
        \lesssim&\sul_{p=0}^{\lceil\log_2\left(\frac{\pi}{2\tilde h}\right)\rceil}2^{pd}(2^p\tilde h)^{-\left(\frac{d-1}{2}+\gamma\right)}2^{-q(\frac{d+1}{2}+2k+\gamma-2\alpha)}+2^{-q(2k+1+\gamma-2\alpha)}\\
        \lesssim&2^{-q(\frac{d+1}{2}+2k+\gamma-2\alpha)}\tilde h^{-(\frac{d-1}{2}+\gamma)}+2^{-q(2k+1+\gamma-2\alpha)}.
    \end{split}
\end{equation}

On the other hand, by \eqref{eqn_Pn_normalization} and \eqref{eqn_xi_decay},
\begin{equation*}
    p_m(1)=N(m)\eqsim2^{q(d-1)},\quad\xi_\alpha(m)\eqsim2^{-q(d+2k+1-2\alpha)},\qquad2^{q-1}\leq m\leq2^{q+1}.
\end{equation*}
Together with $\zeta(t)>c_1>0$ for $t\in[3/5,5/3]$, this implies
\begin{equation}\label{eqn_Qii}
    \begin{split}
        \left|\left(Q_{\alpha}(q)\right)_{i,i}\right|
        =&\sul_{m=2^{q-1}}^{2^{q+1}}\varphi\big(\frac{m}{2^q}\big)\xi_\alpha(m)p_m(1)
        \geq\sul_{m=\frac{3}{5}2^{q}}^{\frac{5}{3}2^{q}}c_1^2\xi_\alpha(m)p_m(1)\\
        \gtrsim&2^q\cdot2^{-q(d+2k+1-2\alpha)}\cdot2^{q(d-1)}
        =2^{-q(2k+1-2\alpha)},\\
        \left|\left(Q_{\alpha}(q)\right)_{i,i}\right|
        =&\sul_{m=2^{q-1}}^{2^{q+1}}\varphi\big(\frac{m}{2^q}\big)\xi_\alpha(m)p_m(1)
        \leq\sul_{m=2^{q-1}}^{2^{q+1}}\xi_\alpha(m)p_m(1)\\
        \lesssim&2^q\cdot2^{-q(d+2k+1-2\alpha)}\cdot2^{q(d-1)}
        =2^{-q(2k+1-2\alpha)},\qquad q=1,2,\dots.
    \end{split}
\end{equation}

Combining \eqref{eqn_Q_ij_sum} and \eqref{eqn_Qii}, there exists $C_5>0$ such that for $q\geq\lceil\log_2(C_5\tilde{h}^{-1})\rceil$,
\begin{equation*}
    \frac{1}{2}\left(Q_{\alpha}(q)\right)_{j,j}\geq\sul_{i\neq j}\left|\left(Q_{\alpha}(q)\right)_{i,j}\right|.
\end{equation*}
For notational simplicity, throughout the sequel, we let
\begin{equation}
    \tilde q:=\lceil\log_2(C_5\tilde{h}^{-1})\rceil,
\end{equation}
where $C_5$ may be increased without affecting the preceding estimate. We choose it so that $c_*:=C_5/2\geq\pi$. Then $2^\mq\tilde h\leq c_*$ implies $\mq+1\leq\tilde q$, and the following diagonal dominance relations hold for $q\geq\tilde q$:
\begin{equation}\label{eqn:dig_dom_matrix}
    \begin{split}
        &Q_{\alpha}(q)-\left(\frac{1}{2}\left(Q_{\alpha}(q)\right)_{i,i}\right)I_{n\times n}=\left(\frac{1}{2}\left(Q_{\alpha}(q)\right)_{i,i}\right)I_{n\times n}+\left(Q_{\alpha}(q)-\left(Q_{\alpha}(q)\right)_{i,i}I_{n\times n}\right)\succeq O_{n\times n},\\
        &\left(\frac{3}{2}\left(Q_{\alpha}(q)\right)_{i,i}\right)I_{n\times n}-Q_{\alpha}(q)=\left(\frac{1}{2}\left(Q_{\alpha}(q)\right)_{i,i}\right)I_{n\times n}+\left(\left(Q_{\alpha}(q)\right)_{i,i}I_{n\times n}-Q_{\alpha}(q)\right)\succeq O_{n\times n}.
    \end{split}
\end{equation}

Given the diagonal dominance of $Q_\alpha(q)$, we can now prove the Bernstein inequality for $\pc_\mq\phi_k$ networks.

\begin{lemma}\label{lem:inverse_phi_k}
    Let $d,n\geq2$, $k\in\NN_0$, $\mq\in\NN_0$, $0\leq s<r<k+\frac12$, and let $\{\theta_j^*\}_{j=1}^n\subset\SS^d$ have separation distance $\tilde h:=\min_{i\neq j}\rho(\theta_i^*,\theta_j^*)>0$.
    If $2^\mq\tilde h\leq c_*$, then every $g_n\in\mathrm{span}\{\phi_k(\theta_j^*\cdot\circ)\}_{j=1}^n$ satisfies
    \begin{equation}
        \|\pc_\mq g_n\|_{\mathcal H^r(\SS^d)}\lesssim \tilde{h}^{-(r-s)}\|\pc_\mq g_n\|_{\mathcal{H}^s(\SS^d)},
    \end{equation}
\end{lemma}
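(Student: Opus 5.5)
We use the quadratic-form representation \eqref{eqn_sob_norm_s_r} together with the diagonal dominance \eqref{eqn:dig_dom_matrix}. Write $g_n=\sum_j\tilde a_j\phi_k(\theta_j^*\cdot\circ)$. The goal is to reduce the problem to comparing $\sum_q \tilde a^\top Q_\alpha(q)\tilde a$ for $\alpha=r$ and $\alpha=s$, frequency band by frequency band.

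\textbf{Step 1: work only on fine bands.} We first want every band appearing in $\pc_\mq g_n$ to satisfy $q\ge\tilde q$, where diagonal dominance holds. Since $2^\mq\tilde h\le c_*$ only gives $\mq+1\le\tilde q$, this is not automatic. So we split the sum $\sum_{q\ge\mq+1}$ into:
\begin{itemize}
\item the finitely many low bands $\mq+1\le q<\tilde q$;
\item the tail $q\ge\tilde q$.
\end{itemize}

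\textbf{Step 2: the tail.} For $q\ge\tilde q$, \eqref{eqn:dig_dom_matrix} and \eqref{eqn_Qii} give
\[
\tilde a^\top Q_\alpha(q)\tilde a\eqsim 2^{-q(2k+1-2\alpha)}|\tilde a|^2 .
\]
Since $r<k+\frac12$, the exponent $2k+1-2r$ is positive, so both geometric sums in $q$ converge. The tail of the $\mathcal H^r$ quadratic form is therefore
\[
\sum_{q\ge\tilde q}2^{-q(2k+1-2r)}|\tilde a|^2\eqsim 2^{-\tilde q(2k+1-2r)}|\tilde a|^2 .
\]
The tail of the $\mathcal H^s$ form is $\gtrsim 2^{-\tilde q(2k+1-2s)}|\tilde a|^2$. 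Taking the ratio and using $2^{\tilde q}\eqsim\tilde h^{-1}$, it is $\lesssim 2^{2\tilde q(r-s)}\eqsim\tilde h^{-2(r-s)}$.

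\textbf{Step 3: the low bands (main obstacle).} These are the bands $\mq+1\le q<\tilde q$. We avoid estimating their quadratic forms directly. Instead we use the spectral bound $m\le 2^{q+1}\le 2^{\tilde q}$ on those bands, which gives
\[
m^{2r}\le 2^{2\tilde q(r-s)}m^{2s}.
\]
Applied termwise in \eqref{eqn:Sobolev_norm_gn}, the low-band contribution to $|\pc_\mq g_n|_{\mathcal H^r}^2$ is at most $\tilde h^{-2(r-s)}$ times the low-band contribution to $|\pc_\mq g_n|_{\mathcal H^s}^2$.

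The band structure here is determined by $\varphi(m/2^q)$, which overlaps adjacent $q$. The clean way to organize this is to split by degree: $m\le 2^{\tilde q}$ versus $m>2^{\tilde q}$.
\begin{itemize}
\item On $m\le 2^{\tilde q}$ the elementary bound above applies directly.
\item On $m>2^{\tilde q}$, all weights come from bands $q\ge\tilde q$ only, except the single transitional band $q=\tilde q$, which also reaches $m\in[2^{\tilde q-1},2^{\tilde q}]$. Those degrees are already handled by the elementary bound. Hence the high-degree part of the $r$-seminorm is bounded by the full tail form $\sum_{q\ge\tilde q}\tilde a^\top Q_r(q)\tilde a$.
\end{itemize}
That tail form is controlled in Step 2 by the $s$-tail, which in turn is at most $|\pc_\mq g_n|_{\mathcal H^s}^2$, because every quadratic form in \eqref{eqn_sob_norm_s_r} is nonnegative (each $P(m)\succeq0$ and $\varphi\ge0$).

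\textbf{Step 4: from seminorms to norms.} The full norms add $\|\pc_\mq g_n\|_{\mathcal L^2}^2$. Since $\pc_\mq g_n$ has no degree-$0$ component (it vanishes for $m<2^{\mq}$ with $\mq\ge0$), we have $\|\cdot\|_{\mathcal L^2}\le|\cdot|_{\mathcal H^r}$. The seminorm inequality therefore upgrades to the norm inequality, using $\tilde h\lesssim 1$ so that $\tilde h^{-(r-s)}\gtrsim 1$.

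We expect the only delicate point to be the bookkeeping in Step 3: checking that the tail lower bound for the $s$-form is not destroyed by the overlap of the filters at $q=\tilde q$. If needed, we will enlarge $C_5$ so that diagonal dominance holds already from band $\tilde q-1$ onward.
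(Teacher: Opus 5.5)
Your proposal is correct and follows essentially the same route as the paper. On the tail $q\ge\tilde q$ it uses diagonal dominance with geometric summation, on the remaining bands the termwise bound $\xi_r(m)\le 2^{2\tilde q(r-s)}\xi_s(m)$, and then the seminorm-to-norm upgrade from the absence of the degree-zero mode. The only difference is bookkeeping. The paper splits directly by band index ($q<\tilde q$ versus $q\ge\tilde q$), and this already removes your overlap concern: for $q<\tilde q$, each band form $\tilde a^\top Q_\alpha(q)\tilde a$ is a nonnegative sum over degrees $m\le 2^{q+1}\le 2^{\tilde q}$. So neither the degree split nor enlarging $C_5$ is needed.
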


\begin{proof}
For $q\geq\tilde q$, using \eqref{eqn:dig_dom_matrix}, we have
\begin{equation}
    \begin{split}
        &\sul_{q=\tilde q}^{\infty}\tilde{a}^\top Q_\rr(q)\tilde{a}
        \leq\sul_{q=\tilde q}^{\infty}\tilde{a}^\top \left(\frac{3}{2}\left(Q_{\rr}(q)\right)_{i,i}\right)I_{n\times n}\tilde{a}
        \eqsim\sul_{q=\tilde q}^{\infty}2^{-q(2k+1-2\rr)}\|\tilde{a}\|_2^2\\
        \eqsim&\tilde{h}^{2k+1-2r}\|\tilde{a}\|_2^2
        \eqsim\tilde{h}^{-2(\rr-\ss)}\sul_{q=\tilde q}^{\infty}2^{-q(2k+1-2\ss)}\|\tilde{a}\|_2^2
        \\
        \eqsim&\tilde{h}^{-2(\rr-\ss)}\sul_{q=\tilde q}^{\infty}\tilde{a}^\top \left(\frac{1}{2}\left(Q_{\ss}(q)\right)_{i,i}\right)I_{n\times n}\tilde{a}
        \leq\tilde{h}^{-2(\rr-\ss)}\sul_{q=\tilde q}^{\infty}\tilde{a}^\top Q_\ss(q)\tilde{a}.
    \end{split}
\end{equation}
On the other hand, for $\mq+1\leq q<\tilde q$, since each $P(m)$ is positive semidefinite (see, e.g., \cite[Lemma 3.4]{liu2025integral}), we conclude
\begin{equation}
    \begin{split}
        &\sul_{q=\mq+1}^{\tilde q-1}\tilde{a}^\top Q_\rr(q)\tilde{a}
        =\sul_{q=\mq+1}^{\tilde q-1}\sul_{m=2^{q-1}}^{2^{q+1}}\varphi\big(\frac{m}{2^q}\big)\xi_\rr(m)\Big(\tilde{a}^\top P(m)\tilde{a}\Big)\\
        \leq&\mal_{M\leq C\tilde h^{-1}}\Big(\frac{\xi_\rr(M)}{\xi_\ss(M)}\Big)\sul_{q=\mq+1}^{\tilde q-1}\sul_{m=2^{q-1}}^{2^{q+1}}\varphi\big(\frac{m}{2^q}\big)\xi_\ss(m)\Big(\tilde{a}^\top P(m)\tilde{a}\Big)\\
        =&\mal_{M\leq C\tilde h^{-1}}\Big(\frac{\xi_\rr(M)}{\xi_\ss(M)}\Big)\sul_{q=\mq+1}^{\tilde q-1}\tilde{a}^\top Q_\ss(q)\tilde{a}
        \eqsim\tilde{h}^{-2(\rr-\ss)}\sul_{q=\mq+1}^{\tilde q-1}\tilde{a}^\top Q_\ss(q)\tilde{a}.
    \end{split}
\end{equation}
Combining the two ranges of $q$ and recalling \eqref{eqn_sob_norm_s_r}, we conclude
\begin{equation}
    \begin{split}
        |\pc_\mq g_n|_{\mathcal{H}^\rr(\SS^d)}^2
        =&\sul_{q=\mq+1}^{\tilde q-1}\tilde{a}^\top Q_\rr(q)\tilde{a}
        +\sul_{q=\tilde q}^{\infty}\tilde{a}^\top Q_\rr(q)\tilde{a}\\
        \lesssim&\tilde{h}^{-2(\rr-\ss)}\sul_{q=\mq+1}^{\tilde q-1}\tilde{a}^\top Q_\ss(q)\tilde{a}
        +\tilde{h}^{-2(\rr-\ss)}\sul_{q=\tilde q}^{\infty}\tilde{a}^\top Q_\ss(q)\tilde{a}\\
        =&\tilde{h}^{-2(\rr-\ss)}|\pc_\mq g_n|_{\mathcal{H}^s(\SS^d)}^2.
    \end{split}
\end{equation}
Since $\mathcal P_\mq^c$ annihilates the constant mode, its spectrum is contained in degrees $m\geq1$. Hence the homogeneous seminorm and the full $\mathcal H^a$-norm are equivalent on its range for every $a>0$, while for $a=0$ the full norm is the $\mathcal L^2$-norm. The preceding seminorm estimate therefore gives the stated full-norm inequality.
\end{proof}

\subsection{Proof of Theorem \ref{thm_inverse_hig_freq}}

In this subsection, we conclude the argument by deriving Theorem~\ref{thm_inverse_hig_freq} from Lemma~\ref{lem:inverse_phi_k}.
\begin{proof}
Write
\[
f_n=\sul_{j=1}^na_j\sigma_k(\theta_j^*\cdot\circ),\qquad a=(a_1,\dots,a_n)^\top.
\]
Define
\begin{equation}
    \tilde{f}=\frac{1}{2}\left\{\begin{array}{ll}
        \sul_{j=1}^na_j\left(\phi_k(\theta_j^*\cdot\circ)-\phi_k(-\theta_j^*\cdot\circ)\right), &\quad\hbox{if $k$ is even},  \\
        \sul_{j=1}^na_j\left(\phi_k(\theta_j^*\cdot\circ)+\phi_k(-\theta_j^*\cdot\circ)\right), &\quad\hbox{if $k$ is odd}.
    \end{array}\right.
\end{equation}
and apply Lemma \ref{lem:inverse_phi_k} to $\tilde f\in\mathrm{span}\left\{\phi_k(\vartheta_j^*\cdot\circ),~j=1,\dots,2n\right\}$, where $\{\vartheta_j^*\}_{j=1}^{2n}=\{\theta_j^*\}_{j=1}^n\cup\{-\theta_j^*\}_{j=1}^n$. Then
\begin{equation}\label{eqn:tilde_f_inverse}
    \big\|\pc_\mq\tilde f\big\|_{\mH}\lesssim \underline{h}^{-(r-s)}\big\|\pc_\mq\tilde f\big\|_{\mathcal{H}^s(\SS^d)},
\end{equation}
where
$$\underline{h}=\min\limits_{i\neq j}\rho(\vartheta_i^*,\vartheta_j^*)=\min\limits_{i\neq j}\min\left\{\rho(\theta_i^*,\theta_j^*),\rho(\theta_i^*,-\theta_j^*)\right\}.$$

Next we separate the even/odd modes of the ultraspherical expansion.
Since $\{p_{2m}\}_{m=0}^\infty$ are even and $\{p_{2m+1}\}_{m=0}^\infty$ are odd, if $k$ is even, we have
\begin{equation*}
    \begin{split}
        \pc_\mq\tilde{f}
        &=\frac{1}{2}\sul_{j=1}^na_j\sul_{m=0}^\infty\widehat{\pc_\mq\phi_k}(m)\left(p_m(\theta_j^*\cdot\circ)-p_m(-\theta_j^*\cdot\circ)\right)\\
        &=\sul_{j=1}^na_j\sul_{m=0}^\infty\widehat{\pc_\mq\phi_k}(2m+1)p_{2m+1}(\theta_j^*\cdot\circ)\\
        &=\sul_{j=1}^na_j\left(\sul_{m=0}^\infty\left|\widehat{\pc_\mq\sigma_k}(m)\right|p_m(\theta_j^*\cdot\circ)-\sul_{2m\leq k}\left|\widehat{\pc_\mq\sigma_k}(2m)\right|p_{2m}(\theta_j^*\cdot\circ)\right),
    \end{split}
\end{equation*}
and if $k$ is odd,
\begin{equation*}
    \begin{split}
        \pc_\mq\tilde{f}
        &=\frac{1}{2}\sul_{j=1}^na_j\sul_{m=0}^\infty\widehat{\pc_\mq\phi_k}(m)\left(p_m(\theta_j^*\cdot\circ)+p_m(-\theta_j^*\cdot\circ)\right)\\
        &=\sul_{j=1}^na_j\sul_{m=0}^\infty\widehat{\pc_\mq\phi_k}(2m)p_{2m}(\theta_j^*\cdot\circ)\\
        &=\sul_{j=1}^na_j\left(\sul_{m=0}^\infty\left|\widehat{\pc_\mq\sigma_k}(m)\right|p_m(\theta_j^*\cdot\circ)-\sul_{2m+1\leq k}\left|\widehat{\pc_\mq\sigma_k}(2m+1)\right|p_{2m+1}(\theta_j^*\cdot\circ)\right).
    \end{split}
\end{equation*}

Introduce the low-degree correction
\begin{equation}
    p=\left\{\begin{array}{ll}
        \sul_{j=1}^n a_j \sul_{2m\leq k}\left|\widehat{\pc_\mq\sigma_k}(2m)\right|\,p_{2m}(\theta_j^*\cdot\circ), &\hbox{if $k$ is even},  \\
        \sul_{j=1}^n a_j \sul_{2m+1\leq k}\left|\widehat{\pc_\mq\sigma_k}(2m+1)\right|\,p_{2m+1}(\theta_j^*\cdot\circ), &\hbox{if $k$ is odd}.
    \end{array}\right.
\end{equation}
By parity, $\pc_\mq\tilde f\perp p$, hence
\begin{equation*}
    \begin{split}
        \big\|\pc_\mq\tilde f+p\big\|_{\mH}^2
        =&\big\|\pc_\mq\tilde f\big\|_{\mH}^2+\left\|p\right\|_{\mH}^2
        \lesssim \underline{h}^{2s-2r}\|\pc_\mq\tilde f\|_{\mathcal{H}^s(\SS^d)}^2+\left\|p\right\|_{\mH}^2\\
        \lesssim&\underline{h}^{2s-2r}\left(\|\pc_\mq\tilde f\|_{\mathcal{H}^s(\SS^d)}^2+\left\|p\right\|_{\mathcal{H}^s(\SS^d)}^2\right)
        =\underline{h}^{2s-2r}\big\|\pc_\mq\tilde f+p\big\|_{\mathcal{H}^s(\SS^d)}^2.
    \end{split}
\end{equation*}
The second inequality uses $p\in\mathbb P_k(\SS^d)$, so $\|p\|_{\mH}\eqsim\|p\|_{\mathcal H^s(\SS^d)}$.
Finally, for $\alpha=r,s$,
\begin{equation*}
    \begin{split}
        \big\|\pc_\mq\tilde f+p\big\|_{\mathcal{H}^\alpha(\SS^d)}^2
        =&\Big\|\sul_{j=1}^na_j\sul_{m=0}^\infty\big|\widehat{\pc_\mq\sigma_k}(m)\big|p_m(\theta_j^*\cdot\circ)\Big\|_{\mathcal{H}^\alpha(\SS^d)}^2\\
        =&\sul_{m=0}^\infty\widehat{\pc_\mq\sigma_k}(m)^2(m^{2\alpha}+1)\,
        a^\top\left(p_m(\theta_i^*\cdot \theta_j^*)\right)_{i,j=1}^na
        =\|\pc_\mq f_n\|_{\mathcal{H}^\alpha(\SS^d)}^2.
    \end{split}
\end{equation*}
This identifies the high-frequency norms of $\pc_\mq\sigma_k$–type networks with those of $\pc_\mq\phi_k$–type networks, and the asserted Bernstein estimate \eqref{eqn_inver_highfre} follows from \eqref{eqn:tilde_f_inverse}.

Finally, write $v_n=\mathfrak L_\beta f_n$ with $f_n\in L_n^k$. Since $\mathfrak L_\beta$ commutes with $\mathcal P_{\mathfrak q}^c$, \eqref{eqn:operator_equivalence} yields \eqref{eqn:filtered_residual_bernstein}.

\end{proof}

The Bernstein and residual Bernstein estimates established above provide the stability mechanism for the discrete least-squares analysis in the next section.

\section{Discrete residual least squares on the sphere}\label{sec_mainresult_ls}

Building on the stability estimates of Section~\ref{sec_bernstein}, we now establish deterministic and random error bounds for a residual minimizer $u_{n,m}$ in \eqref{eqn_def_collocation}. Throughout this section, $h$ and $\underline h$ denote the fill distance and antipodal separation distance of $\Theta_n$, as in Definition~\ref{def:quasiuniform_sequence}, with the index $n$ suppressed.

We assume throughout this section that the parity condition \eqref{eqn:f_evenodd_relu} holds. This is the parity associated with the nonzero high-degree spherical harmonic coefficients of $\sigma_k$; see~\cite[Remark~4.1]{liu2025integral}. Since $\mathfrak L_\beta$ preserves spherical harmonic degree, the exact solution $u=\mathfrak L_\beta^{-1}f$ has the same parity whenever $f$ satisfies \eqref{eqn:f_evenodd_relu}. A more general target of the form $f=f_{\mathrm{par}}+p$, where $f_{\mathrm{par}}$ satisfies \eqref{eqn:f_evenodd_relu} and $p\in\mathbb P_k(\mathbb S^d)$, may be treated by handling $p$ separately. We do not claim that an arbitrary function admits such a decomposition with $p\in\mathbb P_k(\mathbb S^d)$.

We first state the deterministic result, which gives the approximation rate for quasi-uniform collocation sets.

\begin{theorem}\label{thm:leastsquare}
Let $d,n\geq2$, $k\in\NN_0$, $\beta\geq0$, $k> \beta+\frac{d-1}{2}$, $r\leq\frac{d+2k+1}{2}-\beta$, $0\leq s<\min\{r,k+\frac{1}{2}-\beta\}$, $\Theta_n=\{\theta_{j,n}^*\}_{j=1}^n\subset\SS^d$, and let $\mathfrak L_\beta$ satisfy \eqref{eqn:def_operator}--\eqref{eqn:elliptic_symbol}. Then there exists a constant $C_2$ such that, whenever $m\geq C_2\underline h^{-d}$, $X=\{\eta_i^*\}_{i=1}^m$ is quasi-uniform, and $f$ satisfies \eqref{eqn:f_evenodd_relu}, the solution $u_{n,m}$ of \eqref{eqn_def_collocation} satisfies
\begin{equation}\label{eqn_main_ls_determ}
\|u-u_{n,m}\|_{\mathcal H^{s+\beta}(\SS^d)}\eqsim\|f-\mathfrak L_\beta u_{n,m}\|_{\mathcal H^s(\SS^d)}\lesssim\underline h^{-s}h^r
\begin{cases}
\|f\|_{\mathcal W^{r,p}(\SS^d)},&\frac dp<r\leq \frac{d}{2},\\
\|f\|_{\mathcal H^r(\SS^d)},&r>\frac{d}{2}.
\end{cases}
\end{equation}
The implied constant is independent of $n,m$, $\Theta_n$, and $X$. If $\{\Theta_n\}_{n\geq1}$ is antipodally quasi-uniform and $m=\lceil C_2\underline h^{-d}\rceil$, then
\begin{equation}
\|u-u_{n,m}\|_{\mathcal H^{s+\beta}(\SS^d)}\eqsim\|f-\mathfrak L_\beta u_{n,m}\|_{\mathcal H^s(\SS^d)}\lesssim n^{-\frac{r-s}{d}}
\begin{cases}
\|f\|_{\mathcal W^{r,p}(\SS^d)},&\frac dp<r\leq \frac{d}{2},\\
\|f\|_{\mathcal H^r(\SS^d)},&r>\frac{d}{2}.
\end{cases}
\end{equation}
\end{theorem}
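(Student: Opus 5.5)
The plan is the standard ``approximation $+$ sampling stability'' argument, with the stability constant supplied by the residual Bernstein inequality \eqref{eqn:residual_bernstein}. Write $V_{n,\beta}=\mathfrak L_\beta L_n^k(\Theta_n)$ and the discrete seminorm $\|g\|_X^2=\frac1m\sum_{i=1}^m g(\eta_i^*)^2$. Then $\mathfrak L_\beta u_{n,m}$ is the $\|\cdot\|_X$-orthogonal projection of $f$ onto $V_{n,\beta}$. The equivalence $\|u-u_{n,m}\|_{\mathcal H^{s+\beta}}\eqsim\|f-\mathfrak L_\beta u_{n,m}\|_{\mathcal H^s}$ is immediate from \eqref{eqn:operator_equivalence}. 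It therefore remains to bound the residual.

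\textbf{Step 1 (discrete norm equivalence).} For quasi-uniform $X$ with mesh $h_X\eqsim m^{-1/d}$, I will use a Marcinkiewicz--Zygmund/sampling inequality of the form
\[
\bigl|\,\|g\|_{\mathcal L^2}^2-\|g\|_X^2\bigr|\lesssim h_X^{\tau}\|g\|_{\mathcal H^\tau}\|g\|_{\mathcal L^2}+h_X^{2\tau}\|g\|_{\mathcal H^\tau}^2,
\]
with $\frac d2<\tau<k+\frac12-\beta$. Such a $\tau$ exists because $k>\beta+\frac{d-1}{2}$. The inequality comes from a local polynomial reproduction or sampling inequality on caps (Narcowich--Ward--Wendland/Arcangeli type).

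Apply this to $g\in V_{n,\beta}$ and use \eqref{eqn:residual_bernstein} with $s=0$ to get $\|g\|_{\mathcal H^\tau}\lesssim\underline h^{-\tau}\|g\|_{\mathcal L^2}$. This yields
\[
\tfrac12\|g\|_{\mathcal L^2}^2\le\|g\|_X^2\le 2\|g\|_{\mathcal L^2}^2,\qquad g\in V_{n,\beta},
\]
once $h_X/\underline h$ is small, i.e.\ once $m\ge C_2\underline h^{-d}$. This is exactly where the Bernstein inequality enters. It also shows that the least-squares problem is well posed on $V_{n,\beta}$.

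\textbf{Step 2 (quasi-best approximation in discrete norm).} Take $v^*\in V_{n,\beta}$ to be a good approximant of $f$. I will use the $\mathcal L^\infty$ or $\mathcal H^{\tau}$-controlled quasi-interpolant from \cite{liu2025integral} applied to $u$ and then mapped by $\mathfrak L_\beta$. It should satisfy
\[
\|f-v^*\|_{\mathcal H^s}\lesssim h^{r-s}\|f\|,\qquad \|f-v^*\|_{\mathcal L^\infty}\lesssim h^{r-d/p}\|f\|_{\mathcal W^{r,p}}
\]
(resp.\ $h^{r-d/2}\|f\|_{\mathcal H^r}$). Rather than $\mathcal L^\infty$, it is better to use the sampled bound $\|f-v^*\|_X\lesssim\|f-v^*\|_{\mathcal L^2}+h_X^{\tau'}|f-v^*|_{\mathcal H^{\tau'}}$ or its $\mathcal W^{r,p}$ analogue with $\tau'>d/p$. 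This gives $\|f-v^*\|_X\lesssim h^r\|f\|$ when $h_X\lesssim h$.

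Then $\|v^*-\mathfrak L_\beta u_{n,m}\|_X\le\|f-v^*\|_X$ by the projection property. By Step 1, $\|v^*-\mathfrak L_\beta u_{n,m}\|_{\mathcal L^2}\lesssim h^r\|f\|$.

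\textbf{Step 3 (lift to $\mathcal H^s$).} Write
\[
f-\mathfrak L_\beta u_{n,m}=(f-v^*)+(v^*-\mathfrak L_\beta u_{n,m}).
\]
The first term is bounded by $h^{r-s}\|f\|\le\underline h^{-s}h^r\|f\|$. For the second term, apply \eqref{eqn:residual_bernstein} with exponents $(s,0)$: $\|v^*-\mathfrak L_\beta u_{n,m}\|_{\mathcal H^s}\lesssim\underline h^{-s}h^r\|f\|$. This proves \eqref{eqn_main_ls_determ}. The quasi-uniform case follows from $h\eqsim\underline h\eqsim n^{-1/d}$.

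\textbf{Main obstacle.} The delicate point is the sampled approximation error $\|f-v^*\|_X$ in the low-smoothness regime $\frac dp<r\le\frac d2$. Here $f$ is only continuous, so the pointwise error must be controlled through $\mathcal W^{r,p}$ sampling inequalities, and the approximation result must come with the simultaneous $\mathcal W^{\tau',p}$-stability $|f-v^*|_{\mathcal W^{\tau',p}}\lesssim h^{r-\tau'}\|f\|_{\mathcal W^{r,p}}$. I would first try to extract this from the integral-representation construction of \cite{liu2025integral} via Jackson-type estimates in $\mathcal L^p$.
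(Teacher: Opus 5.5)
Your overall architecture is the paper's: comparison network $v^*=\mathfrak L_\beta u_n$, stability of the empirical norm on $V_{n,\beta}$ through \eqref{eqn:residual_bernstein} once $m\ge C_2\underline h^{-d}$, empirical minimality, and a final Bernstein step from $\mathcal L^2$ to $\mathcal H^s$. For $r>\frac d2$ your Step~2 is complete and slightly more direct than the paper. Apply Lemma~\ref{lem_discrete_by_cont} with $p=2$ to $f-v^*$, using the simultaneous $\mathcal L^2$ and $\mathcal H^{\tau'}$ bounds of Theorem~\ref{thm:main_sphere} for $\frac d2<\tau'<\min\{r,k+\frac12-\beta,\frac d2+1\}$. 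This gives $\|f-v^*\|_X\lesssim h^r\|f\|_{\mathcal H^r}$ without the filtered Bernstein inequality.

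Your Step~1 inequality, however, is false as stated. For an arbitrary quasi-uniform $X$, the equal-weight average $\frac1m\sum_i g(\eta_i^*)^2$ need not converge to $\|g\|_{\mathcal L^2}^2$: if the points are twice as dense near the poles as near the equator, it converges to a weighted norm. So no defect bound vanishing with $h_X$ can hold, and neither can universal constants $\frac12,2$. What is true, and all you need, are the two one-sided bounds. Lemma~\ref{lem_discrete_by_cont} gives the upper bound, and Lemma~\ref{lem_cont_by_discrete} (or the MZ-plus-tail estimate \eqref{eqn:L2_by_discrete}) gives the lower bound. Together with \eqref{eqn:residual_bernstein} they yield $\|g\|_X\eqsim\|g\|_{\mathcal L^2}$ on $V_{n,\beta}$, with constants depending on the quasi-uniformity of $X$.

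The genuine gap is the branch $\frac dp<r\le\frac d2$, which you leave open. The route you propose, a $\mathcal W^{\tau',p}$ Jackson estimate for the network approximant, is not available here and is not needed. Handling $f-v^*$ as a whole in the $\mathcal H$-scale cannot work there, because $\mathcal H^t$ bounds with $t<r\le\frac d2$ do not control point values; the only $\mathcal W^{\tau,p}$ information you have is on $f$, not on the network. The missing idea is to place the rough part of $f$ and the network on separate pieces.

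The paper does this as follows. Choose $\mq$ with $2^{-\mq}\eqsim h$ and $2^\mq\underline h\le c_*$. The low-degree identity \eqref{eqn:trunc_equal} gives $\mathcal P_\mq f=\mathcal P_\mq\mathfrak L_\beta u_n$, hence $f-\mathfrak L_\beta u_n=\pc_\mq f-\pc_\mq\mathfrak L_\beta u_n$.
\begin{itemize}
\item The piece $\pc_\mq f$ is sampled by Lemma~\ref{lem_discrete_by_cont} in $\mathcal W^{\tau,p}$ and bounded by the classical polynomial estimate Lemma~\ref{lem_sharp_appr_poly}; no network is involved.
\item For $\pc_\mq\mathfrak L_\beta u_n$, Lemma~\ref{lem_discrete_by_cont} in $\mathcal H^\tau$ and the filtered Bernstein inequality \eqref{eqn:filtered_residual_bernstein} reduce its discrete norm to $\|\pc_\mq\mathfrak L_\beta u_n\|_{\mathcal L^2}\le\|\pc_\mq f\|_{\mathcal L^2}+\|f-\mathfrak L_\beta u_n\|_{\mathcal L^2}\lesssim h^r\|f\|_{\mathcal H^r}$. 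So only $\mathcal L^2$-based approximation of the network is ever used.
\end{itemize}

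You can also close the gap within your own framework. Take $v^*$ to be $\mathfrak L_\beta$ applied to the Theorem~\ref{thm:main_sphere} approximant of the smoothed solution $\mathfrak L_\beta^{-1}\mathcal P_\mq f$ (which keeps the parity of $f$), at top smoothness $\frac{d+2k+1}{2}$. Set $\rho=\frac{d+2k+1}{2}-\beta$. The polynomial inverse inequality $\|\mathcal P_\mq f\|_{\mathcal H^\rho}\lesssim 2^{\mq(\rho-r)}\|f\|_{\mathcal H^r}$ then gives $\|\mathcal P_\mq f-v^*\|_{\mathcal H^t}\lesssim h^{r-t}\|f\|_{\mathcal H^r}$ for $0\le t<k+\frac12-\beta$. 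Both terms in $\|f-v^*\|_X\le\|\pc_\mq f\|_X+\|\mathcal P_\mq f-v^*\|_X$ are then $\lesssim h^r\|f\|_{\mathcal W^{r,p}}$.
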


The pointwise empirical loss creates a regularity distinction that is absent from the continuous approximation theorem. When $r\leq \frac{d}{2}$, we assume $f\in\mathcal W^{r,p}$ in order to control the sampled high-frequency tail; for $r>\frac{d}{2}$, the Hilbert Sobolev assumption $f\in\mathcal H^r$ suffices.

\begin{remark}
The pure fractional Laplace--Beltrami operator $(-\Delta)^{\beta/2}$, $\beta>0$, has the constants as its kernel and therefore does not satisfy \eqref{eqn:elliptic_symbol} at degree $m=0$. Nevertheless, the conclusion of Theorem~\ref{thm:leastsquare} remains valid on the mean-zero subspace, provided that $\int_{\mathbb S^d}f=0$, the exact solution is normalized by $\int_{\mathbb S^d}u=0$, and the discrete least-squares problem is solved under the constraint $\int_{\mathbb S^d}u_{n,m}=0$. Without the discrete mean-zero constraint, the solution error can only be controlled modulo constants:
\begin{equation*}
\inf_{c\in\mathbb R}\|u-u_{n,m}-c\|_{\mathcal H^{s+\beta}(\mathbb S^d)}
\lesssim\|f-(-\Delta)^{\beta/2}u_{n,m}\|_{\mathcal H^s(\mathbb S^d)}.
\end{equation*}
\end{remark}

We next state the corresponding near-optimal result for collocation points sampled independently from the uniform distribution on $\SS^d$.

\begin{theorem}\label{thm:leastsquare_random}
Let $d,n\geq2$, $k\in\NN_0$, $\beta\geq0$, $k>\beta+\frac{d-1}{2}$, and let $\Theta=\{\Theta_n\}_{n\geq1}$ be an antipodally quasi-uniform sequence. Assume that $r+\beta\leq\frac{d+2k+1}{2}$, and $f\in\mathcal H^r(\SS^d)$ satisfies \eqref{eqn:f_evenodd_relu}. For
\begin{equation*}
0<\epsilon<\min\left\{r,\frac d4,k+\frac12-\beta\right\},\qquad0<\delta<\frac12,
\end{equation*}
there exists $C_3>0$ such that, whenever
\begin{equation*}
m\geq C_3\max\left\{n\log\frac n\delta,\delta^{-\frac{d}{2\epsilon}}\right\},
\end{equation*}
the i.i.d.\ uniform samples $X=\{\eta_i^*\}_{i=1}^m$ yield, with probability at least $1-2\delta$,
\begin{equation}\label{eqn:random_residual_result}
\|u-u_{n,m}\|_{\mathcal H^\beta(\SS^d)}\eqsim\|f-\mathfrak L_\beta u_{n,m}\|_{\mathcal L^2(\SS^d)}\lesssim\sqrt{\log\frac m\delta}\,n^{-\frac{r-\epsilon}{d}}\|f\|_{\mathcal H^r(\SS^d)}.
\end{equation}
\end{theorem}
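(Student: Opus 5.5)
The plan is to follow the standard discrete least-squares route: an empirical norm equivalence on the residual space, then a quasi-optimality argument, and finally the continuous approximation rate. Set $V=V_{n,\beta}=\mathfrak L_\beta L_n^k(\Theta_n)$, of dimension at most $n$. Write $\|g\|_X^2=\frac1m\sum_i g(\eta_i^*)^2$ for the empirical norm. By \eqref{eqn:operator_equivalence}, $\|u-u_{n,m}\|_{\mathcal H^\beta}\eqsim\|f-\mathfrak L_\beta u_{n,m}\|_{\mathcal L^2}$, so it suffices to bound the residual in $\mathcal L^2$.

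\textbf{Step 1 (norm equivalence).} I will show that, with probability at least $1-\delta$, $\frac12\|g\|_{\mathcal L^2}^2\le\|g\|_X^2\le\frac32\|g\|_{\mathcal L^2}^2$ for all $g\in V$, provided $m\gtrsim n\log(n/\delta)$. The tool is matrix Chernoff (Tropp) applied to an $\mathcal L^2$-orthonormal basis $\{e_j\}$ of $V$. The required input is the Christoffel-type bound $K_V:=\sup_\eta\sum_j e_j(\eta)^2\lesssim n$. To obtain it, fix $\tau$ with $d/2<\tau<k+\frac12-\beta$. Sobolev embedding combined with the residual Bernstein inequality \eqref{eqn:residual_bernstein} (taking $s=0$, $r=\tau$) gives $\|g\|_\infty\lesssim\|g\|_{\mathcal H^\tau}\lesssim\underline h^{-\tau}\|g\|_{\mathcal L^2}$. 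This yields only $K_V\lesssim\underline h^{-2\tau}\eqsim n^{2\tau/d}$, which is too crude. I will therefore sharpen it with a localized Nikolskii argument. Split $g=\mathcal P_{\mathfrak q}g+\mathcal P^c_{\mathfrak q}g$ with $2^{\mathfrak q}\eqsim\underline h^{-1}$. The polynomial part satisfies $\|\mathcal P_{\mathfrak q}g\|_\infty\lesssim 2^{\mathfrak q d/2}\|g\|_2\eqsim n^{1/2}\|g\|_2$. For the high part, I will use the filtered Bernstein inequality \eqref{eqn:filtered_residual_bernstein} together with the spectral decay of $\mathcal P^c_{\mathfrak q}g$ through dyadic blocks (Sobolev embedding blockwise, $\|\text{block}_q\|_\infty\lesssim 2^{qd/2}\|\text{block}_q\|_2$, and the decay of $\xi_\alpha$ beyond $\tilde q$ from the diagonal dominance \eqref{eqn:dig_dom_matrix}). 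This should give $\|g\|_\infty\lesssim n^{1/2}\|g\|_2$. I expect this step to be the main obstacle; if the sharp Nikolskii bound is hard to obtain, the fallback is to accept the $\epsilon$-loss already present in the statement.

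\textbf{Step 2 (quasi-optimality).} Let $g^*\in V$ be the continuous best approximation of $f$, i.e.\ $g^*=\mathfrak L_\beta u_n^{\mathrm{cont}}$. Since $u_{n,m}$ minimizes the empirical norm, on the good event we have
\begin{equation*}
\|g^*-\mathfrak L_\beta u_{n,m}\|_{\mathcal L^2}\le\sqrt2\,\|g^*-\mathfrak L_\beta u_{n,m}\|_X\le 2\sqrt2\,\|f-g^*\|_X.
\end{equation*}
Hence $\|f-\mathfrak L_\beta u_{n,m}\|_{\mathcal L^2}\lesssim\|f-g^*\|_{\mathcal L^2}+\|f-g^*\|_X$.

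\textbf{Step 3 (error terms).} The first term is $\lesssim h^r\|f\|_{\mathcal H^r}\eqsim n^{-r/d}\|f\|_{\mathcal H^r}$ by \cite{liu2025integral} (via $\mathfrak L_\beta^{-1}f\in\mathcal H^{r+\beta}$ and parity). The difficulty is the empirical term: $f-g^*$ need not be continuous when $r\le d/2$. I will set $e=f-g^*$ and decompose $e=\mathcal P_{\mathfrak p}e+\mathcal P^c_{\mathfrak p}e$ with $2^{\mathfrak p}$ of order $m^{1/d}$ or larger.

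For the smooth part, I will apply Bernstein/Hoeffding to the bounded variables $(\mathcal P_{\mathfrak p}e)^2(\eta_i^*)$. With $\|\mathcal P_{\mathfrak p}e\|_\infty\lesssim 2^{\mathfrak p(d/2-r+\epsilon)}n^{-\epsilon/d}\cdots$, this produces the $\sqrt{\log(m/\delta)}$ factor.

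For the tail, I will use the crude Markov bound $\mathbb E\|\mathcal P^c_{\mathfrak p}e\|_X^2=\|\mathcal P^c_{\mathfrak p}e\|_2^2\lesssim 2^{-2\mathfrak p(r)}\|f\|_{\mathcal H^r}^2$. For $g^*$, the tail is controlled by Step 1 and the Bernstein inequality at order $\epsilon$, giving $\|\mathcal P^c g^*\|\lesssim 2^{-\mathfrak p\epsilon}\underline h^{-\epsilon}$. A Markov argument with probability $\delta$ then requires $m\gtrsim\delta^{-d/(2\epsilon)}$, which is exactly the condition in the statement. This tail step costs the $n^{\epsilon/d}$ loss.

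A union bound over the two events (Step 1 and the empirical-term event) gives the result with probability at least $1-2\delta$.
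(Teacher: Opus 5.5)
Your route is different from the paper's in both halves. Step 1 and Step 2 hold. Step 3 has a real error in the tail bound; it can be repaired with tools the paper already provides.

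\textbf{How the two routes compare.} The paper never proves a two-sided norm equivalence on $V_{n,\beta}$. On a covering event where $h_X\lesssim(\log(m/\delta)/m)^{1/d}$, it combines the deterministic sampling inequality (Lemma~\ref{lem_cont_by_discrete}) with \eqref{eqn:residual_bernstein}. Since only $mh_X^d\lesssim\log(m/\delta)$ is available, this gives $\|g\|_{\mathcal L^2}\lesssim\sqrt{\log(m/\delta)}\,\|g\|_{\ell^2(X)}$, and that is the source of the logarithm in \eqref{eqn:random_residual_result}. For the fixed comparator error $e_n=f-\mathfrak L_\beta u_n$, the paper uses Lemma~\ref{lem_rand_l2Lp} with $p=2d/(d-2\epsilon)$ and the embedding $\mathcal H^\epsilon\hookrightarrow\mathcal L^p$, which fails with probability $m^{-2\epsilon/d}$.

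Your matrix-Chernoff route gives stability with absolute constants at the same sample size. The logarithm then enters only through concentration of the band-limited part of $e$, so you end with the slightly sharper bound $(\sqrt{\log(1/\delta)}+n^{\epsilon/d})\,n^{-r/d}\|f\|_{\mathcal H^r}$. Your tail-plus-Markov step plays the same role as the paper's moment bound and leads to the same $\delta^{-d/(2\epsilon)}$ condition.

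\textbf{Step 1 is easier than you expect.} Take $2^{\mathfrak q}\eqsim\underline h^{-1}$ with $2^{\mathfrak q}\underline h\le c_*$, and fix $\tau\in(\frac d2,k+\frac12-\beta)$; such a $\tau$ exists because $k>\beta+\frac{d-1}{2}$. Each dyadic block $B_qg$ with $q>\mathfrak q$ satisfies $\|B_qg\|_\infty\lesssim 2^{qd/2}\|B_qg\|_{\mathcal L^2}\lesssim 2^{q(d/2-\tau)}|\mathcal P^c_{\mathfrak q}g|_{\mathcal H^\tau}$. By \eqref{eqn:filtered_residual_bernstein}, $|\mathcal P^c_{\mathfrak q}g|_{\mathcal H^\tau}\lesssim\underline h^{-\tau}\|g\|_{\mathcal L^2}$. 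Summing over $q$ gives $\|g\|_\infty\lesssim\underline h^{-d/2}\|g\|_{\mathcal L^2}$, hence $K_V\lesssim n$, with no need to reopen the diagonal-dominance argument.

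Your stated fallback does not prove the theorem. A crude bound $K_V\lesssim n^{2\tau/d}$ forces $m\gtrsim n^{2\tau/d}\log(n/\delta)$, which the hypothesis does not give you; the $\epsilon$ in the statement is a loss in rate, not in sample size. The valid fallback is the paper's covering argument.

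\textbf{The error in Step 3.} The bound $\|\mathcal P^c_{\mathfrak p}e\|_{\mathcal L^2}\lesssim 2^{-\mathfrak p r}\|f\|_{\mathcal H^r}$ holds for $\mathcal P^c_{\mathfrak p}f$, not for $e=f-g^*$. If you estimate $\mathcal P^c_{\mathfrak p}g^*$ on its own by Bernstein at order $\epsilon$, you only get $(2^{\mathfrak p}\underline h)^{-\epsilon}\|g^*\|_{\mathcal L^2}\eqsim(n/m)^{\epsilon/d}\|f\|_{\mathcal L^2}$. This is not small relative to $n^{-(r-\epsilon)/d}$, and Markov at level $\delta$ would need $m\gtrsim\delta^{-d/(2\epsilon)}n^{r/\epsilon}$.

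The fix is to bound the tail of $e$ as a whole. First, $\|\mathcal P^c_{\mathfrak p}e\|_{\mathcal L^2}\le 2^{-\mathfrak p\epsilon}|e|_{\mathcal H^\epsilon}$. Then
\begin{equation*}
|e|_{\mathcal H^\epsilon}\le\|f-\mathfrak L_\beta u_n\|_{\mathcal H^\epsilon}+\|\mathfrak L_\beta u_n-g^*\|_{\mathcal H^\epsilon}\lesssim h^{r-\epsilon}\|f\|_{\mathcal H^r}+\underline h^{-\epsilon}\big(\|f-\mathfrak L_\beta u_n\|_{\mathcal L^2}+\|f-g^*\|_{\mathcal L^2}\big)\lesssim n^{-\frac{r-\epsilon}{d}}\|f\|_{\mathcal H^r},
\end{equation*}
using Theorem~\ref{thm:main_sphere} and \eqref{eqn:residual_bernstein} applied to $\mathfrak L_\beta u_n-g^*\in V_{n,\beta}$. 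Alternatively, compare with $u_n$ instead of $u_n^{\mathrm{cont}}$, as the paper does. Markov at level $|e|_{\mathcal H^\epsilon}$ then fails with probability at most $2^{-2\mathfrak p\epsilon}\lesssim m^{-2\epsilon/d}$.

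\textbf{Minor corrections.}
\begin{itemize}
\item For the band-limited part, take $2^{\mathfrak p}\eqsim m^{1/d}$ exactly, not ``or larger''. Scalar Bernstein concentration needs $\|\mathcal P_{\mathfrak p}e\|_\infty^2\lesssim 2^{\mathfrak pd}\|e\|_{\mathcal L^2}^2\lesssim m\|e\|^2_{\mathcal L^2}$, and this yields $\|\mathcal P_{\mathfrak p}e\|_{\ell^2(X)}\lesssim\sqrt{\log(1/\delta)}\,\|e\|_{\mathcal L^2}$. This should replace your garbled sup-norm bound.
\item There are three events, not two: Chernoff, band-limited concentration, and the tail. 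Run the last two at level $\delta/2$ and absorb the change into $C_3$.
\end{itemize}
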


The remainder of this section proves Theorems~\ref{thm:leastsquare} and \ref{thm:leastsquare_random}. The deterministic argument adapts discrete least-squares ideas for polynomial spaces \cite{cohen2017optimal} to the linearized ReLU$^k$ spaces on $\SS^d$. Its main ingredients are spherical sampling inequalities, the residual Bernstein estimates of Section~\ref{sec_bernstein}, and a Marcinkiewicz--Zygmund inequality for spherical polynomials. The random argument follows the same comparison-and-stability strategy, replacing deterministic norm control by a random covering estimate and an empirical moment bound \cite{wainwright2019high,vershynin2018high,mhaskar2001spherical}.

\subsection{Auxiliary results}
For brevity, write $X=\{\eta_i^*\}_{i=1}^m$ and define
\begin{equation}
    \|g\|_{\ell^2(X)}:=\Big(\frac{1}{m}\sul_{i=1}^mg(\eta_i^*)^2\Big)^{\frac{1}{2}},\qquad g\in\mathcal{C}(\SS^d).
\end{equation}

The proofs use a comparison-network theorem together with polynomial approximation, sampling, and empirical-norm estimates. We collect these results here for convenient reference.

We first recall the approximation result from~\cite{liu2025integral} that supplies the comparison network used below. The cited theorem is stated in a weaker form than the one recorded here; the additional low-degree identity and explicit construction of $u_n$ follows from \cite[(4.12)--(4.13)]{liu2025integral}.

\begin{theorem}\label{thm:main_sphere}
Let $d,n\in\NN$, $k\in\NN_0$, $r\leq\frac{d+2k+1}{2}$, and $\{\theta_j^*\}_{j=1}^n\subset\SS^d$. For any $u\in\mathcal H^r(\SS^d)$ satisfying the parity in \eqref{eqn:f_evenodd_relu}, there exists $u_n\in L_n^k$ such that
\begin{equation}
\|u-u_n\|_{\mathcal H^s(\SS^d)}\lesssim h^{r-s}\|u\|_{\mathcal H^r(\SS^d)},
\end{equation}
holds for $0\leq s<\min\{k+\frac12,r\}$ and
\begin{equation}\label{eqn:trunc_equal}
\widehat u_n(\nu,\ell)=\widehat u(\nu,\ell),\qquad \nu\leq C_1 h^{-1},
\end{equation}
where $C_1$ is a constant independent of $n$ and $\{\theta_j^*\}_{j=1}^n$.
\end{theorem}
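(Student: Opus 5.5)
The construction follows the integral-representation approach of \cite{liu2025integral}. Under the parity condition \eqref{eqn:f_evenodd_relu}, the plan is first to write $u$ as a superposition of ridge functions,
\[
u(\eta)=\int_{\SS^d}g(\theta)\sigma_k(\theta\cdot\eta)\,d\theta+(\text{low-degree part}),
\]
where the density $g$ is defined spectrally by $\widehat g(\nu,\ell)=\widehat u(\nu,\ell)/\widehat{\sigma_k}(\nu)$ for $\nu\ge k+1$ with $\nu\equiv k+1 \pmod 2$. The polynomial part of degree at most $k$ is represented exactly, since $\sigma_k$ has nonzero coefficients in all degrees $0,\dots,k$. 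The decay in \eqref{eqn:hat_sigma_large}, $|\widehat{\sigma_k}(\nu)|\eqsim \nu^{-(d+2k+1)/2}$, shows that $g\in\mathcal H^{r-(d+2k+1)/2}(\SS^d)$. The restriction $r\le\frac{d+2k+1}{2}$ is therefore what makes $g$ at worst an $\mathcal L^2$ function, or a distribution of bounded negative order.

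The integral is then discretized on the nodes $\{\theta_j^*\}$. Two ingredients are needed: a partition of the sphere into cells $\Omega_j\ni\theta_j^*$ of diameter $\lesssim h$, and a positive-weight quadrature $\{w_j\}$ at the nodes that is exact on $\mathbb P_{L}(\SS^d)$ with $L\eqsim h^{-1}$. Such quadrature exists by the Marcinkiewicz--Zygmund and quadrature theory of \cite{mhaskar2001spherical}. The density is split as $g=\mathcal P_{\mq}g+\mathcal P_{\mq}^c g$ with $2^{\mq}\eqsim c\,h^{-1}$.

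For the smooth part, one sets $u_n=\sum_j w_j\,(\mathcal P_{\mq}g)(\theta_j^*)\,\sigma_k(\theta_j^*\cdot\circ)$ plus the exact low-degree correction. The key observation is that, degree by degree, $\widehat{u_n}(\nu,\ell)=\widehat{\sigma_k}(\nu)\sum_j w_j(\mathcal P_\mq g)(\theta_j^*)Y_{\nu,\ell}(\theta_j^*)$. Whenever $\nu+2^{\mq+1}\le L$, the product $(\mathcal P_\mq g)Y_{\nu,\ell}$ is a polynomial of degree at most $L$. The quadrature is exact on it, so $\widehat{u_n}(\nu,\ell)=\widehat{\sigma_k}(\nu)\widehat{g}(\nu,\ell)=\widehat u(\nu,\ell)$, because $\mathcal P_\mq$ reproduces degrees $\le 2^\mq$. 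This gives \eqref{eqn:trunc_equal} with $C_1$ chosen as a small multiple of $L h$.

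It remains to bound the error, which lives in degrees $\nu\gtrsim h^{-1}$. That error consists of the truncated tail $\mathcal P_\mq^c$ part of $u$ together with the high-degree components of $u_n$. For the tail of $u$, one has $\|\cdot\|_{\mathcal H^s}\lesssim h^{r-s}\|u\|_{\mathcal H^r}$ directly. For the high-degree part of $u_n$, I would use the Gram-matrix bound $a^\top P(\nu)a\lesssim N(\nu)\,\|a\|_2^2$, where $a_j=w_j(\mathcal P_\mq g)(\theta_j^*)$, or better the localized-kernel bound through Lemma~\ref{lem_local_poly}. Combined with $|\widehat{\sigma_k}(\nu)|^2\nu^{2s}\eqsim\nu^{2s-d-2k-1}$, summed over $\nu\gtrsim h^{-1}$, this requires $s<k+\frac12$. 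The sum is controlled by $h^{2k+1-2s}\sum_j w_j^2|(\mathcal P_\mq g)(\theta_j^*)|^2\cdot h^{-d}$. A Marcinkiewicz--Zygmund inequality bounds $\sum_j w_j|\mathcal P_\mq g(\theta_j^*)|^2\lesssim\|\mathcal P_\mq g\|_{\mathcal L^2}^2$, and $w_j\lesssim h^d$. Finally, Bernstein for polynomials gives $\|\mathcal P_\mq g\|_{\mathcal L^2}\lesssim h^{-(\frac{d+2k+1}{2}-r)}\|u\|_{\mathcal H^r}$. Collecting exponents yields $h^{r-s}\|u\|_{\mathcal H^r}$.

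The main obstacle is this last step: the quasi-orthogonality of the high-degree ridge components at arbitrary (non-quasi-uniform) nodes. Only the fill distance $h$ is assumed, so the Gram matrices $P(\nu)$ are not diagonally dominant. I would handle this by merging clustered nodes, that is, putting zero weight on all but a maximal $h$-separated subset. This restores separation $\gtrsim h$ and makes the diagonal-dominance estimates from Section~\ref{sec_bernstein} applicable.
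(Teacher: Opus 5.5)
The paper does not prove this statement itself; it imports it from \cite{liu2025integral} and takes the low-degree identity from the explicit construction there. Much of your architecture is sound and consistent with that construction:
the representation $\widehat g=\widehat u/\widehat{\sigma_k}$ on the admissible degrees, the reduction to a maximal $h$-separated subset (fill distance $\le 2h$), a positive rule exact on $\mathbb P_L(\SS^d)$ with $L\eqsim h^{-1}$, the coefficients $a_j=w_j(\mathcal P_{\mathfrak q}g)(\theta_j^*)$ with $2^{\mathfrak q+2}\le L$, and the low-degree identity. One small point: build the rule on the subset itself, since zeroing the weights of a rule built on all nodes destroys exactness.

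The high-degree estimate, which is the real content, does not close as written. First, the per-degree bound $a^\top P(\nu)a\lesssim N(\nu)\|a\|_2^2$ is false even for separated nodes. $P(\nu)\succeq0$ has rank at most $N(\nu)$ and trace $n'N(\nu)$, where $n'\eqsim h^{-d}$ is the number of retained nodes. Hence $\lambda_{\max}(P(\nu))\ge n'\gg N(\nu)$ exactly in the dominant range $\nu\eqsim h^{-1}$; localization appears only after smoothing in $\nu$ over dyadic blocks. Second, your exponent tally does not give the rate. With the factor $h^{-d}$ you wrote, the bounds $\|a\|_2^2\lesssim h^d\|\mathcal P_{\mathfrak q}g\|_{\mathcal L^2}^2$ and $\|\mathcal P_{\mathfrak q}g\|_{\mathcal L^2}\lesssim h^{r-\frac{d+2k+1}{2}}\|u\|_{\mathcal H^r}$ multiply to $h^{2(r-s)-d}$, not $h^{2(r-s)}$.

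What you need is $\sum_{\nu>2^{\mathfrak q}}\nu^{2s}\widehat{\sigma_k}(\nu)^2\,a^\top P(\nu)a\lesssim h^{2k+1-2s}\|a\|_2^2$, with no $h^{-d}$. Here is how to get it.
\begin{enumerate}
\item Use $P(\nu)\succeq0$ and $\widehat{\sigma_k}(\nu)^2\le\widehat{\phi_k}(\nu)^2$ to replace the gappy coefficients by the smooth profile $\xi_s$; Lemma~\ref{lem_local_poly} applies only to that profile.
\item Insert $\sum_{q\ge\mathfrak q}\varphi(\nu/2^q)=1$ for $\nu>2^{\mathfrak q}$.
\item Bound each $a^\top Q_s(q)a$ by Gershgorin. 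For $\tilde h$-separated nodes the off-diagonal row sums are $\lesssim 2^{-q(2k+1-2s)}\bigl[(2^q\tilde h)^{-(\frac{d-1}{2}+\gamma)}+2^{-q\gamma}\bigr]$. This is comparable to the diagonal $\eqsim 2^{-q(2k+1-2s)}$ whenever $2^q\tilde h\gtrsim1$, which holds for every $q\ge\mathfrak q$ because $\tilde h\ge h$.
\end{enumerate}
Do not rely on the strict threshold $2^q\ge C_5\tilde h^{-1}$ of Section~\ref{sec_bernstein}. Exactness forces $2^{\mathfrak q}\eqsim c h^{-1}$ with $c$ small, so the blocks in between would be left uncovered. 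A blockwise large-sieve bound $\sum_{\nu\le M}a^\top P(\nu)a\lesssim (M^d+\tilde h^{-d})\|a\|_2^2$ works equally well.

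Summing over $q\ge\mathfrak q$ uses $s<k+\frac12$. Then exactness on $|\mathcal P_{\mathfrak q}g|^2$ gives $\|a\|_2^2\le\max_j w_j\,\|\mathcal P_{\mathfrak q}g\|_{\mathcal L^2}^2\lesssim h^d\|\mathcal P_{\mathfrak q}g\|_{\mathcal L^2}^2$. Combined with $\|\mathcal P_{\mathfrak q}g\|_{\mathcal L^2}^2\lesssim 2^{\mathfrak q(d+2k+1-2r)}\|u\|_{\mathcal H^r}^2$, this yields $h^{2(r-s)}\|u\|_{\mathcal H^r}^2$. That last bound is where $r\le\frac{d+2k+1}{2}$ actually enters, by letting the top degree dominate. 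It is not what makes $g$ ``at worst'' $\mathcal L^2$; $g$ is \emph{at best} in $\mathcal L^2$.
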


We next record the polynomial approximation and sampling estimates used to compare the continuous and empirical residual norms.

\begin{lemma}[Approximation by polynomials]\label{lem_sharp_appr_poly}
    Let $r>0$, $0\leq s\leq r$, $1\leq p\leq\infty$. Then for $\mq\in\NN_0$,
    \begin{equation}\label{eqn_sharp_appr_poly}
        \|u-\mP_\mq u\|_{\mathcal W^{s,p}(\SS^d)}\lesssim2^{-\mq (r-s)}\|u\|_{\mathcal{W}^{r,p}(\SS^d)}.
    \end{equation}
\end{lemma}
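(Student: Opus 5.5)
The plan is to prove \eqref{eqn_sharp_appr_poly} by a Jackson estimate for the filtered operator $\mP_\mq$, combined with a Littlewood--Paley-type dyadic decomposition of the error. The key observation is that $\mP_\mq$ is convolution with a zonal kernel whose multiplier $\sum_{q=0}^{\mq}\zeta(m/2^q)$ is smooth in the degree variable at scale $2^\mq$. By Theorem~\ref{thm_petrushev_xu_localized}, applied with $\gamma>d$ to the multiplier $A(t)=\sum_{q\le0}\zeta(2^{q}t)$ (suitably reorganised as a difference of dilates), the kernel is bounded in $\mathcal L^1(\SS^d)$ uniformly in $\mq$. Indeed, integrating the localized bound against the weight $(1-t^2)^{(d-2)/2}$ gives an $O(1)$ quantity. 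By Young's inequality on the sphere, this yields $\|\mP_\mq g\|_{\mathcal L^p}\lesssim\|g\|_{\mathcal L^p}$ for all $1\le p\le\infty$.

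Next I would use that $\mP_\mq$ reproduces $\mathbb P_{2^\mq}(\SS^d)$. Then $\|u-\mP_\mq u\|_{\mathcal L^p}\lesssim E_{2^\mq}(u)_p$, the best polynomial approximation error in $\mathcal L^p$. For the case $s=0$, write $u-\mP_\mq u=\sum_{q>\mq}\Delta_q u$, where each $\Delta_q$ is a filter with multiplier $\zeta(m/2^q)$. Each piece can be written as $\Delta_q u=\Delta_q'(-\Delta)^{-r/2}(-\Delta)^{r/2}u$, with multiplier $\zeta(m/2^q)(m(m+d-1))^{-r/2}$. This multiplier is again smooth and supported at scale $2^q$, and after rescaling it has derivatives of size $2^{-qr}$. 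The same localization argument therefore bounds its $\mathcal L^p$ operator norm by $2^{-qr}$. Summing the geometric series over $q>\mq$ gives $\|u-\mP_\mq u\|_{\mathcal L^p}\lesssim 2^{-\mq r}|u|_{\mathcal W^{r,p}}$.

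For general $s$, since $(-\Delta)^{s/2}$ commutes with $\mP_\mq$,
\[
|u-\mP_\mq u|_{\mathcal W^{s,p}}=\|(I-\mP_\mq)(-\Delta)^{s/2}u\|_{\mathcal L^p}.
\]
We again write $(-\Delta)^{s/2}=(-\Delta)^{(s-r)/2}(-\Delta)^{r/2}$ on the high-frequency range, where $\mathcal P^c_\mq$ kills all degrees up to $2^\mq$. The dyadic multipliers $\zeta(m/2^q)(m(m+d-1))^{(s-r)/2}$ then have $\mathcal L^p$ norm $\lesssim2^{-q(r-s)}$, and summing gives $2^{-\mq(r-s)}|u|_{\mathcal W^{r,p}}$. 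Adding the $\mathcal L^p$ part already obtained gives the full norm \eqref{eqn:sob_norm}, noting that $2^{-\mq r}\le2^{-\mq(r-s)}$. The case $s=r$ follows from uniform boundedness alone, and $\mq=0$ is trivial.

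The main obstacle is verifying that the rescaled multiplier $t\mapsto\zeta(t)\,(2^qt(2^qt+d-1))^{-(r-s)/2}$ has $\mathcal C^{\gamma+2d-1}$ norms on $[1/2,2]$ bounded by $C2^{-q(r-s)}$ uniformly in $q$. It also has to be confirmed that the kernel bound of Theorem~\ref{thm_petrushev_xu_localized} integrates to an $O(1)$ spherical $\mathcal L^1$ norm, including near $t=-1$. For the first point I would do a direct Leibniz computation, as in the proof of Lemma~\ref{lem_local_poly}. For the second, I would split $[-1,1]$ at $0$ and check $\int(\sqrt{1\pm t}+2^{-q})^{-\frac{d-1}{2}}(1-t^2)^{\frac{d-2}{2}}dt$ together with the factor $2^{q\frac{d+1}{2}}(1+2^q\sqrt{1-t})^{-\gamma}$. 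Alternatively, the whole lemma could simply be quoted as the standard Jackson/direct theorem for de la Vallée Poussin-type means on the sphere (e.g.\ \cite{dai2013approximation}).
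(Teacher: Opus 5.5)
Your route is genuinely different from the paper's. The paper proves nothing about kernels here. It identifies $\mP_{\mq}$ with the de la Vall\'ee Poussin operator $L_{2^{\mq}}$ of \cite{dai2013approximation} and quotes the Jackson estimate $\|u-\mP_{\mq}u\|_{\mathcal L^p}\lesssim2^{-\mq r}\|u\|_{\mathcal W^{r,p}}$ from there, using separate theorems for integer and non-integer $r$. It then treats $s>0$ by commuting $(-\Delta)^{s/2}$ with $\mP_{\mq}$ and applying the $s=0$ bound to $(-\Delta)^{s/2}u$ at smoothness $r-s$.

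You instead derive everything from the band-pass localization theorem already stated in the paper. You write $\pc_{\mq}=\sum_{q>\mq}\Delta_q$ and absorb $(m(m+d-1))^{(s-r)/2}$ into each dyadic multiplier. The rescaled multiplier is then $2^{-q(r-s)}$ times a function whose derivatives on $[1/2,2]$ are bounded uniformly in $q$. The kernel bound integrates to an $O(1)$ spherical $\mathcal L^1$ norm once $\gamma>\frac{d+1}{2}$, and the geometric series closes.

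For $0\le s<r$ this is complete and correct. It buys a self-contained argument with no integer/fractional case split and the sharper right-hand side $|u|_{\mathcal W^{r,p}}$. It also gives an explicit proof of the multiplier bound $\|(-\Delta)^{s/2}u\|_{\mathcal L^p}\lesssim\|u\|_{\mathcal W^{r,p}}$. The paper's last step uses that bound tacitly when it controls $\|(-\Delta)^{s/2}u\|_{\mathcal W^{r-s,p}}$ by $\|u\|_{\mathcal W^{r,p}}$. Your detour through $E_{2^{\mq}}(u)_p$ is never used and can be dropped.

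The one step that fails as written is the uniform $\mathcal L^p$-boundedness of $\mP_{\mq}$ in your first paragraph. You actually need it only for the endpoint $s=r$. Theorem~\ref{thm_petrushev_xu_localized} requires $\operatorname{supp}A\subset[1/2,2]$, but the low-pass multiplier $\sum_{j\ge0}\zeta(2^jt)$ equals $1$ near the origin. (Your $\sum_{q\le0}\zeta(2^qt)$ has the index reversed and equals $1$ for $t\ge1$.)

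Each band-pass piece covers only one dyadic block of degrees, so any reorganisation into such pieces needs about $\mq$ of them. Adding their $\mathcal L^1$ bounds then gives $O(\mq)$, not $O(1)$. Concretely, the pointwise bounds sum to roughly $(1-t)^{-d/2}$ on $2^{-2\mq}\lesssim1-t\lesssim1$, whose spherical integral is $\eqsim\mq$. The true uniformity comes from cancellation between the pieces, which the triangle inequality cannot see.

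You need the separate low-pass localization estimate $|K(t)|\lesssim2^{\mq d}(1+2^{\mq}\sqrt{1-t})^{-\gamma}$ for the de la Vall\'ee Poussin kernel, or directly its uniform $\mathcal L^p$-boundedness. Both are standard in \cite{dai2013approximation}. For $p=2$ the endpoint is immediate from Parseval, since the multiplier of $\pc_{\mq}$ takes values in $[0,1]$. With that citation added, your proof is complete.
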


\begin{proof}
By combining formula (2.6.4) with Theorem 4.4.2 (when $r\in\NN$) or Theorem 4.8.1 (when $r\notin\NN$) in \cite{dai2013approximation}, we obtain the $L^p$-approximation estimate for the polynomial operator $L_n$. In our setting, the operator $\mP_\mq$ coincides with $L_n$ in \cite{dai2013approximation} when $n=2^\mq$. Therefore, applying the above results with $n=2^\mq$ yields
\begin{equation*}
    \|u-\mP_\mq u\|_{\mathcal L^p(\SS^d)}\lesssim2^{-\mq r}\|u\|_{\mathcal{W}^{r,p}(\SS^d)}.
\end{equation*}

    To prove \eqref{eqn_sharp_appr_poly}, we use the fact that the operators $(-\Delta)^{s/2}$ and $\mP_\mq$ commute. Hence
    \begin{equation*}
        \begin{split}
            \|u-\mP_\mq u\|_{\mathcal W^{s,p}(\SS^d)}
            \lesssim{}&\|u-\mP_\mq u\|_{\mathcal L^p(\SS^d)}+\|(-\Delta)^{s/2}u-\mP_\mq(-\Delta)^{s/2}u\|_{\mathcal L^p(\SS^d)}\\
            \lesssim{}&2^{-\mq r}\|u\|_{\mathcal W^{r,p}(\SS^d)}+2^{-\mq(r-s)}\|(-\Delta)^{s/2}u\|_{\mathcal W^{r-s,p}(\SS^d)}\\
            \lesssim{}&2^{-\mq(r-s)}\|u\|_{\mathcal W^{r,p}(\SS^d)},
        \end{split}
    \end{equation*}
    which completes the proof.
\end{proof}

The next result is a localized version of the classical Sobolev inequality.

\begin{lemma}\label{lem:ext_cell_sobolev}
Let $d\in\NN$, $2\leq p<\infty$, $\delta\in(0,1)$, and let $A\subset\SS^d$ be a spherical cap with radius $<\frac{\pi}{4}$. Then for continuous $v\in\mathcal W^{\frac dp+\delta,p}(A)$,
\begin{equation}\label{eqn:ext_cell_sobolev}
    \operatorname{diam}(A)^{d}|v(\eta)|^p\lesssim \|v\|_{\mathcal{L}^p(A)}^p+\operatorname{diam}(A)^{d+p\delta}\|v\|_{\mathcal W^{\frac dp+\delta,p}(A)}^p,\quad\eta\in A,
\end{equation}
where the implied constant depends only on $d$, $p$, and $\delta$.
\end{lemma}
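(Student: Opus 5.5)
The plan is to reduce the estimate to a fixed reference cap by scaling, and then apply the classical Sobolev embedding there. Let $A$ have center $\eta_0$ and radius $\rho_A<\pi/4$, so $\operatorname{diam}(A)\eqsim\rho_A$. Since $\rho_A<\pi/4$, the cap sits well inside a hemisphere. There the gnomonic (or geodesic normal) chart $\Psi:B_{\rho_A}\subset\RR^d\to A$ centered at $\eta_0$ is bi-Lipschitz with constants depending only on $d$. Its derivatives of all orders are bounded uniformly, and its Jacobian is comparable to $1$. Consequently the Sobolev norm of $v$ on $A$ (taken as the restriction norm, or equivalently via a Slobodeckij-type norm in the chart) is comparable to the Euclidean $\mathcal W^{\frac dp+\delta,p}(B_{\rho_A})$ norm of $w=v\circ\Psi$. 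This holds at least for the seminorm part and up to lower-order terms bounded by the full norm, with constants depending only on $d,p,\delta$. I will take this local equivalence as the standard interpretation of $\mathcal W^{r,p}(A)$ for a cap.

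Next I rescale to the unit ball. Set $\tilde w(x)=w(\rho_A x)$ on $B_1$. The scaling identities are
\[
\|\tilde w\|_{\mathcal L^p(B_1)}^p=\rho_A^{-d}\|w\|_{\mathcal L^p(B_{\rho_A})}^p,\qquad |\tilde w|_{\mathcal W^{\frac dp+\delta,p}(B_1)}^p=\rho_A^{-d+p(\frac dp+\delta)}|w|_{\mathcal W^{\frac dp+\delta,p}(B_{\rho_A})}^p=\rho_A^{p\delta}|w|^p_{\mathcal W^{\frac dp+\delta,p}(B_{\rho_A})}.
\]
The second follows directly from the Gagliardo seminorm, or from integer-order derivatives, after change of variables.

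On the fixed Lipschitz domain $B_1$, the Sobolev embedding $\mathcal W^{\frac dp+\delta,p}(B_1)\hookrightarrow \mathcal C(\overline{B_1})$ holds because $(\frac dp+\delta)p>d$. Taking a $p$-th power, it gives $\sup|\tilde w|^p\lesssim\|\tilde w\|^p_{\mathcal L^p(B_1)}+|\tilde w|^p_{\mathcal W^{\frac dp+\delta,p}(B_1)}$. Plugging in the scaling identities and multiplying by $\rho_A^d$ yields, for every $\eta\in A$,
\[
\rho_A^d|v(\eta)|^p\lesssim\|w\|^p_{\mathcal L^p(B_{\rho_A})}+\rho_A^{d+p\delta}|w|^p_{\mathcal W^{\frac dp+\delta,p}(B_{\rho_A})}.
\]
Transferring back through $\Psi$ and bounding the seminorm by the full norm gives \eqref{eqn:ext_cell_sobolev}.

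The main obstacle is the first step: making sure that the chart comparison of the fractional seminorm on small caps has constants uniform in $\rho_A$. For a Gagliardo seminorm this is immediate from bi-Lipschitz invariance. For the integer part of the order, the chart introduces lower-order derivative terms, but these carry positive powers of $\rho_A$ after scaling and can be absorbed into the full norm on the right. The hypothesis that the radius is less than $\pi/4$ is exactly what keeps the chart geometry uniform.
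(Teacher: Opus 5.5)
Your proposal is correct and follows essentially the same argument as the paper. Both flatten the cap by a chart whose constants are uniform in the radius, rescale to the unit ball, apply the embedding $\mathcal W^{\frac dp+\delta,p}(\BB^d)\hookrightarrow\mathcal C(\overline{\BB^d})$, track the scaling factors $\rho^{-d}$ and $\rho^{p\delta}$, and then multiply by $\operatorname{diam}(A)^d$. The difference is cosmetic: the paper uses the orthogonal-projection chart $x\mapsto(x,\sqrt{1-|x|^2})$ on $\sin t\,\BB^d$ rather than gnomonic or geodesic normal coordinates, and it simply asserts the radius-uniform norm equivalence that you justify more carefully.
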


\begin{proof}
    Without loss of generality, assume that $A$ is centered at the north pole $e_{d+1}=(0,\dots,0,1)^\top$ with radius $t$. We define the natural projection of $v$ onto the $d$-dimensional disk by
    \begin{equation*}
        v^*(x)=v(x,\sqrt{1-|x|^2}),\quad x\in\sin t\BB^d=\{(x_1,\dots,x_d)^\top:~|x|\leq\sin t\}.
    \end{equation*}
    The Sobolev norms on $A$ and $\sin t\BB^d$ are equivalent, namely
    \begin{equation}\label{eqn_equiv_cap_ball}
        \|v\|_{\mathcal{W}^{r,p}(A)}\eqsim\|v^*\|_{\mathcal{W}^{r,p}(\sin t\BB^d)},
    \end{equation}
    with constants independent of $t$. Define
    $$\tilde{v}(y)=v^*(y\sin t),\quad y\in\BB^d.$$
    Now the Sobolev embedding theorem and a scaling argument give
    \begin{equation*}
        \begin{split}
            |\tilde v(y)|^p
            \leq&\|\tilde v\|_{\mathcal{L}^{\infty}(\BB^d)}^p
            \lesssim\|\tilde v\|_{\mathcal{L}^p(\BB^d)}^p
            +|\tilde v|_{\mathcal{W}^{\frac dp+\delta,p}(\BB^d)}^p\\
            \lesssim&(\sin t)^{-d}\|v^*\|_{\mathcal{L}^p(\sin t\BB^d)}^p
            +(\sin t)^{p\delta}|v^*|_{\mathcal{W}^{\frac dp+\delta,p}(\sin t\BB^d)}^p\\
            \lesssim&\operatorname{diam}(A)^{-d}\|v\|_{\mathcal{L}^p(A)}^p
            +\operatorname{diam}(A)^{p\delta}\|v\|_{\mathcal W^{\frac dp+\delta,p}(A)}^p,\quad y\in\BB^d.
        \end{split}
    \end{equation*}
    Taking
    $$y=\frac{1}{\sin t}(\eta_1,\dots,\eta_d)^\top$$
    and multiplying both sides by $\operatorname{diam}(A)^d$ completes the proof.
\end{proof}

The following lemma is an immediate consequence of Lemma~\ref{lem:ext_cell_sobolev}; it bounds the discrete norm by continuous Sobolev norms.

The following lemma is an immediate consequence of Lemma~\ref{lem:ext_cell_sobolev}; it bounds the discrete norm by continuous Sobolev norms.

\begin{lemma}[$\mathcal L^p$ discrete norm estimate]\label{lem_discrete_by_cont}
Let $2\leq p\leq\infty$ and let $X=\{\eta_i^*\}_{i=1}^m\subset\SS^d$ be quasi-uniform with fill distance $h_X$. If
\begin{equation*}
    \frac dp<\tau<\frac dp+1,
\end{equation*}
then every $v\in\mathcal W^{\tau,p}(\SS^d)$ satisfies
\begin{equation}\label{eqn:norming}
    \|v\|_{\ell^2(X)}^2\lesssim\|v\|_{\mathcal L^p(\SS^d)}^2+h_X^{2\tau}\|v\|_{\mathcal W^{\tau,p}(\SS^d)}^2.
\end{equation}
The implied constant depends only on $d$, $p$, and $\tau$.
\end{lemma}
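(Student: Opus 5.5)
The proof is a localization argument: we apply Lemma~\ref{lem:ext_cell_sobolev} on small caps centred at the sample points and sum over the caps. The structural input is quasi-uniformity of $X$. It gives a separation distance $q_X\gtrsim h_X$, and comparing the measure of the disjoint caps of radius $q_X/2$ with the total measure gives $m\eqsim h_X^{-d}$. It also guarantees that the caps $A_i$ of radius $h_X$ centred at $\eta_i^*$ have overlap bounded by a constant depending only on $d$ and the quasi-uniformity constants.

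\textbf{Trivial cases.} The case $p=\infty$ is immediate. Since $\tau>0$, $v$ is continuous, so $|v(\eta_i^*)|\le\|v\|_{\mathcal L^\infty(\SS^d)}$ for every $i$ and hence $\|v\|_{\ell^2(X)}\le\|v\|_{\mathcal L^\infty(\SS^d)}$. We may also assume $h_X\le c_0$ for a small constant $c_0$, so that each $A_i$ has radius $<\pi/4$. Otherwise $h_X\gtrsim1$ and the estimate follows from the global embedding $\mathcal W^{\tau,p}(\SS^d)\subset\mathcal C(\SS^d)$, valid because $\tau>d/p$.

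\textbf{Main estimate for $2\le p<\infty$.} Set $\delta=\tau-\frac dp\in(0,1)$ and apply \eqref{eqn:ext_cell_sobolev} on $A_i$ at the point $\eta=\eta_i^*$:
\begin{equation*}
h_X^d|v(\eta_i^*)|^p\lesssim\|v\|_{\mathcal L^p(A_i)}^p+h_X^{d+p\delta}\|v\|_{\mathcal W^{\tau,p}(A_i)}^p .
\end{equation*}
Summing over $i$, the bounded overlap gives $\sum_i\|v\|_{\mathcal L^p(A_i)}^p\lesssim\|v\|_{\mathcal L^p(\SS^d)}^p$. For the Sobolev terms we need the analogous superadditivity
\begin{equation*}
\sum_i\|v\|_{\mathcal W^{\tau,p}(A_i)}^p\lesssim\|v\|_{\mathcal W^{\tau,p}(\SS^d)}^p .
\end{equation*}
To get it, write $\tau=\lfloor\tau\rfloor+\{\tau\}$. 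Realize the local norm on $A_i$ (via the chart used in Lemma~\ref{lem:ext_cell_sobolev}) as an $\mathcal L^p$ norm of derivatives of order $\lfloor\tau\rfloor$ plus a Gagliardo double integral over $A_i\times A_i$. Both pieces are restrictions of integrals over $\SS^d$ and $\SS^d\times\SS^d$, so the bounded overlap of the $A_i$ (and hence of the $A_i\times A_i$) yields the sum bound in terms of the corresponding global chart-based norm. That global norm is then bounded by the spectral norm of Definition~\ref{def:sobolev_sphere}.

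Using $m h_X^d\eqsim 1$ and $d+p\delta=p\tau$, we obtain
\begin{equation*}
\frac1m\sum_{i=1}^m|v(\eta_i^*)|^p\lesssim\|v\|_{\mathcal L^p(\SS^d)}^p+h_X^{p\tau}\|v\|_{\mathcal W^{\tau,p}(\SS^d)}^p .
\end{equation*}
Since the normalized counting measure on $X$ is a probability measure and $p\ge2$, Jensen's (or H\"older's) inequality gives
\begin{equation*}
\|v\|_{\ell^2(X)}\le\Big(\frac1m\sum_{i=1}^m|v(\eta_i^*)|^p\Big)^{1/p}.
\end{equation*}
Taking $p$-th roots, using $(a+b)^{1/p}\le a^{1/p}+b^{1/p}$, and squaring yields \eqref{eqn:norming}.

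\textbf{Main obstacle.} The delicate point is the superadditivity step, namely the equivalence between the spectrally defined norm $\mathcal W^{\tau,p}(\SS^d)$ and a localized (chart or Gagliardo-type) norm whose restrictions to caps add up. This equivalence must hold with constants independent of the cap radius, consistent with \eqref{eqn_equiv_cap_ball}. I would invoke the standard equivalence of Bessel-potential and Triebel--Lizorkin/Besov-type localized norms on compact manifolds. If only Besov norms were available, I would exploit the fact that $\tau$ lies strictly between $d/p$ and $d/p+1$: I would run the argument with a slightly smaller exponent $\tau'\in(d/p,\tau)$ in the embedding, then use the inclusion $\mathcal W^{\tau,p}\subset B^{\tau'}_{p,p}$ to restore the required power of $h_X$.
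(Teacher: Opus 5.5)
Your proposal is correct and follows essentially the same route as the paper: apply Lemma~\ref{lem:ext_cell_sobolev} on caps of diameter $\eqsim h_X\eqsim m^{-1/d}$ about each sample point (the paper uses the smallest caps containing the Voronoi cells, you use caps of radius $h_X$), sum using bounded overlap, and pass from the normalized $\ell^p$ norm to the $\ell^2$ norm using $p\geq2$. Your separate handling of $p=\infty$ and of large $h_X$, and your justification of $\sum_i\|v\|_{\mathcal W^{\tau,p}(A_i)}^p\lesssim\|v\|_{\mathcal W^{\tau,p}(\SS^d)}^p$ via Gagliardo-type local norms, fill in points the paper simply asserts; that justification works because $p\geq2$ gives the embedding $F^{\tau}_{p,2}\hookrightarrow B^{\tau}_{p,p}$. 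Your Besov fallback with $\tau'<\tau$ would not by itself recover the factor $h_X^{\tau}$ (it only yields $h_X^{\tau'}$ without an extra frequency splitting), but it is not needed.
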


\begin{proof}
Let $\{A_i\}_{i=1}^m$ be the Voronoi partition associated with $X$. For each $i$, let $A_i^*$ be the spherical cap of smallest radius centered at $\eta_i^*$ that contains $A_i$. Then
\begin{equation*}
    \operatorname{diam}(A_i^*)\eqsim m^{-\frac1d}\eqsim h_X.
\end{equation*}
Writing $\tau=\frac dp+\delta$ with $\delta\in(0,1)$ and applying Lemma~\ref{lem:ext_cell_sobolev}, we obtain
\begin{equation*}
    \frac{1}{m}|v(\eta_i^*)|^p
    \lesssim \|v\|_{\mathcal{L}^p(A_i^*)}^p
    +m^{-\frac{p\tau}{d}}\|v\|_{\mathcal W^{\tau,p}(A_i^*)}^p.
\end{equation*}
Consequently,
\begin{equation*}
    \frac{1}{m}\sul_{i=1}^m|v(\eta_i^*)|^p
    \lesssim\sul_{i=1}^m\|v\|_{\mathcal{L}^p(A_i^*)}^p
    +m^{-\frac{p\tau}{d}}\sul_{i=1}^m\|v\|_{\mathcal W^{\tau,p}(A_i^*)}^p.
\end{equation*}
Each point $\eta\in\SS^d$ belongs to only a uniformly bounded number of caps in the collection $\{A_i^*\}_{i=1}^m$. Therefore,
\begin{equation*}
    \sul_{i=1}^m\|v\|_{\mathcal{L}^p(A_i^*)}^p\lesssim\|v\|_{\mathcal L^p(\SS^d)}^p,\qquad
    \sul_{i=1}^m\|v\|_{\mathcal W^{\tau,p}(A_i^*)}^p\lesssim\|v\|_{\mathcal W^{\tau,p}(\SS^d)}^p.
\end{equation*}
It follows that
\begin{equation*}
    \left(\frac{1}{m}\sul_{i=1}^m|v(\eta_i^*)|^p\right)^{1/p}
    \lesssim\|v\|_{\mathcal L^p(\SS^d)}
    +m^{-\frac{\tau}{d}}\|v\|_{\mathcal W^{\tau,p}(\SS^d)}.
\end{equation*}
Since $p\geq2$, the monotonicity of the normalized discrete norms gives
\begin{equation*}
    \|v\|_{\ell^2(X)}
    \leq\left(\frac{1}{m}\sul_{i=1}^m|v(\eta_i^*)|^p\right)^{1/p}.
\end{equation*}
Using $m^{-1/d}\eqsim h_X$ and squaring the resulting estimate completes the proof.
\end{proof}

The preceding lemma controls the empirical norm by continuous Sobolev norms. The converse estimate needed for stability is the following sampling inequality.

\begin{lemma}[Sampling inequality]\label{lem_cont_by_discrete}
Let $X=\{\eta_i^*\}_{i=1}^m\subset\SS^d$ have fill distance $h_X$, and let $\frac{d}{2}<\tau<\frac{d}{2}+1$. There exists $h_0>0$ such that, whenever $h_X\leq h_0$,
\begin{equation}\label{eqn:sampling_inequality}
\|v\|_{\mathcal L^2(\SS^d)}^2\lesssim h_X^d\sul_{i=1}^m|v(\eta_i^*)|^2+h_X^{2\tau}|v|_{\mathcal H^\tau(\SS^d)}^2,\qquad v\in\mathcal H^\tau(\SS^d).
\end{equation}
\end{lemma}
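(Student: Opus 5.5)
We prove the sampling inequality by working locally on the Voronoi cells of $X$, in the same spirit as the proof of Lemma~\ref{lem_discrete_by_cont}, but now bounding the $\mathcal L^2$ mass of $v$ on each cell by its value at the center plus a local Sobolev seminorm. Let $\{A_i\}_{i=1}^m$ be the Voronoi partition of $\SS^d$ associated with $X$. Each $A_i$ is contained in the cap $A_i^*$ of radius $h_X$ centered at $\eta_i^*$, and the caps $\{A_i^*\}$ have bounded overlap. For the overlap claim, note that any $\eta$ lies in $A_i^*$ only if $\rho(\eta,\eta_i^*)\le h_X$. Since the Voronoi cells cover the sphere, a volume count bounds the number of such $i$ in terms of the ratio $h_X/q_X$, where $q_X$ is the separation radius. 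If $X$ is not assumed quasi-uniform, we instead work with a maximal $h_X$-separated subset, or with a covering by caps of radius $2h_X$ centered at a net, whose overlap is bounded by a purely dimensional constant. In the presentation I would reduce to a subcollection of centers forming an $h_X$-net with bounded overlap, since the right-hand side only increases when points are added ($h_X^d\sum_i$ over a subset is at most the full sum).

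\textbf{Local estimate.} On a cap $A$ of radius $t\lesssim h_0$ containing a point $\eta_0$, I want
\begin{equation*}
\|v\|_{\mathcal L^2(A)}^2\lesssim t^d|v(\eta_0)|^2+t^{2\tau}|v|_{\mathcal H^\tau(A)}^2 .
\end{equation*}
To obtain it, pull back to the unit ball exactly as in Lemma~\ref{lem:ext_cell_sobolev}: set $\tilde v(y)=v^*(y\sin t)$. Write $\tilde v=\tilde v(y_0)+(\tilde v-\tilde v(y_0))$. Since $\tau>d/2$, the Sobolev embedding into $\mathcal C(\overline{\BB^d})$ holds, so for $w=\tilde v-\tilde v(y_0)$ we have $\|w\|_{\mathcal L^2(\BB^d)}\le\|w\|_{\mathcal L^\infty}\lesssim\|w\|_{\mathcal H^\tau(\BB^d)}$. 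Because $w(y_0)=0$ and $0<\tau-d/2<1$, a Poincaré/Morrey-type argument (compactness, with constants vanishing at $y_0$) gives $\|w\|_{\mathcal H^\tau(\BB^d)}\lesssim|w|_{\mathcal H^\tau(\BB^d)}=|\tilde v|_{\mathcal H^\tau(\BB^d)}$; here $\tau<d/2+1$ ensures the seminorm kills only constants. Scaling back then gives $\|v\|^2_{\mathcal L^2(A)}\lesssim t^d|v(\eta_0)|^2+t^{2\tau}|v|^2_{\mathcal H^\tau(A)}$, using the chart equivalence \eqref{eqn_equiv_cap_ball}. The smallness $h_X\le h_0$ is used to keep the caps below radius $\pi/4$, so that the chart equivalence holds with uniform constants.

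\textbf{Summation.} Summing over $i$ and using the bounded overlap gives
\begin{equation*}
\|v\|_{\mathcal L^2(\SS^d)}^2\le\sum_i\|v\|^2_{\mathcal L^2(A_i^*)}\lesssim h_X^d\sum_i|v(\eta_i^*)|^2+h_X^{2\tau}\sum_i|v|^2_{\mathcal H^\tau(A_i^*)}.
\end{equation*}
The main obstacle is the last step: superadditivity of the fractional seminorm over overlapping caps. For the Slobodeckij seminorm (the case $\tau$ noninteger, with $\lfloor\tau\rfloor$ derivatives followed by the double integral) this is immediate, since $\sum_i\iint_{A_i^*\times A_i^*}\le C\iint_{\SS^d\times\SS^d}$ by bounded overlap. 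One then needs the global equivalence of the Slobodeckij-type norm with the spectral $\mathcal H^\tau$ norm. That equivalence yields the full norm, so a low-order term $h_X^{2\tau}\|v\|^2_{\mathcal L^2}$ appears as well; it is absorbed into the left-hand side for $h_X\le h_0$ small. This absorption is the second place where $h_0$ is used, and it is what allows the statement to carry only the seminorm. The integer case $\tau=d/2+1/2$ etc.\ is handled identically with integer-order local seminorms.
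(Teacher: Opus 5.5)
\textbf{There is a genuine gap in your local estimate, and the argument breaks there.} Your summation step uses the top-order local seminorm: the Slobodeckij double integral over $A\times A$ of the derivatives of order $\lfloor\tau\rfloor$, or $\sum_{|\alpha|=\tau}\|D^\alpha v^*\|_{\mathcal L^2(A)}$ for integer $\tau$. That seminorm has the two properties you need: it scales like $t^{2\tau}$, and it adds up over a bounded-overlap cover.

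However, it annihilates every polynomial of degree at most $\lfloor\tau\rfloor$, not only constants. As soon as $\tau\ge1$ (in particular for every $d\ge2$, since $\tau>\frac d2$), these include linear functions, and a single point value cannot control them.

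Here is a concrete counterexample. Let $A$ be the cap of radius $t$ centered at $\eta_0=e_{d+1}$, and let $v(\eta)=\eta_1$. In the chart of Lemma~\ref{lem:ext_cell_sobolev}, $v^*(x)=x_1$. Then $v(\eta_0)=0$ and the top-order seminorm vanishes, while $\|v\|_{\mathcal L^2(A)}^2\eqsim t^{d+2}$.

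Your compactness argument fails at exactly this point. The limit function is a polynomial of degree at most $\lfloor\tau\rfloor$ vanishing at $y_0$, which need not be $0$. The restriction $\tau<\frac d2+1$ does give a Morrey bound $|w(x)-w(y)|\lesssim|x-y|^{\tau-d/2}|w|_{\dot H^\tau(\RR^d)}$. But that holds only for the global homogeneous seminorm on $\RR^d$, or equivalently a nonlocal extension seminorm on the ball. That quantity does not add up over a bounded-overlap cover, so it cannot replace the local seminorm in your summation.

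\textbf{How the paper avoids this.} It applies the Euclidean sampling inequality of Arcangeli et al.\ chartwise with a partition of unity. Proofs of that inequality control the polynomial null space by working on cells of diameter a fixed multiple of $h_X$, each containing a point set unisolvent for the polynomials the seminorm annihilates. The paper then absorbs $h_X^{2\tau}\|v\|_{\mathcal L^2}^2$ exactly as in your final step.

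\textbf{How to keep your one-point-per-cell structure.} Replace the top-order seminorm by the inhomogeneous quantity $\sum_{1\le j<\tau}|v^*|_{\mathcal H^j}^2+[v^*]_\tau^2$, where $[\cdot]_\tau$ denotes the top-order seminorm. This vanishes only on constants, so your compactness argument becomes valid. After scaling it produces additional terms $t^{2j}|v|_{\mathcal H^j(A)}^2$, which sum to $h_X^{2j}\|v\|_{\mathcal H^j(\SS^d)}^2$. Interpolation plus Young's inequality then give
\[
h_X^{2j}\|v\|_{\mathcal H^j(\SS^d)}^2\lesssim\|v\|_{\mathcal L^2(\SS^d)}^{2(1-j/\tau)}\bigl(h_X^{2\tau}\|v\|_{\mathcal H^\tau(\SS^d)}^2\bigr)^{j/\tau}\le\epsilon\|v\|_{\mathcal L^2(\SS^d)}^2+C_\epsilon h_X^{2\tau}\|v\|_{\mathcal H^\tau(\SS^d)}^2 .
\]
All $\mathcal L^2$ terms on the right are then absorbed for $h_X\le h_0$.

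Your reduction to an $h_X$-separated subset (which has fill distance at most $2h_X$) and your final absorption step are correct as they stand.
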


\begin{proof}
Apply the Euclidean sampling inequality of \cite{arcangeli2014sampling} in a fixed finite atlas and combine the chartwise estimates with a smooth partition of unity. The chart argument first produces the full $\mathcal H^\tau$-norm in the remainder. Since $\|v\|_{\mathcal H^\tau}^2=\|v\|_{\mathcal L^2}^2+|v|_{\mathcal H^\tau}^2$, the additional term $h_X^{2\tau}\|v\|_{\mathcal L^2}^2$ is absorbed into the left-hand side by decreasing $h_0$.
\end{proof}

For completeness, we also record a classical Marcinkiewicz--Zygmund inequality on the sphere; see \cite[Theorem 3.1]{mhaskar2001spherical}.

\begin{lemma}[Marcinkiewicz–Zygmund inequality for polynomials]\label{lem_MZ_L2}
Let $X=\{\eta_i^*\}_{i=1}^m$ be quasi-uniform and fill distance $h_X$. For a sufficiently small constant $c_2>0$ and every polynomial $v$ with $\deg(v)\leq c_2h_X^{-1}$,
    \begin{equation}
        \|v\|_{\ell^2(X)}\eqsim\|v\|_{\mathcal{L}^2(\SS^d)}.
    \end{equation}
\end{lemma}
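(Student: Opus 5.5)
The plan is to derive the two-sided estimate directly from the two sampling lemmas already recorded, Lemma~\ref{lem_discrete_by_cont} and Lemma~\ref{lem_cont_by_discrete}, together with the elementary Bernstein inequality for spherical polynomials. Fix $\tau\in(\frac d2,\frac d2+1)$ once and for all. If $v$ is a spherical polynomial with $\deg(v)\leq N$, Parseval \eqref{eqn:Sob_norm_Parseval} gives
\begin{equation*}
|v|_{\mathcal H^\tau(\SS^d)}^2=\sum_{\nu\leq N}\sum_{\ell=1}^{N(\nu)}\nu^{2\tau}\widehat v(\nu,\ell)^2\leq N^{2\tau}\|v\|_{\mathcal L^2(\SS^d)}^2,
\end{equation*}
and hence $\|v\|_{\mathcal H^\tau(\SS^d)}^2\leq(1+N^{2\tau})\|v\|_{\mathcal L^2(\SS^d)}^2$. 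With $N\leq c_2h_X^{-1}$, every $h_X^{2\tau}$-weighted Sobolev remainder below is therefore bounded by $c_2^{2\tau}\|v\|_{\mathcal L^2}^2$, up to a harmless $h_X^{2\tau}\|v\|_{\mathcal L^2}^2$.

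For the upper bound, apply Lemma~\ref{lem_discrete_by_cont} with $p=2$ and the chosen $\tau$. This gives
\begin{equation*}
\|v\|_{\ell^2(X)}^2\lesssim\|v\|_{\mathcal L^2}^2+h_X^{2\tau}\|v\|_{\mathcal H^\tau}^2\lesssim(1+h_X^{2\tau}+c_2^{2\tau})\|v\|_{\mathcal L^2}^2\lesssim\|v\|_{\mathcal L^2}^2,
\end{equation*}
since $h_X\leq\pi$.

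For the lower bound, first treat large fill distance separately. If $h_X>h_0$, where $h_0$ is the threshold in Lemma~\ref{lem_cont_by_discrete}, then choosing $c_2<h_0/\pi$ forces $\deg(v)<1$. So $v$ is constant, and $\|v\|_{\ell^2(X)}=\|v\|_{\mathcal L^2}$ because $d\eta$ is normalized.

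If $h_X\leq h_0$, Lemma~\ref{lem_cont_by_discrete} gives
\begin{equation*}
\|v\|_{\mathcal L^2}^2\leq C h_X^d\sum_{i=1}^m|v(\eta_i^*)|^2+C h_X^{2\tau}|v|_{\mathcal H^\tau}^2\leq C h_X^d m\|v\|_{\ell^2(X)}^2+C c_2^{2\tau}\|v\|_{\mathcal L^2}^2 .
\end{equation*}
Quasi-uniformity of $X$ gives $m\eqsim h_X^{-d}$, because disjoint caps of radius $\eqsim\tilde h_X\eqsim h_X$ around the points pack into the sphere, while caps of radius $h_X$ cover it. Hence $h_X^dm\lesssim1$. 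Choosing $c_2$ so small that $Cc_2^{2\tau}\leq\frac12$ lets the last term be absorbed into the left-hand side, which yields $\|v\|_{\mathcal L^2}\lesssim\|v\|_{\ell^2(X)}$.

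The only delicate point is the absorption step. It requires the constant in the sampling inequality to be independent of $X$ beyond $h_X$, which Lemma~\ref{lem_cont_by_discrete} provides, and it requires $c_2$ to be chosen after both that constant and $h_0$ are fixed. With this order of choices, all implied constants depend only on $d$, $\tau$, and the quasi-uniformity constant of $X$, which is the form in \cite[Theorem 3.1]{mhaskar2001spherical}.
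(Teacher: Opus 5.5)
Your proof is correct, but it takes a different route from the paper. The paper does not derive the lemma. It invokes the weighted Marcinkiewicz--Zygmund inequality of \cite[Theorem 3.1]{mhaskar2001spherical}, in which sampled values are weighted by the measures of the cells of a partition of $\SS^d$. It then observes that for quasi-uniform $X$ all cell measures are comparable to $m^{-1}$, so the weighted sum is equivalent to $\|v\|_{\ell^2(X)}^2$.

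You instead rebuild the case $p=2$ from the paper's own sampling estimates:
\begin{itemize}
\item Lemma~\ref{lem_discrete_by_cont} with $p=2$ gives the upper bound.
\item Lemma~\ref{lem_cont_by_discrete} gives the lower bound.
\item The Parseval inverse estimate $|v|_{\mathcal H^\tau(\SS^d)}\le N^\tau\|v\|_{\mathcal L^2(\SS^d)}$ absorbs the Sobolev remainder once $c_2$ is small.
\end{itemize}
Quasi-uniformity enters only through $mh_X^d\lesssim1$ and inside Lemma~\ref{lem_discrete_by_cont}. This is legitimate: neither sampling lemma uses Lemma~\ref{lem_MZ_L2}, so there is no circularity.

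What your route buys is a self-contained argument with an explicit recipe for $c_2$, which is fixed by the constant and the threshold $h_0$ of Lemma~\ref{lem_cont_by_discrete}. It is also exactly the absorption mechanism the paper uses in the proof of Theorem~\ref{thm:leastsquare_random}, with the polynomial inverse estimate in place of the ReLU$^k$ residual Bernstein inequality. It shows that, given the two sampling lemmas, Lemma~\ref{lem_MZ_L2} needs no external input.

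What the citation buys is robustness and generality. Your proof is only as solid as Lemma~\ref{lem_cont_by_discrete}, which the paper merely sketches through a chart argument. The cited theorem is a published result valid for all $1\le p\le\infty$ and for non-quasi-uniform point sets with cell weights.

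A minor point: $c_2\le h_0$ already forces $v$ to be constant when $h_X>h_0$, so the factor $1/\pi$ is unnecessary. In that regime your case split settles both directions at once.
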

The cited theorem is naturally stated with local cell weights. For a quasi-uniform set, the cell measures are comparable to $m^{-1}$, which gives the equal-weight form above.

For random collocation points, we use the following empirical moment estimate.

\begin{lemma}[Marcinkiewicz–Zygmund inequality for random variables]\label{lem_rand_l2Lp}
    Let $g\in\mathcal L^p(\SS^d)$ be fixed, and let $X = \{\eta_i^*\}_{i=1}^m$ consist of i.i.d.\ samples drawn from the uniform distribution on $\mathbb{S}^d$. Then for any $p\in(2,4)$, with probability at least $1-m^{2/p-1}$,
    \begin{equation}
        \|g\|_{\ell^2(X)}\leq\sqrt{C_p+1}\,\|g\|_{\mathcal{L}^p(\SS^d)},
    \end{equation}
    where $C_p$ depends only on $p$.
\end{lemma}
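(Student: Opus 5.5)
We want a moment bound for the empirical norm $\|g\|_{\ell^2(X)}^2=\frac1m\sum_{i=1}^m g(\eta_i^*)^2$. We will not use concentration for bounded variables, since $g^2$ need not be bounded. Instead we apply Markov's inequality to a suitable moment of a centered sum. Set
\[
Z_i:=g(\eta_i^*)^2-\|g\|_{\mathcal L^2(\SS^d)}^2 .
\]
The $Z_i$ are i.i.d. and mean zero. Put $q:=p/2\in(1,2)$. Since $d\eta$ is a probability measure, Jensen gives $\|g\|_{\mathcal L^2}\le\|g\|_{\mathcal L^p}$. Hence
\[
\mathbb E|Z_i|^q\lesssim \mathbb E|g(\eta_i^*)|^{p}+\|g\|_{\mathcal L^2}^{p}\le 2\|g\|_{\mathcal L^p}^p ,
\]
so each $Z_i$ has a finite $q$-th moment.

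The key step is a moment bound for $S=\sum_{i=1}^m Z_i$ with $1<q\le2$. We use the von Bahr--Esseen inequality, or equivalently Rosenthal/Marcinkiewicz--Zygmund with the concavity of $x\mapsto x^{q/2}$:
\[
\mathbb E|S|^q\le 2\sum_{i=1}^m\mathbb E|Z_i|^q\le C'_p\, m\,\|g\|_{\mathcal L^p}^p .
\]
Markov's inequality then gives, for $t>0$,
\[
\mathbb P\big(|S|\ge t\big)\le \frac{C'_p m\|g\|_{\mathcal L^p}^p}{t^q}.
\]
We choose $t=C_p\,m\,\|g\|_{\mathcal L^p}^2$. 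Then $t^q=C_p^q m^q\|g\|_{\mathcal L^p}^p$, and the failure probability is at most $C'_pC_p^{-q}m^{1-q}$. Since $1-q=1-p/2$, this is stated in terms of the exponent $1-p/2$. The statement instead claims $m^{2/p-1}$, and $2/p-1>1-p/2$ for $p\in(2,4)$. So the bound $m^{1-p/2}$ is stronger, and it is at most $m^{2/p-1}$ once $C_p^q\ge C'_p$. On the complementary event,
\[
\frac1m\sum_i g(\eta_i^*)^2\le\|g\|_{\mathcal L^2}^2+C_p\|g\|_{\mathcal L^p}^2\le (C_p+1)\|g\|_{\mathcal L^p}^2 ,
\]
and taking square roots gives the claim.

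If the paper's constant is meant differently, we can match the exponent $2/p-1$ directly. Take $t=C_p m\|g\|_{\mathcal L^p}^2$ and use the cruder moment order $q'=p/(p-2)\cdot(\ldots)$, or simply the monotonicity of probabilities, to weaken $m^{1-p/2}$ to $m^{2/p-1}$ after enlarging $C_p$. Either way the constant depends only on $p$.

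The main obstacle is making sure the moment inequality is valid for $q\in(1,2)$ with a constant independent of $m$. The von Bahr--Esseen inequality for independent mean-zero summands supplies exactly this, with constant $2$. A symmetrization argument plus the Khintchine inequality would also work.
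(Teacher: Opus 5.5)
Your argument is correct and follows the paper's route: the von Bahr--Esseen moment bound with $q=p/2\in(1,2)$ for the centered sum, then Markov, then the triangle inequality on the complementary event. The only difference is that you apply Markov to the $q$-th moment rather than to the first moment. The paper instead bounds $\mathbb E|Y|\lesssim m^{2/p-1}\|g\|_{\mathcal L^p}^2$ for $Y=\frac1m\sum_i Z_i$, via Lyapunov. Your choice gives the sharper failure probability $m^{1-p/2}\le m^{2/p-1}$, so your second paragraph (the garbled ``$q'$'' aside) is unnecessary and can be deleted.
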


\begin{proof}
Applying the classical Marcinkiewicz--Zygmund moment inequality for sums of independent random variables (see, e.g., \cite{vershynin2018high}), we have
\begin{equation}
    \begin{split}
    \mathbb E\Big[\big|\|g\|_{\ell^2(X)}^2-\|g\|_{\mathcal{L}^2(\SS^d)}^2\big|\Big]
    &\leq C_pm^{2/p-1}\|g^2\|_{\mathcal{L}^{p/2}(\SS^d)}
    =C_pm^{2/p-1}\|g\|_{\mathcal{L}^{p}(\SS^d)}^2,
    \end{split}
\end{equation}
where we use the Marcinkiewicz--Zygmund, equivalently von Bahr--Esseen, moment bound in the range $p/2\in(1,2)$. Markov's inequality gives
\begin{equation*}
\mathrm{Prob}\left(\big|\|g\|_{\ell^2(X)}^2-\|g\|_{\mathcal L^2(\SS^d)}^2\big|>C_p\|g\|_{\mathcal L^p(\SS^d)}^2\right)\leq m^{2/p-1}.
\end{equation*}
On the complementary event, $\|g\|_{\mathcal L^2}\leq\|g\|_{\mathcal L^p}$, up to the same fixed normalization, and therefore
\begin{equation}
    \|g\|_{\ell^2(X)}^2\leq(C_p+1)\|g\|_{\mathcal{L}^{p}(\SS^d)}^2.
\end{equation}
\end{proof}

\subsection{Proof of Theorem~\ref{thm:leastsquare}}
\begin{proof}
Let $u=\mathfrak L_\beta^{-1}f$, and let $u_n$ be the function in Theorem~\ref{thm:main_sphere} obtained by applying that theorem to $u$ with smoothness parameters $r+\beta$ and $s+\beta$. Since $\mathfrak L_\beta$ is an elliptic spectral multiplier of order $\beta$,
\begin{equation}\label{eqn_net_appr_u}
    \|\mathfrak L_\beta u_n-f\|_{\mathcal H^s(\SS^d)}
    \eqsim\|u_n-u\|_{\mathcal H^{s+\beta}(\SS^d)}
    \lesssim h^{r-s}\|u\|_{\mathcal H^{r+\beta}(\SS^d)}
    \eqsim h^{r-s}\|f\|_{\mathcal H^r(\SS^d)}.
\end{equation}
Here $\mathfrak L_\beta^{-1}$ preserves spherical harmonic degrees and hence preserves the parity condition required in Theorem~\ref{thm:main_sphere}. Let
\begin{equation*}
    g=\mathfrak L_\beta(u_{n,m}-u_n)\in V_{n,\beta}.
\end{equation*}
We first estimate $\|g\|_{\mathcal L^2(\SS^d)}$. Choose
$$
\frac d2<\tau<\min\big\{k+\frac12-\beta,\frac{d}{2}+1\big\},
$$
and set $\mathfrak p=\lfloor\log_2(c_2m^{1/d})\rfloor-1$, where $c_2$ is the constant in Lemma~\ref{lem_MZ_L2}. Then
\begin{equation}\label{eqn_samp_step1}
    \begin{split}
        \|g\|_{\mathcal L^2(\SS^d)}
        \leq&\|\mathcal P_{\mathfrak p}g\|_{\mathcal L^2(\SS^d)}
        +\|g-\mathcal P_{\mathfrak p}g\|_{\mathcal L^2(\SS^d)}\\
        \lesssim&\|\mathcal P_{\mathfrak p}g\|_{\ell^2(X)}
        +m^{-\frac{\tau}{d}}\|g\|_{\mathcal H^\tau(\SS^d)},
    \end{split}
\end{equation}
where the last inequality follows by applying Lemma~\ref{lem_MZ_L2} to $\mathcal P_{\mathfrak p}g$ and Lemma~\ref{lem_sharp_appr_poly} to $g$.

Applying Lemma~\ref{lem_discrete_by_cont} to $\pc_{\mathfrak p}g$ gives
\begin{equation}\label{eqn_est_Pc_ell2}
    \begin{split}
        \|\pc_{\mathfrak p}g\|_{\ell^2(X)}^2
        \lesssim&\|\pc_{\mathfrak p}g\|_{\mathcal L^2(\SS^d)}^2
        +m^{-\frac{2\tau}{d}}\|\pc_{\mathfrak p}g\|_{\mathcal H^\tau(\SS^d)}^2\\
        \lesssim&\left(2^{-2\mathfrak p\tau}+m^{-\frac{2\tau}{d}}\right)
        \|g\|_{\mathcal H^\tau(\SS^d)}^2
        \lesssim m^{-\frac{2\tau}{d}}\|g\|_{\mathcal H^\tau(\SS^d)}^2,
    \end{split}
\end{equation}
where the second inequality follows from Lemma~\ref{lem_sharp_appr_poly} and the boundedness of $\pc_{\mathfrak p}$ on $\mathcal H^\tau(\SS^d)$. Combining \eqref{eqn_samp_step1} and \eqref{eqn_est_Pc_ell2}, we obtain
\begin{equation}\label{eqn:L2_by_discrete}
    \begin{split}
        \|g\|_{\mathcal L^2(\SS^d)}
        \lesssim&\left(\|\pc_{\mathfrak p}g\|_{\ell^2(X)}+\|g\|_{\ell^2(X)}\right)
        +m^{-\frac{\tau}{d}}\|g\|_{\mathcal H^\tau(\SS^d)}\\
        \lesssim&\|g\|_{\ell^2(X)}+m^{-\frac{\tau}{d}}\|g\|_{\mathcal H^\tau(\SS^d)}.
    \end{split}
\end{equation}

\medskip
Since $g\in V_{n,\beta}$, the residual Bernstein estimate \eqref{eqn:residual_bernstein} from Theorem~\ref{thm:inverse} gives
\begin{equation}\label{eqn:residual_bernstein_used}
    \|g\|_{\mathcal H^\tau(\SS^d)}
    \lesssim\underline h^{-\tau}\|g\|_{\mathcal L^2(\SS^d)},
    \qquad
    \|g\|_{\mathcal H^s(\SS^d)}
    \lesssim\underline h^{-s}\|g\|_{\mathcal L^2(\SS^d)}.
\end{equation}
For $s=0$, the second inequality is understood trivially. It follows from \eqref{eqn:L2_by_discrete} that
\begin{equation*}
    \|g\|_{\mathcal L^2(\SS^d)}
    \lesssim\|g\|_{\ell^2(X)}
    +(m\underline h^d)^{-\frac{\tau}{d}}\|g\|_{\mathcal L^2(\SS^d)}.
\end{equation*}
Hence, there exists a constant $C_2>0$ such that, whenever $m\geq C_2\underline h^{-d}$, the last term can be absorbed into the left-hand side. Therefore,
\begin{equation}\label{eqn:residual_stability}
    \|g\|_{\mathcal H^s(\SS^d)}
    \lesssim\underline h^{-s}\|g\|_{\mathcal L^2(\SS^d)}
    \lesssim\underline h^{-s}\|g\|_{\ell^2(X)}.
\end{equation}
Since $u_{n,m}$ is the $\ell^2$-minimizer of $f-\mathfrak L_\beta v_n$ over $L_n^k$, we obtain
\begin{equation}\label{eqn:error_decom2}
    \begin{split}
        \|g\|_{\mathcal H^s(\SS^d)}
        \lesssim&\underline h^{-s}\|\mathfrak L_\beta(u_{n,m}-u_n)\|_{\ell^2(X)}\\
        \lesssim&\underline h^{-s}\|\mathfrak L_\beta u_{n,m}-f\|_{\ell^2(X)}
        +\underline h^{-s}\|f-\mathfrak L_\beta u_n\|_{\ell^2(X)}\\
        \leq&2\underline h^{-s}\|f-\mathfrak L_\beta u_n\|_{\ell^2(X)}.
    \end{split}
\end{equation}

\medskip
We now estimate $\|f-\mathfrak L_\beta u_n\|_{\ell^2(X)}$. Let $C_1$ be the constant in Theorem \ref{thm:main_sphere} and $c_*$ be the constant in Theorem \ref{thm_inverse_hig_freq}. Choose a fixed integer $q_0\geq1$ sufficiently large and set
\begin{equation*}
    \mq=\lfloor\log_2C_1 h^{-1}\rfloor-q_0,
\end{equation*}
so that
\begin{equation*}
    2^{\mq+1}\leq C_1 h^{-1},\qquad 2^\mq\underline h\leq c_*,\qquad 2^{-\mq}\eqsim h.
\end{equation*}
The finitely many coarse cases for which $\mq<0$ can be included by enlarging the implied constant; hence, in the remainder of the proof, we assume $\mq\geq0$. By Theorem~\ref{thm:main_sphere}, $\mathcal P_\mq u=\mathcal P_\mq u_n$. Since $\mathfrak L_\beta$ commutes with $\mathcal P_\mq$, it follows that
\begin{equation*}
    \mathcal P_\mq f=\mathcal P_\mq\mathfrak L_\beta u_n.
\end{equation*}
Consequently,
\begin{equation}\label{eqn:error_decom3}
    \begin{split}
        \|f-\mathfrak L_\beta u_n\|_{\ell^2(X)}
        \leq&\|f-\mathcal P_\mq f\|_{\ell^2(X)}
        +\|\mathcal P_\mq f-\mathfrak L_\beta u_n\|_{\ell^2(X)}\\
        =&\|\pc_\mq f\|_{\ell^2(X)}
        +\|\pc_\mq\mathfrak L_\beta u_n\|_{\ell^2(X)}.
    \end{split}
\end{equation}
The two terms are estimated separately. For $\|\pc_\mq f\|_{\ell^2(X)}$:
\begin{itemize}
    \item If $r\leq d/2$, let $2<p<\infty$ satisfy $r>d/p$ and choose
\begin{equation*}
    \frac{d}{p}<\tau<\min\left\{r,\frac{d}{p}+1\right\}.
\end{equation*}
Applying Lemma~\ref{lem_discrete_by_cont} and Lemma~\ref{lem_sharp_appr_poly} with $p$ yields
\begin{equation*}
    \begin{split}
        \|\pc_\mq f\|_{\ell^2(X)}
        &\lesssim\|\pc_\mq f\|_{\mathcal L^p(\SS^d)}+h_X^\tau\|\pc_\mq f\|_{\mathcal W^{\tau,p}(\SS^d)}\\
        &\lesssim\left(2^{-\mq r}+h_X^\tau2^{-\mq(r-\tau)}\right)\|f\|_{\mathcal W^{r,p}(\SS^d)}
        \lesssim h^r\|f\|_{\mathcal W^{r,p}(\SS^d)}.
    \end{split}
\end{equation*}
    \item If $r>d/2$, choose
    \begin{equation*}
        \frac d2<\tau_1<\min\left\{r,\frac d2+1\right\}.
    \end{equation*}
    Applying Lemmas~\ref{lem_discrete_by_cont} and \ref{lem_sharp_appr_poly}, and using $m^{-1/d}\lesssim\underline h\lesssim h$, we obtain
    \begin{equation*}
        \begin{split}
            \|\pc_\mq f\|_{\ell^2(X)}
            \lesssim&\|\pc_\mq f\|_{\mathcal L^2(\SS^d)}
            +m^{-\frac{\tau_1}{d}}\|\pc_\mq f\|_{\mathcal H^{\tau_1}(\SS^d)}\\
            \lesssim&\left(h^r+m^{-\frac{\tau_1}{d}}h^{r-\tau_1}\right)
            \|f\|_{\mathcal H^r(\SS^d)}
            \lesssim h^r\|f\|_{\mathcal H^r(\SS^d)}.
        \end{split}
    \end{equation*}
\end{itemize}
Therefore,
\begin{equation}\label{eqn_poincare1}
    \|\pc_\mq f\|_{\ell^2(X)}
    \lesssim h^r
    \left\{\begin{array}{ll}
        \|f\|_{\mathcal W^{r,p}(\SS^d)},&\frac{d}{p}<r\leq\frac d2,\\[0.3em]
        \|f\|_{\mathcal H^r(\SS^d)},&r>\frac d2.
    \end{array}\right.
\end{equation}

\medskip
For $\|\pc_\mq\mathfrak L_\beta u_n\|_{\ell^2(X)}$, we apply Lemma~\ref{lem_discrete_by_cont} and the filtered residual Bernstein inequality \eqref{eqn:filtered_residual_bernstein}:
\begin{equation}\label{eqn_poincare2}
    \begin{split}
        \|\pc_\mq\mathfrak L_\beta u_n\|_{\ell^2(X)}
        \lesssim&\|\pc_\mq\mathfrak L_\beta u_n\|_{\mathcal L^2(\SS^d)}
        +m^{-\frac{\tau}{d}}\|\pc_\mq\mathfrak L_\beta u_n\|_{\mathcal H^\tau(\SS^d)}\\
        \lesssim&\|\pc_\mq\mathfrak L_\beta u_n\|_{\mathcal L^2(\SS^d)}
        +m^{-\frac{\tau}{d}}\underline h^{-\tau}
        \|\pc_\mq\mathfrak L_\beta u_n\|_{\mathcal L^2(\SS^d)}\\
        \lesssim&\|\pc_\mq\mathfrak L_\beta u_n\|_{\mathcal L^2(\SS^d)}=\|\pc_\mq f+(\mathfrak L_\beta u_n-f)\|_{\mathcal L^2(\SS^d)}
        \lesssim h^r\|f\|_{\mathcal H^r(\SS^d)}.
    \end{split}
\end{equation}
Here the second inequality follows from \eqref{eqn:filtered_residual_bernstein}, the third one uses $m\geq C_2\underline h^{-d}$, and the equality in the last line follows from
\begin{equation*}
    \mathcal P_\mq(f-\mathfrak L_\beta u_n)=0.
\end{equation*}
The last inequality follows from Lemma~\ref{lem_sharp_appr_poly} and \eqref{eqn_net_appr_u} with $s=0$. In the case $r\leq d/2$, we also use
\begin{equation*}
    \|f\|_{\mathcal H^r(\SS^d)}\lesssim\|f\|_{\mathcal W^{r,p}(\SS^d)},\quad p\geq2.
\end{equation*}

\medskip
Combining \eqref{eqn:error_decom2}--\eqref{eqn_poincare2}, we obtain
\begin{equation}
    \begin{split}
        \|g\|_{\mathcal H^s(\SS^d)}
        \lesssim&\underline h^{-s}\left(\|\pc_\mq\mathfrak L_\beta u_n\|_{\ell^2(X)}
        +\|\pc_\mq f\|_{\ell^2(X)}\right)\\
        \lesssim&\underline h^{-s}h^r
        \left\{\begin{array}{ll}
            \|f\|_{\mathcal W^{r,p}(\SS^d)},&\frac{d}{2}<r\leq\frac d2,\\[0.3em]
            \|f\|_{\mathcal H^r(\SS^d)},&r>\frac d2.
        \end{array}\right.
    \end{split}
\end{equation}

Together with \eqref{eqn_net_appr_u}, this gives
\begin{equation}
    \begin{split}
        \|\mathfrak L_\beta u_{n,m}-f\|_{\mathcal H^s(\SS^d)}
        \leq&\|g\|_{\mathcal H^s(\SS^d)}
        +\|f-\mathfrak L_\beta u_n\|_{\mathcal H^s(\SS^d)}\\
        \lesssim&\underline h^{-s}h^r
        \left\{\begin{array}{ll}
            \|f\|_{\mathcal W^{r,p}(\SS^d)},&\frac dp<r\leq\frac d2,\\[0.3em]
            \|f\|_{\mathcal H^r(\SS^d)},&r>\frac d2.
        \end{array}\right.,
    \end{split}
\end{equation}
where we have used $\underline h\lesssim h$ to absorb the term $h^{r-s}$. This proves Theorem~\ref{thm:leastsquare}.
\end{proof}

\subsection{Proof of Theorem \ref{thm:leastsquare_random}}
\begin{proof}
Choose
\begin{equation*}
\frac d2<\tau<\min\left\{\frac d2+1,k+\frac12-\beta\right\}.
\end{equation*}
Let $u=\mathfrak L_\beta^{-1}f$ and let $u_n$ be the comparison network supplied by Theorem~\ref{thm:main_sphere}. With
\begin{equation*}
e_n:=f-\mathfrak L_\beta u_n,
\end{equation*}
the operator equivalence and Theorem~\ref{thm:main_sphere} give, for $t=0$ and $t=\epsilon$,
\begin{equation}\label{eqn:random_comparator}
\|e_n\|_{\mathcal H^t(\SS^d)}\lesssim h^{r-t}\|f\|_{\mathcal H^r(\SS^d)}\lesssim n^{-(r-t)/d}\|f\|_{\mathcal H^r(\SS^d)}.
\end{equation}

A standard covering argument on the sphere gives an event $\mathcal A$ of probability at least $1-\delta$ on which
\begin{equation}\label{eqn:random_fill}
h_X\lesssim\left(\frac{\log(m/\delta)}{m}\right)^{1/d}.
\end{equation}
Indeed, cover $\SS^d$ by $\mathcal O(t^{-d})$ caps of radius $t$, use the bound $\exp(-cmt^d)$ for the probability that a fixed cap is empty, and take $t^d$ to be a sufficiently large multiple of $\frac{1}{m}\log(\frac m\delta)$.

After increasing $C_3$ if necessary, the sample-size condition and \eqref{eqn:random_fill} ensure $h_X\leq h_0$ on $\mathcal A$, with $h_0$ as in Lemma~\ref{lem_cont_by_discrete}. On $\mathcal A$, set
\begin{equation*}
g_n:=\mathfrak L_\beta(u_{n,m}-u_n)\in V_{n,\beta}.
\end{equation*}
The sampling inequality and \eqref{eqn:residual_bernstein} yield
\begin{equation}
\begin{split}
\|g_n\|_{\mathcal L^2}^2
&\lesssim h_X^d\sul_{i=1}^m|g_n(\eta_i^*)|^2+h_X^{2\tau}\|g_n\|_{\mathcal H^\tau}^2\\
&\lesssim mh_X^d\|g_n\|_{\ell^2(X)}^2+\left(\frac{h_X}{\underline h}\right)^{2\tau}\|g_n\|_{\mathcal L^2}^2.
\end{split}
\end{equation}
Since $\underline h\eqsim n^{-1/d}$, \eqref{eqn:random_fill} gives
\begin{equation*}
\left(\frac{h_X}{\underline h}\right)^{2\tau}
\lesssim\left(\frac{n\log(m/\delta)}{m}\right)^{\frac{2\tau}{d}}.
\end{equation*}
The condition $m\geq C_3n\log(n/\delta)$ makes this coefficient sufficiently small to be absorbed when $C_3$ is increased. The same sample-size condition and \eqref{eqn:random_fill} give $mh_X^d\lesssim\log(m/\delta)$. Consequently,
\begin{equation}\label{eqn:random_stability}
\|g_n\|_{\mathcal L^2}\lesssim\sqrt{\log\frac m\delta}\,\|g_n\|_{\ell^2(X)}.
\end{equation}
By empirical minimality,
\begin{equation}\label{eqn:random_minimality}
\|g_n\|_{\ell^2(X)}\leq\|f-\mathfrak L_\beta u_{n,m}\|_{\ell^2(X)}+\|e_n\|_{\ell^2(X)}\leq2\|e_n\|_{\ell^2(X)}.
\end{equation}

Take $p=2d/(d-2\epsilon)$. The restriction $0<\epsilon<d/4$ ensures $2<p<4$. Lemma~\ref{lem_rand_l2Lp}, applied to the fixed function $e_n$, gives an event $\mathcal B$ of probability at least $1-m^{-2\epsilon/d}$ on which
\begin{equation}
\|e_n\|_{\ell^2(X)}\lesssim\|e_n\|_{\mathcal L^p(\SS^d)}\lesssim\|e_n\|_{\mathcal H^\epsilon(\SS^d)}\lesssim n^{-\frac{r-\epsilon}{d}}\|f\|_{\mathcal H^r(\SS^d)},
\end{equation}
where the middle inequality is the Sobolev embedding on $\SS^d$ and the last one is \eqref{eqn:random_comparator}. Since $m\geq C_3\delta^{-\frac{d}{2\epsilon}}$, one has $m^{-2\epsilon/d}\leq\delta$ after increasing $C_3$ if necessary. Thus $\mathcal A\cap\mathcal B$ has probability at least $1-2\delta$.

On this intersection, \eqref{eqn:random_comparator}, \eqref{eqn:random_stability}, and \eqref{eqn:random_minimality} imply
\begin{equation*}
\begin{split}
\|f-\mathfrak L_\beta u_{n,m}\|_{\mathcal L^2}
&\leq\|e_n\|_{\mathcal L^2}+\|g_n\|_{\mathcal L^2}\\
&\lesssim\sqrt{\log\frac m\delta}\,n^{-\frac{r-\epsilon}{d}}\|f\|_{\mathcal H^r},
\end{split}
\end{equation*}
which proves \eqref{eqn:random_residual_result}. The solution estimate follows from \eqref{eqn:operator_equivalence}.
\end{proof}

This completes the deterministic and random spherical least-squares analysis. We next restrict to the order-zero case and relate the spherical construction to affine networks on bounded domains.

\section{Affine-network interpretation on bounded domains: the order-zero case}
\label{sec_bounded_domain}

This section is restricted to the order-zero case $\beta=0$, for which $\mathfrak L_0=I$ and the equation reduces to $u=f$. The construction below is therefore an affine-network consequence of the spherical function-approximation result, rather than an extension of the positive-order PDE theory to bounded domains. For $\beta>0$, applying $\mathfrak L_\beta$ to a global spherical extension and then restricting to a cap produces an extension-dependent operator that does not, in general, coincide with a prescribed elliptic operator on $\Omega$. Moreover, the present formulation contains neither boundary residuals nor boundary conditions. To match the standard function-approximation notation used in this section and in the numerical experiments, we denote the order-zero target and its approximant by $f$ and $f_{n,m}$; here $u=f$.

We now relate the order-zero spherical formulation to the usual Euclidean formulation of shallow ReLU$^k$ networks on a bounded Lipschitz domain $\Omega\subset\RR^d$. The numerical experiments in Section~\ref{sec:numerical_experiments} show that the discrete least-squares approximation on the sphere follows the convergence behavior predicted by the $\beta=0$ specialization of Theorem~\ref{thm:leastsquare}. In contrast, experiments with direct affine ReLU$^k$ least-squares approximation on Euclidean domains suggest that a direct replacement of $\SS^d$ by $\Omega$ does not preserve the same observed behavior. This motivates the following question: how can the spherical approximation theory be related to standard affine ReLU$^k$ networks on bounded domains?

The discrete least-squares theory developed in Section~\ref{sec_mainresult_ls} is intrinsically spherical. Its proof uses the harmonic structure of $\SS^d$, including the spherical harmonic decomposition, the ultraspherical expansion of the zonal activation $\sigma_k(\theta\cdot\eta)$, and the parity condition
\[
    F(\eta)=(-1)^{k+1}F(-\eta).
\]
These structures are not available on a general bounded domain. Therefore, even the order-zero specialization of Theorem~\ref{thm:leastsquare} cannot be transferred to $\Omega$ by simply replacing $\SS^d$ with $\Omega$, and the present work does not prove stability of least squares formed from samples in $\Omega$. We record instead an indirect construction: extend the target to the sphere, apply the $\beta=0$ spherical theorem there, and restrict the induced affine network to $\Omega$.

The point of this section is to justify the following passage:
a spherical linearized network
\[
    \eta\mapsto
    \sum_{j=1}^n a_j\sigma_k(\theta_j^*\cdot\eta),
    \qquad \theta_j^*\in\SS^d,
\]
when restricted to a suitable spherical cap, induces the ordinary affine
ReLU$^k$ network
\[
    x\mapsto
    \sum_{j=1}^n a_j\sigma_k(w_j^*\cdot x+b_j^*),
    \qquad x\in\Omega,
\]
where
\[
    \theta_j^*=\binom{w_j^*}{b_j^*}\in\SS^d .
\]
Thus the least-squares approximant on $\Omega$ will be constructed by lifting
the target function to the sphere, applying the spherical discrete
least-squares theorem, and then pulling the approximant back to $\Omega$.

After a translation and dilation of the physical domain, we may assume
without loss of generality that
\[
    \overline\Omega\subset \BB^d
    :=
    \{x\in\RR^d: |x|<1\}.
\]
We identify $\BB^d$ with the spherical cap
\[
    G
    :=
    \left\{
        \eta\in\SS^d:
        \eta_{d+1}> \frac1{\sqrt2}
    \right\}.
\]
The construction consists of three steps.  First, extend the target
function from $\Omega$ to $\BB^d$.  Second, lift the extended function from
$\BB^d$ to the cap $G$ by the homogeneous change of variables associated
with ReLU$^k$.  Third, extend the lifted function from $G$ to the whole
sphere so that the parity condition required in Section~\ref{sec_mainresult_ls}
is satisfied. Once this is done, the $\beta=0$ specialization of Theorem~\ref{thm:leastsquare} applies
directly on $\SS^d$, and the resulting estimate is pulled back to
$\Omega$.

\subsection{Extension to the sphere}

Let
\[
G=\left\{\eta\in\mathbb S^d:
\eta_{d+1}>\frac1{\sqrt2}\right\}
\]
be the spherical cap corresponding to $\BB^d$.
For $x\in \BB^d$, define
\[
    \widetilde x=\binom{x}{1} .
\]
For $\eta\in G$, write
\[
    \bar\eta=(\eta_1,\ldots,\eta_d)^T .
\]

Following \cite{bach2017breaking,liu2025integral}, introduce the
homogeneous transformations
\begin{equation}\label{eq:SkTk}
\begin{aligned}
(S_k g)(x)
    &=|\widetilde x|^k
      g\left(\frac{\widetilde x}{|\widetilde x|}\right),
      \qquad x\in \BB^d,\\
(T_k f)(\eta)
    &=\eta_{d+1}^{\,k}
      f\left(\frac{\bar\eta}{\eta_{d+1}}\right),
      \qquad \eta\in G .
\end{aligned}
\end{equation}

The following properties are standard consequences of the homogeneity of
ReLU$^k$.

\begin{lemma}\label{lem:SkTk}
For $r\ge0$ and $1\le p\le\infty$,
\[
    \|T_k f\|_{\mathcal W^{r,p}(G)}
    \eqsim
    \|f\|_{\mathcal W^{r,p}(\BB^d)} ,
\]
and
\[
    S_kT_kf=f\quad\text{on }\BB^d;\qquad T_kS_kg=g\quad\text{on }G.
\]
Moreover,
\[
S_k\big(\sigma_k(\theta\cdot\eta)\big)(x)
=
\sigma_k(\theta\cdot\widetilde x).
\]
\end{lemma}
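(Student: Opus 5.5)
The plan is to treat the three claims separately, starting with the two algebraic identities and then the norm equivalence, which is where the real work lies.

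\emph{The identities.} Let $\Phi:\BB^d\to G$ be $\Phi(x)=\widetilde x/|\widetilde x|$. Its inverse is $\Phi^{-1}(\eta)=\bar\eta/\eta_{d+1}$; indeed $|x|<1$ exactly when $\eta_{d+1}=(1+|x|^2)^{-1/2}>1/\sqrt2$.

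For $\eta=\Phi(x)$ we have $\eta_{d+1}=|\widetilde x|^{-1}$. Hence
\[
(S_kT_kf)(x)=|\widetilde x|^k\,|\widetilde x|^{-k}f(x)=f(x).
\]
Conversely, $|\widetilde{x}|=\eta_{d+1}^{-1}$ when $x=\bar\eta/\eta_{d+1}$. This gives $T_kS_kg=g$ on $G$.

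For the ReLU$^k$ identity, positive homogeneity $\sigma_k(\lambda t)=\lambda^k\sigma_k(t)$ for $\lambda>0$ gives
\[
|\widetilde x|^k\sigma_k\big(\theta\cdot\widetilde x/|\widetilde x|\big)=\sigma_k(\theta\cdot\widetilde x).
\]

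\emph{The norm equivalence.} The plan is to write
\[
T_kf=\omega\cdot(f\circ\Phi^{-1}),\qquad \omega(\eta)=\eta_{d+1}^k.
\]
Then $\Phi^{-1}$ is a $\mathcal C^\infty$ diffeomorphism from a neighbourhood of $\overline G$ onto a neighbourhood of $\overline{\BB^d}$, because $\eta_{d+1}\ge 1/\sqrt2$ stays bounded away from $0$. In the same way, $\omega$ and $1/\omega$ are smooth and bounded with all derivatives on $\overline G$.

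\emph{Integer $r$.} Here the Sobolev norm on $G$ can be expressed through a fixed chart, for instance orthogonal projection onto the disc as in the proof of Lemma~\ref{lem:ext_cell_sobolev}. The chain rule and Leibniz rule, together with uniform bounds on the Jacobian and its inverse, give $\|T_kf\|_{\mathcal W^{r,p}(G)}\lesssim\|f\|_{\mathcal W^{r,p}(\BB^d)}$. The reverse bound follows by applying the same argument to $S_k$, using $S_kT_k=I$.

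\emph{Non-integer $r$.} Interpolation (real or complex method, compatible with the definition on Lipschitz domains) transfers the bounded operators $T_k$ and $S_k$ between the integer-order spaces to the fractional ones. Alternatively, one can use the Slobodeckij seminorm directly, where bi-Lipschitz changes of variables and smooth multipliers are bounded.

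The main obstacle is not the computation but the bookkeeping of definitions. The spherical spaces were defined globally via $(-\Delta)^{r/2}$, so $\mathcal W^{r,p}(G)$ has to be read as a restriction or chart-based space. I would adopt the chart definition, which is equivalent with constants depending only on the fixed cap $G$ and on $k,r,p$, and then note that diffeomorphism invariance is standard for that definition.
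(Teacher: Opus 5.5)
Your proposal is correct and follows the same route as the paper, which only invokes the smooth diffeomorphism between $G$ and $\BB^d$ with standard Sobolev change-of-variables estimates for the norm equivalence, and the positive homogeneity of $\sigma_k$ for the ReLU$^k$ identity. You also check $S_kT_k=I$ and $T_kS_k=I$ explicitly via $\eta_{d+1}=|\widetilde x|^{-1}$, which the paper leaves implicit, and you split integer from fractional $r$; the one care point is that the fractional step (interpolation versus Slobodeckij, which give different spaces for $p\neq2$) must match whichever definition of $\mathcal W^{r,p}(G)$ is adopted, as you already note.
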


The first equivalence in Lemma~\ref{lem:SkTk} follows from the smooth diffeomorphism between the cap $G$ and the ball $\BB^d$ and standard Sobolev transformation estimates; see, for example, \cite{adams2003sobolev}. The last identity follows from the positive homogeneity of $\sigma_k$.

To apply Theorem~\ref{thm:leastsquare}, the lifted function on the sphere must satisfy the parity condition
\[
\mathcal{E}(f)(\eta)=(-1)^{k+1}\mathcal{E}(f)(-\eta).
\]
We first extend the target function from $\Omega$ to $\BB^d$. Let
\[
\mathcal E_1:\mathcal H^r(\Omega)\to\mathcal H^r(\BB^d)
\]
be a Sobolev extension operator. We recall that such an operator is constructive under standard geometric assumptions on the boundary. In particular, if $\Omega$ is a bounded Lipschitz domain, Stein's extension theorem gives a bounded linear extension operator preserving Sobolev regularity; equivalently, one may first extend to $\RR^d$ and then restrict to $\BB^d$. The construction is local: near each boundary point, one represents $\Omega$ in a boundary chart as a special Lipschitz domain, flattens the boundary, extends across the flattened boundary by a reflected averaging formula with suitable moment conditions, and finally patches the local extensions by a smooth partition of unity. Thus the extension is not an abstract existence result, although its direct numerical realization requires the boundary atlas and the corresponding partition of unity. In smoother geometries, for example when $\partial\Omega$ is $\mathcal{C}^2$, the same construction can be expressed more concretely by local normal coordinates or local reflection across the boundary. See \cite{stein1970singular,adams2003sobolev,burenkov1998sobolev}. We use the same notation for the corresponding bounded extension on the $\mathcal W^{r,p}$ scale, including $p=\infty$; in the low-regularity branch it is chosen to preserve the continuous representative assumed in Theorem~\ref{thm:leastsquareOmega}.

We next lift the extended function to the spherical cap. Let $\SS^d_+=\{\eta\in\SS^d:\eta_{d+1}>0\}$. Since $G$ has smooth boundary inside $\SS^d_+$, there exists a bounded extension operator
\[
\mathcal E_2:\mathcal H^r(G)\to\mathcal H^r(\SS^d_+).
\]
This operator can again be constructed in local coordinate charts by the same localization--reflection--patching procedure. We choose the same construction bounded on the $\mathcal W^{r,p}$ scale and preserving the continuous representative in the low-regularity branch. Choose a smooth cutoff $\chi\in \mathcal{C}^\infty(\SS^d_+)$ such that $\chi=1$ on $\overline G$ and $\chi=0$ for $0<\eta_{d+1}\le1/2$, and define $\mathcal Zg=\chi g$. Then $\mathcal Z\mathcal E_2T_k\mathcal E_1(f)$ agrees with $T_k\mathcal E_1(f)$ on $G$ and vanishes in a neighborhood of the equator.

For simplicity, we denote the above spherical extension by
\begin{equation}\label{eq:spherical_extension}
\mathcal{E}(f)(\eta)=\begin{cases}
\mathcal Z\mathcal E_2T_k\mathcal E_1(f)(\eta),&\eta_{d+1}\ge0,\\[0.3em]
(-1)^{k+1}\mathcal Z\mathcal E_2T_k\mathcal E_1(f)(-\eta),&\eta_{d+1}<0.
\end{cases}
\end{equation}
As the upper-hemisphere function vanishes in a neighborhood of the equator, the reflected definition introduces no loss of Sobolev regularity across the equator. By construction, $\mathcal{E}(f)$ satisfies
\[
\mathcal{E}(f)(\eta)=(-1)^{k+1}\mathcal{E}(f)(-\eta).
\]
Moreover, by the boundedness of $\mathcal E_1$, $\mathcal E_2$, the cutoff operator $\mathcal Z$, and the norm equivalence in Lemma~\ref{lem:SkTk},
\begin{equation}\label{eq:extension_regular}
\|\mathcal{E}(f)\|_{\mathcal H^r(\SS^d)}\lesssim\|f\|_{\mathcal H^r(\Omega)}.
\end{equation}
Similarly, for the corresponding $\mathcal{W}^{r,p}$ scale used in the statement,
\[
\|\mathcal{E}(f)\|_{\mathcal W^{r,p}(\SS^d)}\lesssim\|f\|_{\mathcal W^{r,p}(\Omega)}.
\]

\subsection{An indirect spherical-extension approximant}

We now transfer the spherical least-squares approximation result to
$\Omega$. Let $n\in\NN$ and
\[
\Theta_n=\{\theta_{j}^*\}_{j=1}^n\subset\mathbb S^d,\qquad n=1,2,\dots.
\]
For fixed $n$, we write $\theta_j^*$ instead of $\theta_{j,n}^*$.
Given spherical sampling points
$\{\eta_i^*\}_{i=1}^m\subset\mathbb S^d$,
choose $\mathcal{E}(f)_{n,m}$ and its coefficient vector $a=(a_1,\ldots,a_n)^\top$ so that
\begin{equation*}
    \mathcal{E}(f)_{n,m}\in\arg\min\limits_{g\in L_n^k(\Theta_n)}\frac{1}{m}\sum_{i=1}^m\left(
\mathcal{E}(f)(\eta_i^*)
-
g(\eta_i^*)
\right)^2,
\end{equation*}
in other words,
\begin{equation*}
a\in
\arg\min_{c\in\mathbb R^n}
\frac1m
\sum_{i=1}^m
\left(
\mathcal{E}(f)(\eta_i^*)
-
\sum_{j=1}^n
c_j\sigma_k(\theta_{j,n}^*\cdot\eta_i^*)
\right)^2 .
\end{equation*}
Now the corresponding approximant on $\Omega$ is obtained by applying the homogeneous transformation $S_k$:
\begin{equation}\label{eqn_dls_domain}
    f_{n,m}(x)=S_k(\mathcal E(f)_{n,m})(x)
=\sul_{j=1}^na_j\sigma_k(w_{j,n}^*\cdot x+b_{j,n}^*),\quad x\in\Omega.
\end{equation}

\begin{theorem}\label{thm:leastsquareOmega}
    Let $d,n\geq2$, $k\in\NN_0$, $k> \frac{d-1}{2}$, $0<r\leq\frac{d+2k+1}{2}$, $0\leq s<\min\{r,k+\frac{1}{2}\}$, $\overline\Omega\subset\BB^d$ be a Lipschitz domain, and $\Theta_n=\displaystyle\Bigg\{\binom{w_{j,n}^*}{b_{j,n}^*}\Bigg\}_{j=1}^n=\{\theta_{j,n}^*\}_{j=1}^n\subset\SS^d$. Then there exists a constant $C_2$ such that for any $m\geq C_2\underline{h}^{-d}$ and any quasi-uniform set $\{\eta_i^*\}_{i=1}^m$, the order-zero approximant \eqref{eqn_dls_domain} satisfies
    \begin{equation}\label{eqn_main_ls_determ_Omega}
    \begin{split}
        %&\|f-f_{n,m}\|_{\ell^2(X)}\lesssim n^{-\frac{d+2k+1}{2d}}\|f\|_{\mathcal{W}^{\frac{d+2k+1}{2},\infty}(\SS^d)},\\
        &\|f-f_{n,m}\|_{\mathcal{H}^s(\Omega)}\lesssim \underline h^{-s}h^{r}\left\{\begin{array}{ll}
            \|f\|_{\mathcal{W}^{r,p}(\Omega)}, &\qquad \frac dp<r\leq\frac{d}{2},  \\
            \|f\|_{\mathcal{H}^r(\Omega)}, &\qquad \frac{d}{2}<r\leq\frac{d+2k+1}{2},
        \end{array}\right.
    \end{split}
    \end{equation}
    where the corresponding constant is independent of $n,m$, $\Theta_n$, and $\{\eta_i^*\}_{i=1}^m$. In particular, if $\Theta=\{\Theta_n\}_{n\ge1}$ is an antipodally quasi-uniform sequence and $h=h_n$ corresponds to its $n$-th set, then with $m=\lceil C_2\underline h^{-d}\rceil$
    \begin{equation}
    \|f-f_{n,m}\|_{\mathcal{H}^s(\Omega)}\lesssim n^{-\frac{r-s}{d}}\left\{\begin{array}{ll}
            \|f\|_{\mathcal{W}^{r,p}(\Omega)}, &\qquad \frac dp<r\leq\frac{d}{2},  \\
            \|f\|_{\mathcal{H}^r(\Omega)}, &\qquad \frac{d}{2}<r\leq\frac{d+2k+1}{2},
        \end{array}\right.
    \end{equation}
\end{theorem}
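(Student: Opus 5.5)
The plan is to reduce everything to the $\beta=0$ case of Theorem~\ref{thm:leastsquare} applied to the spherical extension $F:=\mathcal E(f)$, and then pull the result back to $\Omega$ through $S_k$.

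\textbf{Step 1: the lifted problem is exactly covered by Theorem~\ref{thm:leastsquare}.} By \eqref{eq:spherical_extension}, $F$ satisfies the parity \eqref{eqn:f_evenodd_relu}. By \eqref{eq:extension_regular} and its $\mathcal W^{r,p}$ analogue, $F\in\mathcal H^r(\SS^d)$ (respectively $\mathcal W^{r,p}(\SS^d)$) with norm $\lesssim\|f\|_{\mathcal H^r(\Omega)}$ (respectively $\lesssim\|f\|_{\mathcal W^{r,p}(\Omega)}$). The discrete problem defining $\mathcal E(f)_{n,m}$ is precisely \eqref{eqn_def_collocation} with $\mathfrak L_0=I$ and data $F$. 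With $\beta=0$, the hypotheses $k>\frac{d-1}{2}$, $r\le\frac{d+2k+1}{2}$ and $s<\min\{r,k+\frac12\}$ are those of Theorem~\ref{thm:leastsquare}. For $m\ge C_2\underline h^{-d}$ and quasi-uniform $X$ it therefore gives
\[
\|F-\mathcal E(f)_{n,m}\|_{\mathcal H^s(\SS^d)}\lesssim\underline h^{-s}h^r\|F\|_{\ast},
\]
where $\ast$ denotes the appropriate $\mathcal W^{r,p}$ or $\mathcal H^r$ norm. In the low-regularity branch, point evaluation of $F$ needs a continuous representative. This holds because $r>d/p$ gives the embedding into $\mathcal C$, and the extensions were chosen to preserve continuity.

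\textbf{Step 2: pull back to $\Omega$.} By Lemma~\ref{lem:SkTk}, the positive homogeneity of $\sigma_k$ gives $S_k(\mathcal E(f)_{n,m})=f_{n,m}$ on $\BB^d$. It also gives $S_kF=S_kT_k\mathcal E_1 f=\mathcal E_1 f$ on $\BB^d$, since $F=T_k\mathcal E_1 f$ on $G$ (because $\chi=1$ on $\overline G$ and $\mathcal E_2$ extends). Since $\mathcal E_1f=f$ on $\Omega$,
\[
f-f_{n,m}=S_k\big(F-\mathcal E(f)_{n,m}\big)\quad\text{on }\Omega .
\]
Write $e:=F-\mathcal E(f)_{n,m}$. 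Applying the norm equivalence of Lemma~\ref{lem:SkTk} to $S_k e$ (using $e=T_kS_ke$ on $G$) yields
\[
\|f-f_{n,m}\|_{\mathcal H^s(\Omega)}\le\|S_ke\|_{\mathcal H^s(\BB^d)}\eqsim\|e\|_{\mathcal H^s(G)}\le\|e\|_{\mathcal H^s(\SS^d)}.
\]
Restriction is bounded for the full norm, including fractional $s$ via the restriction definition of $\mathcal H^s(G)$. Combining this with Step 1 gives \eqref{eqn_main_ls_determ_Omega}. The quasi-uniform corollary follows from $h\eqsim\underline h\eqsim n^{-1/d}$.

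\textbf{Main obstacle.} The argument is mostly bookkeeping. The delicate point is the regularity transfer: the constants in $\|\mathcal E(f)\|\lesssim\|f\|$ must hold on the full range $0<r\le\frac{d+2k+1}{2}$, including noninteger $r$ and the $\mathcal W^{r,p}$ scale. The Stein extension $\mathcal E_1$ is universal, so I would rely on it for all orders at once. The equivalence $\|T_kg\|_{\mathcal W^{r,p}(G)}\eqsim\|g\|_{\mathcal W^{r,p}(\BB^d)}$ follows from the smooth diffeomorphism $\eta\mapsto\bar\eta/\eta_{d+1}$ together with multiplication by the smooth positive weight $\eta_{d+1}^k$ on $\overline G$. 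Finally, the reflection across the equator is harmless because the upper-hemisphere function vanishes near $\eta_{d+1}=0$. I would state these as the ingredients justifying \eqref{eq:extension_regular} and then take that bound as given.
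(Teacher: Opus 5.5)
Your proposal is correct and follows the same route as the paper. You apply the $\beta=0$ case of Theorem~\ref{thm:leastsquare} to the parity-adapted extension $\mathcal E(f)$, bound $\|\mathcal E(f)\|$ via \eqref{eq:extension_regular}, and pull back through $S_k$ using Lemma~\ref{lem:SkTk} and restriction from $\BB^d$ to $\Omega$. Your explicit verification that $S_k\mathcal E(f)=\mathcal E_1 f$ on $\BB^d$ (since $\chi=1$ on $\overline G$), and hence $f-f_{n,m}=S_k(\mathcal E(f)-\mathcal E(f)_{n,m})$ on $\Omega$, is correct and fills in a step the paper leaves implicit.
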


\begin{proof}
By the $\beta=0$ specialization of Theorem~\ref{thm:leastsquare} applied on $\mathbb S^d$,
\[
\left\|
\mathcal{E}(f)-\mathcal{E}(f)_{n,m}
\right\|_{\mathcal H^s(\mathbb S^d)}\lesssim \underline h^{-s}h^r
\begin{cases}
\|\mathcal{E}(f)\|_{\mathcal W^{r,p}(\mathbb S^d)},\quad \frac dp<r\leq\frac{d}{2}\\
\|\mathcal{E}(f)\|_{\mathcal H^r(\mathbb S^d)},\quad\frac{d}{2}<r\leq\frac{d+2k+1}{2}.
\end{cases}
\]
Using \eqref{eq:extension_regular} gives the desired bound on the
spherical cap. Finally, applying $S_k$ and using Lemma~\ref{lem:SkTk},
\[
S_k
\left(
\sigma_k(\theta_j^*\cdot\circ)
\right)
=
\sigma_k(w_j^*\cdot x+b_j^*),
\]
which gives the ReLU$^k$ network representation on $\Omega$. The boundedness of $S_k$ on the cap, followed by restriction from $\BB^d$ to $\Omega$, completes the proof.
\end{proof}

Theorem~\ref{thm:leastsquareOmega} concerns the specific order-zero approximant obtained from spherical samples of the chosen extension $\mathcal E(f)$. It is therefore an approximation consequence of the spherical theorem, not a direct general-domain collocation theorem. For $\beta>0$, a bounded-domain PDE theory would additionally require a prescribed physical-domain operator, boundary conditions and boundary residuals, and sampling stability for the resulting interior and boundary trial spaces. These questions remain open.

\section{Numerical examples}\label{sec:numerical_experiments}
The purpose of this section is not to establish new empirical performance guarantees, but to illustrate the approximation behavior predicted by the theoretical results.

\subsection{Continuous least square examples}
We examine the deterministic least-squares method without multipliers ($\mathfrak L_\beta=I$) on \(\mathbb S^d\), \(d=2,3,4,5\), using activation orders \(k=1,2,2,3\), respectively. These choices all satisfy the strict hypothesis \(k>\frac{d-1}{2}\) in Theorem~\ref{thm:leastsquare}. The smooth parity-compatible targets are
\[
\begin{aligned}
 f^{(2)}(x)&=x_1x_2+\tfrac12(x_3^2-\tfrac13),&
 f^{(3)}(x)&=x_1x_2x_3,\\
 f^{(4)}(x)&=x_1x_2x_3,&
 f^{(5)}(x)&=x_1x_2x_3x_4 .
\end{aligned}
\]
Thus \(f^{(d)}(-x)=(-1)^{k+1}f^{(d)}(x)\) in every case. At the largest regularity allowed by the theorem and with \(s=0\), the reference exponents are \(5/4\), \(4/3\), \(9/8\), and \(6/5\).

For each \(n=64,128,256,512,1024\), the parameter set is selected by a max--min construction with explicit antipodal exclusion. The deterministic collocation set contains \(m=8n\) points and is selected independently by max--min sampling from a uniform sphere-point pool. Based on the (antipodally) quasi-uniformity assumptions of the theorem, the empirical mesh ratios remain bounded over the tested range: the largest estimated parameter ratio \( h_n/\underline h_n\) is \(3.368\), and the largest estimated collocation points ratio is \(1.722\).

The feature matrix is evaluated as
\[
 A_{ij}=\big(\max\{\theta_j\cdot\eta_i,0\}\big)^k,
\]
and the unregularized least-squares problem is solved by an SVD-based method. Relative \(\mathcal L^2\) errors are evaluated on independent scrambled Sobol point sets. We separately verified the vectorized powers \(k=1,2,3,4\) against scalar evaluation and recovered exactly generated ReLU, ReLU$^2$, and ReLU$^3$ networks on independent test points. These checks ensure that the higher-order activations required in dimensions \(d\ge3\) are implemented as ReLU$^k$, rather than as repeated ReLU layers or ordinary ReLU features.

Table~\ref{tab:least_squares_large_scale} reports the last three refinement levels. The error decreases monotonically in all four dimensions through \(n=1024\).

\begin{table}[H]
\centering
\small
\begin{tabular}{@{}rcccc@{}}
\toprule
\(n\) & \(\mathbb S^2\), ReLU & \(\mathbb S^3\), ReLU$^2$ &
\(\mathbb S^4\), ReLU$^2$ & \(\mathbb S^5\), ReLU$^3$\\
\midrule
256  & \(2.445\times10^{-3}\) & \(5.678\times10^{-3}\) & \(2.240\times10^{-2}\) & \(1.208\times10^{-1}\)\\
512  & \(9.583\times10^{-4}\) & \(2.094\times10^{-3}\) & \(8.818\times10^{-3}\) & \(3.590\times10^{-2}\)\\
1024 & \(4.259\times10^{-4}\) & \(8.671\times10^{-4}\) & \(3.845\times10^{-3}\) & \(1.202\times10^{-2}\)\\
\bottomrule
\end{tabular}
\caption{Relative \(\mathcal L^2\) errors for deterministic least squares with \(m=8n\).}
\label{tab:least_squares_large_scale}
\end{table}

Least-squares fits over \(n=256,512,1024\) give slopes \(-1.261\), \(-1.356\), \(-1.271\), and \(-1.664\) on \(\mathbb S^2\) through \(\mathbb S^5\), respectively. The first three agree closely with or exceed the reference rates over the tested range. The steeper ReLU$^3$ slope follows a visible pre-asymptotic transition and should not be interpreted as a sharper asymptotic result. All 20 design matrices have full column rank, the largest normalized normal-equation residual is \(6.39\times10^{-15}\), and independent \(2^{17}\)- and \(2^{18}\)-point error estimates differ by at most \(2.72\times10^{-3}\) relatively. These diagnostics verify the numerical solves and show that integration error is smaller than the changes across refinement levels.

\begin{figure}[H]
    \centering
    \includegraphics[width=0.82\textwidth]{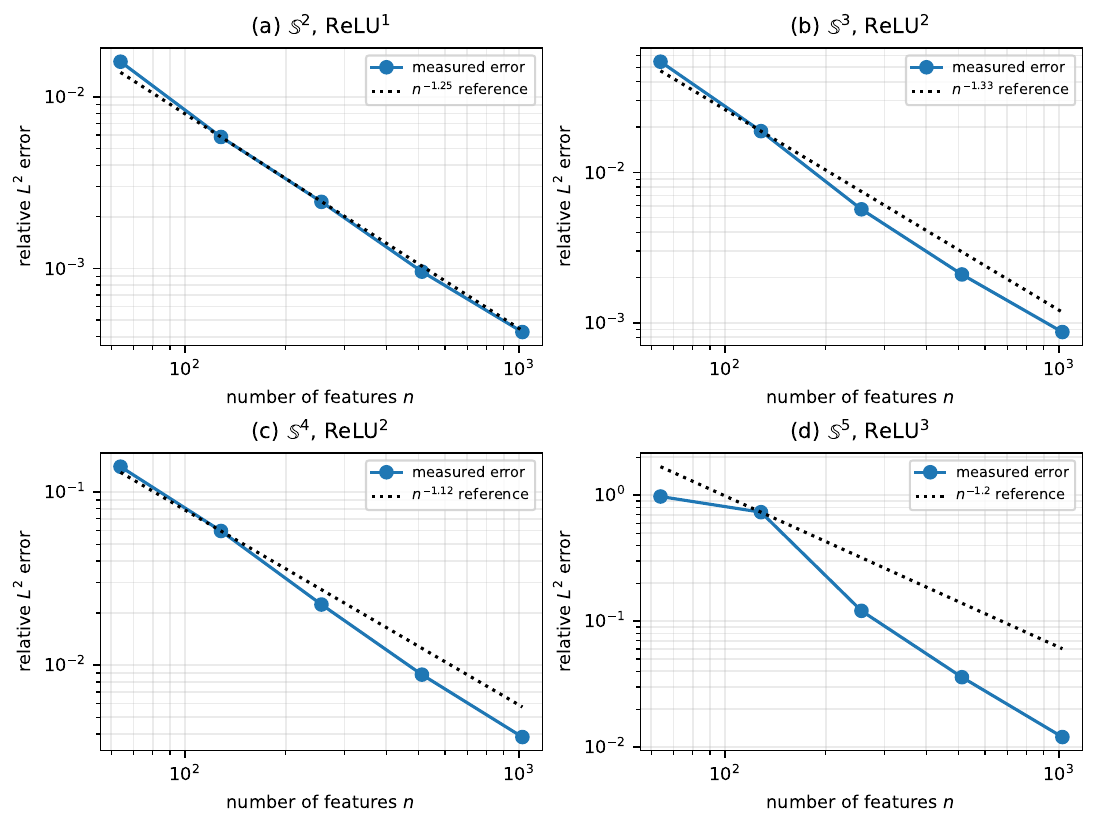}
    \caption{Deterministic least-squares errors on \(\mathbb S^2\)--\(\mathbb S^5\) with \(m=8n\). Each activation order satisfies \(k>\frac{d-1}{2}\); dotted lines show the reference exponents from Theorem~\ref{thm:leastsquare}.}
    \label{fig:least_squares_large_scale}
\end{figure}

The results are finite-range numerical evidence for the spherical theory, not an empirical proof of its asymptotic assumptions. Additional sampling parameters, random-collocation and oversampling studies, implementation checks, and solver diagnostics are recorded separately as reproducibility material.

\subsection{Positive-order residual examples}

We test the positive-order equation
\[
    (I-\Delta)u=f
\]
on $\mathbb S^2$ and $\mathbb S^3$. Thus $\beta=2$. For
$t=\theta\cdot x$ and $k\ge3$, the zonal Laplace--Beltrami identity gives
\begin{equation}\label{eq:pde_residual_feature}
\begin{split}
(I-\Delta)\sigma_k(\theta\cdot x)
={}&\bigl[1+k(k+d-1)\bigr](t_+)^k
-k(k-1)(t_+)^{k-2}.
\end{split}
\end{equation}
Consequently, the residual design matrix can be evaluated analytically,
without numerical differentiation. We verified
\eqref{eq:pde_residual_feature} independently both from the zonal derivative
formula
\[
\Delta\phi(t)=(1-t^2)\phi''(t)-dt\phi'(t)
\]
and by centered second differences along an orthonormal basis of geodesics
through randomly selected sphere points.

The two cases are
\begin{center}
\begin{tabular}{@{}cccc@{}}
\toprule
domain & \(k\) & \(u\) & \(f\)\\
\midrule
\(\mathbb S^2\) & \(3\) &
\(x_1x_2+\frac12(x_3^2-\frac13)\) & \(7u\)\\[0.2em]
\(\mathbb S^3\) & \(4\) & \(x_1x_2x_3\) & \(16u\)\\
\bottomrule
\end{tabular}
\end{center}
The solutions are spherical harmonics of degrees $2$ and $3$, respectively,
so that the forcing factors follow from
$-\Delta Y_{m,\ell}=m(m+d-1)Y_{m,\ell}$. Their parity agrees with
$(-1)^{k+1}$, and the strict pointwise-residual condition
$k>\beta+(d-1)/2$ holds in both cases. At $s=0$ and the largest forcing
regularity admitted by Theorem~\ref{thm:leastsquare}, the reference
exponents are $5/4$ on $\mathbb S^2$ and $4/3$ on $\mathbb S^3$.

For $n=64,128,256,512,1024$, we use independently selected max--min
parameter and collocation sets with explicit antipodal exclusion for the
parameters and $m=8n$. The residual least-squares problems use
\eqref{eq:pde_residual_feature}. Errors are evaluated on independent
scrambled Sobol sets with $2^{17}$ and $2^{18}$ points. Define the
operator-induced energy norm
\[
    \|v\|_{I-\Delta}:=\|(I-\Delta)v\|_{\mathcal L^2(\mathbb S^d)}.
\]
It is equivalent to the $\mathcal H^2$ norm by
\eqref{eqn:operator_equivalence}, but it is not identified with the
particular equivalent norm chosen in
\eqref{eqn:Sob_norm_Parseval}. The relative residual error in
Table~\ref{tab:least_squares_pde} is exactly the corresponding relative
$\|\cdot\|_{I-\Delta}$ solution error.

\begin{table}[H]
\centering
\small
\begin{tabular}{@{}rcccc@{}}
\toprule
&\multicolumn{2}{c}{$\mathbb S^2$, ReLU$^3$}
&\multicolumn{2}{c}{$\mathbb S^3$, ReLU$^4$}\\
\cmidrule(lr){2-3}\cmidrule(l){4-5}
$n$ & $E_{\rm res}$ & $E_{L^2}$
& $E_{\rm res}$ & $E_{L^2}$\\
\midrule
256  & $2.152\times10^{-3}$ & $1.306\times10^{-4}$
     & $4.014\times10^{-3}$ & $5.844\times10^{-4}$\\
512  & $8.324\times10^{-4}$ & $4.570\times10^{-5}$
     & $1.336\times10^{-3}$ & $1.304\times10^{-4}$\\
1024 & $3.688\times10^{-4}$ & $9.554\times10^{-6}$
     & $5.237\times10^{-4}$ & $6.944\times10^{-5}$\\
\midrule
fitted rate
     & \multicolumn{2}{c}{$-1.272$}
     & \multicolumn{2}{c}{$-1.469$}\\
reference rate
     & \multicolumn{2}{c}{$-1.250$}
     & \multicolumn{2}{c}{$-1.333$}\\
\bottomrule
\end{tabular}
\caption{Residual least-squares errors for $(I-\Delta)u=f$ with $m=8n$.
Here $E_{\rm res}$ and $E_{L^2}$ denote the relative residual and relative
solution $\mathcal L^2$ errors. The slopes are fitted over
$n=256,512,1024$.}
\label{tab:least_squares_pde}
\end{table}

All ten residual design matrices have full column rank. The largest
normalized normal-equation residual is $2.17\times10^{-15}$, the two
independent residual estimates differ by at most $2.44\times10^{-3}$
relatively, the largest estimated parameter ratio
$\widehat h/\underline h$ is $3.423$, and the largest estimated collocation
mesh ratio is $1.675$. These diagnostics verify the linear solves and show
that the observed changes across refinement levels are larger than the
independent integration discrepancy. The slopes are finite-range
observations, not a proof of asymptotic quasi-uniformity or of the theorem.

\begin{figure}[H]
    \centering
    \includegraphics[width=0.92\textwidth]{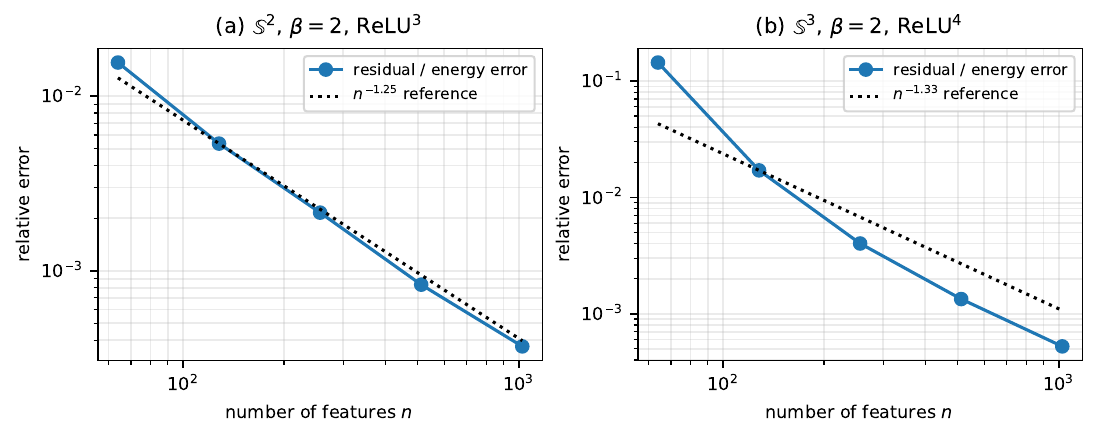}
    \caption{Relative residual, equivalently operator-induced energy, errors
    for $(I-\Delta)u=f$. Dotted lines show the deterministic reference
    exponents from Theorem~\ref{thm:leastsquare}.}
    \label{fig:least_squares_pde}
\end{figure}

\subsection{Examples on cubes}

We finally apply the same linear least-squares algorithm to ordinary affine networks on the hypercube $Q_d=[-1,1]^d$. This experiment is deliberately separated from the preceding validation: Theorem~\ref{thm:leastsquare} is a result on the sphere and provides no stability estimate or convergence rate for uniform samples in $Q_d$.

We consider $(d,k)=(2,1),(4,2),(8,4)$ and the smooth target
\[
    u_d(x)=d^{-1/2}\sum_{\ell=1}^d\sin(\pi x_\ell)
    +\frac14\prod_{\ell=1}^{\min\{4,d\}}x_\ell.
\]
The features are $\sigma_k(w_j\cdot x+b_j)$ with $(w_j,b_j)\in\mathbb S^d$. Parameters whose zero hyperplane does not intersect the central cube $[-1/2,1/2]^d$ are discarded, after which a max--min rule selects the feature directions. Collocation uses $m=8n$ scrambled Sobol points uniformly distributed in $Q_d$, and errors are evaluated on independent Sobol sets.

\begin{table}[H]
\centering
\small
\begin{tabular}{@{}rccc@{}}
\toprule
$n$ & $Q_2$, ReLU & $Q_4$, ReLU$^2$ & $Q_8$, ReLU$^4$\\
\midrule
128  & $3.664\times10^{-2}$ & $8.624\times10^{-2}$ & $7.251\times10^{-1}$\\
256  & $2.525\times10^{-2}$ & $5.008\times10^{-2}$ & $5.511\times10^{-1}$\\
512  & $1.967\times10^{-2}$ & $2.759\times10^{-2}$ & $2.952\times10^{-1}$\\
1024 & $1.572\times10^{-2}$ & $1.404\times10^{-2}$ & $1.426\times10^{-1}$\\
\midrule
fitted rate & $-0.402$ & $-0.872$ & $-0.794$\\
reference rate & $-1.250$ & $-1.125$ & $-1.0625$\\
\bottomrule
\end{tabular}
\caption{Relative $\mathcal L^2(Q_d)$ errors for affine ReLU$^k$ least squares with $m=8n$. The fitted rates use $n=128,256,512,1024$. The spherical reference rates are those supplied by Theorem~\ref{thm:leastsquare} for the same $(d,k)$ at $s=0$ and maximal admissible regularity; they are not theoretical rates for $Q_d$.}
\label{tab:least_squares_hypercube}
\end{table}

% The error decreases monotonically in all three cases. Least-squares fits of $\log\|u_d-u_{d,n,m}\|_{\mathcal L^2(Q_d)}$ against $\log n$ give the empirical rates $n^{-0.402}$, $n^{-0.872}$, and $n^{-0.794}$ for $d=2,4,8$, respectively. For comparison, on $\mathbb S^d$ Theorem~\ref{thm:leastsquare}, with $s=0$ and $r=\frac{d+2k+1}{2}$, gives the reference rate
% \[
%     n^{-\frac{d+2k+1}{2d}},
% \]
% namely $n^{-5/4}$, $n^{-9/8}$, and $n^{-17/16}$ for the same three pairs $(d,k)$. The cube slopes are therefore slower over the tested range. This comparison is only descriptive: the domains, sampling measures, and target classes differ, and the spherical proof does not apply to $Q_d$. All 12 design matrices have full column rank, the largest normalized normal-equation residual is $1.26\times10^{-14}$, and the two independent error estimates differ by at most $2.56\times10^{-3}$ relatively. Thus the computation is numerically verified, but the fitted exponents neither supply the missing general-domain sampling inequality nor constitute universal cube rates.

\section{Conclusion}

We established a discrete residual least-squares theory for positive elliptic spectral equations on the sphere using linearized ReLU$^k$ network trial spaces. The deterministic result gives the geometric error bound $\underline h^{-s}h^r$ for quasi-uniform collocation with $m\gtrsim\underline h^{-d}$. For antipodally quasi-uniform network parameter sets, this becomes the optimal rate $n^{-\frac{r-s}{d}}$ and agrees with the continuous approximation order. Through the elliptic norm equivalence, the residual estimate directly yields an error estimate for the approximate PDE solution. The order-zero choice $\mathfrak L_0=I$ recovers the ordinary discrete least-squares approximation problem.

The principal stability mechanism is the Bernstein inequality for linearized ReLU$^k$ spaces and its residual-space consequence for $\mathfrak L_\beta L_n^k$. The same framework gives a near-optimal high-probability estimate for i.i.d.\ uniform collocation points, while the original Bernstein inequality yields an inverse approximation theorem for the underlying network spaces.

The bounded-domain construction and the numerical experiments in the present paper remain restricted to the order-zero case. In particular, the spherical lifting does not identify $\mathfrak L_\beta$ with a differential operator on a bounded physical domain and does not account for boundary conditions. Developing a direct residual least-squares theory for bounded-domain PDEs, including boundary residuals, variable-coefficient operators, and sampling stability on the physical domain, remains an important direction for future work.

\bibliographystyle{abbrv}
	\bibliography{ref}
\end{document}